\newtheorem{theorem}{Theorem}[section]
\newtheorem{lemma}[theorem]{Lemma}
\newtheorem{fact}[theorem]{Fact}
\newtheorem{cor}[theorem]{Corollary}
\newtheorem{definition}[theorem]{Definition}
\newtheorem{conjecture}[theorem]{Conjecture}
\theoremstyle{plain}
\numberwithin{equation}{theorem}
\theoremstyle{remark}
\newtheorem{claim}[theorem]{Claim}
\newcommand{\Q}{{\mathbb Q}}
\newcommand{\Z}{{\mathbb Z}}
\newcommand{\cS}{{\mathcal S}}
\newcommand{\OO}{{\mathcal O}}
\newcommand{\id}{\operatorname{id}}
\newcommand{\Kbar}{\overline K}
\def\Frac{\operatorname{Frac}}
\newcommand{\Qbar}{\bar{\Q}}
\DeclareMathOperator{\Gal}{Gal}
\DeclareMathOperator{\GL}{GL}
\DeclareMathOperator{\N}{\mathbb{N}}
\DeclareMathOperator{\tor}{tor}
\DeclareMathOperator{\End}{End}
\newcommand{\isomto}{\overset{\sim}{\rightarrow}}
\newcommand{\bP}{{\mathbb P}}
\newcommand{\bA}{{\mathbb A}}
\newcommand{\lra}{\longrightarrow}
\newcommand{\cU}{\mathcal{U}}
\newcommand{\cL}{\mathcal{L}}
\begin{document}

\title{Density of orbits of endomorphisms of abelian varieties}

\author{Dragos Ghioca}
\author{Thomas Scanlon} 
%\author{Thomas J. Tucker}

\thanks{D.G. is partially supported by an NSERC grant. T.S. is partially supported by NSF Grant DMS-1363372.
This material is based upon work supported by the National Science Foundation under 
Grant No. 0932078 000 while the authors
were in residence at the Mathematical Sciences Research Institute in Berkeley, 
California, during the Spring 2014 semester}

\address{Department of Mathematics \\
University of British Columbia\\
Vancouver, BC V6T 1Z2\\
Canada}
\email{dghioca@math.ubc.ca}

\address{Mathematics Department \\ University of California Berkeley\\
  Evans Hall \\ Berkeley CA, 94720-3840
}
\email{scanlon@math.berkeley.edu}

%\address{Department of Mathematics\\
%University of Rochester\\
%Rochester, NY 14627\\
%USA}
%\email{ttucker@math.rochester.edu}

\begin{abstract} Let $A$ be an abelian variety defined over $\Qbar$, and let $\varphi$ be a 
 dominant endomorphism of $A$ as an algebraic variety. We prove that either there exists
  a non-constant rational fibration preserved by $\varphi$, or there exists a point 
  $x\in A(\Qbar)$ whose $\varphi$-orbit is Zariski dense in $A$. This provides a 
  positive answer for abelian varieties of a question raised by Medvedev and the 
  second author in \cite{medvedev-scanlon}. We prove also a stronger statement of this result in which $\varphi$ is replaced by any commutative finitely generated monoid of dominant endomorphisms of $A$.
  
\end{abstract}

\maketitle

%%%%%%%%%%%%%%%%%%%%%%%%%%%%%%%%%%%%%%%%%%%%%%%%%%%%%%%%%%%%%%%%%%%%
%%%%%%%%%%%%%%%%%%%%%%%%%%%%%%%%%%%%%%%%%%%%%%%%%%%%%%%%%%%%%%%%%%%%

\section{Introduction}
\label{intro section}

The following conjecture was raised in \cite[Conjecture~7.14]{medvedev-scanlon} (motivated by a conjecture of Zhang \cite{Zhang-lec} for polarizable endomorphisms of projective varieties).
\begin{conjecture}
\label{M-S conjecture}
Let $K$ be a field of characteristic $0$, let $\Kbar$ be an algebraic closure 
of $K$, let $X$ be an irreducible algebraic variety defined over $\Kbar$, 
and let $\varphi:X\lra X$ be a dominant rational self-map. We suppose there exists 
no positive dimensional algebraic variety $Y$ and dominant rational 
map $f:X\lra Y$ such that $f\circ \varphi = f$. Then there 
exists $x\in X(\Kbar)$ whose forward $\varphi$-orbit is Zariski dense in $X$.
\end{conjecture}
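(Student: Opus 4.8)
I would argue by contradiction, aiming to turn the failure of the conclusion into an invariant fibration. First reduce to an arithmetic setting: $X$, $\varphi$, and any hypothetical fibration are defined over a finitely generated extension of $\Q$, so by a standard spreading-out and specialization argument one may assume $K$ is a number field, $X$ is quasi-projective over $K$, and, after deleting the indeterminacy locus of $\varphi$ together with its forward iterates, $\varphi$ is a morphism on the relevant locus. Now assume that no $x \in X(\Kbar)$ has Zariski-dense orbit, so that $X = \bigcup_x \overline{\Orb_\varphi(x)}$ is a cover by proper $\varphi$-invariant closed subsets. One cannot finish by a Baire-category ``generic point'' argument, since $X(\oQ)$ is itself only countable and could be entirely absorbed by this cover; instead the goal is to extract from the cover a positive-dimensional algebraic family $\{V_t\}_{t\in T}$ of proper $\varphi$-periodic subvarieties passing through the generic point of $X$, and then to build from $T$ a dominant rational map $f : X \lra Y$ with $f \circ \varphi = f$.

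\textbf{Rigidity of orbit closures via $p$-adic interpolation.} The engine for controlling orbit closures is the Bell--Ghioca--Tucker method. Fixing a prime $\pp$ of good reduction and replacing $\varphi$ by an iterate, on each $\pp$-adic residue disc the orbit of a point $x$ is interpolated by $\Cp$-analytic arcs $\theta_i : \Zp \to X(\Cp)$ with $\theta_i(n) = \varphi^{Ni+j}(x)$ along the relevant residue class $j$. Since $\Zp$ is compact, the Zariski closure of the image of $\theta_i$ is cut out by the analytic functions that vanish on it, which forces strong rigidity: $\overline{\Orb_\varphi(x)}$ is itself $\varphi$-periodic, and as $x$ varies over a residue disc these closures sweep out only finitely many algebraic families of bounded complexity. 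Under the contradiction hypothesis, then, $X$ is covered, $\pp$-adically locally and hence (by spreading the bound) globally, by the periodic fibers of finitely many rational maps to lower-dimensional varieties.

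\textbf{Gluing the families — the main obstacle.} What remains is to glue the local families $\{V_t\}_{t\in T}$ of proper periodic subvarieties into one global invariant fibration. The naive map $x \mapsto (\text{the } V_t \text{ containing } x)$ is not yet $\varphi$-equivariant for a trivial action on the base: $\varphi$ permutes the $V_t$ and so induces monodromy on $T$, which (being of bounded complexity by the previous step) is finite, so one should quotient $T$ by it and check that the quotient map stays dominant and that the local pictures agree on overlaps. I expect this gluing to be the crux, and indeed this is precisely where the general conjecture remains open: absent a group structure, the periodic subvarieties through different points of $X$ need not assemble into an honest algebraic fibration rather than an unbounded, chaotic family. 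The case one can actually finish is $X = A$ an abelian variety, where $\varphi$ has the shape $x \mapsto \psi(x) + c$ with $\psi \in \End(A)$ an isogeny and $c \in A$; there the $\varphi$-periodic subvarieties are translates of $\psi$-periodic abelian subvarieties, the base $T$ is literally a quotient abelian variety, the monodromy is controlled by torsion, and the problem reduces to a tractable question about endomorphism rings of abelian varieties, namely that if every $\varphi$-orbit fails to be dense then $\varphi$ acts with only finite orbits on some positive-dimensional quotient of $A$, which then furnishes the invariant fibration. The extension to a commutative finitely generated monoid of endomorphisms would run this argument for the finitely many generators simultaneously and intersect the resulting fibrations, commutativity ensuring the common refinement is again invariant under the whole monoid.
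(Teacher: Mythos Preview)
The statement you are asked to prove is a \emph{conjecture}: the paper does not prove it in general, and you correctly flag that the gluing step ``is precisely where the general conjecture remains open.'' So there is no general proof in the paper to compare against; what the paper actually establishes is the special case $X=A$ an abelian variety (and its extension to commutative finitely generated monoids). Your write-up is therefore not a proof of the stated conjecture but a strategy sketch that retreats to that special case.

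For the abelian variety case your sketch diverges substantially from the paper's argument, and is too loose to stand on its own. You invoke $p$-adic interpolation \`a la Bell--Ghioca--Tucker to control orbit closures; the paper does not use $p$-adic analytic methods here at all. Instead it proceeds structurally: writing $\sigma=T_y\circ\tau$ with $\tau$ an isogeny, it passes to an iterate so that $1$ is the only root-of-unity eigenvalue of $\tau$, then splits $A=A_1+A_2$ where $\tau|_{A_1}-\id$ is an isogeny and $(\tau-\id)|_{A_2}$ is nilpotent. After a translation conjugation one may take $y\in A_2$. The dichotomy is then completely explicit: if the algebraic subgroup of $A_2$ generated by $(\tau-\id)(A_2)$ and $y$ is proper, the quotient by $A_1$ plus that subgroup is a nonconstant fibration with $f\circ\sigma=f$; otherwise one \emph{constructs} a specific point $x=x_1+x_2$ with dense orbit. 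The construction uses two arithmetic lemmas (producing $x_2$ so that the cyclic group generated by $(\tau-\id)(x_2)+y$ is dense in $A_2$, and producing $x_1\in A_1$ generic relative to a finitely generated $\End(A)$-module), and the verification that $\overline{\OO_\sigma(x)}=A$ rests on Faltings' theorem (Mordell--Lang), which forces the orbit closure to be a finite union of cosets and lets one kill any putative proper component.

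Your summary ``if every $\varphi$-orbit fails to be dense then $\varphi$ acts with only finite orbits on some positive-dimensional quotient'' is in the right spirit but imprecise: what the paper obtains is a quotient on which $\sigma$ acts as the \emph{identity}, and the quotient is identified concretely from the unipotent/translation data, not extracted from a family of periodic subvarieties. The ``periodic subvarieties through different points assemble into an algebraic family'' picture you describe is closer to the Amerik--Campana viewpoint, which (as the paper notes) only yields a countable union of bad loci and does not by itself produce a $\Kbar$-point with dense orbit. The paper's contribution is precisely to bypass that obstacle in the abelian case via the $A_1+A_2$ decomposition and Faltings' theorem, neither of which appears in your sketch.
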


We denote by $\OO_\varphi(x)$ the forward $\varphi$-orbit, 
i.e. the set of all $\varphi^n(x)$ for $n\ge 0$, where by $\varphi^n$ we 
denote the $n$-th compositional power of $\varphi$. 
Conjecture~\ref{M-S conjecture} was proven in \cite[Theorem~7.16]{medvedev-scanlon} 
in the special case $X=\bA^m$, and $\varphi:=(f_1,\dots, f_m)$ is given by the 
coordinatewise action of $m$ one-variable polynomials $f_i$. In this paper 
we prove Conjecture~\ref{M-S conjecture} when $X$ is an abelian variety. 
This is the fourth known case of Conjecture~\ref{M-S conjecture} 
(besides the case  proven by Medvedev and the second author 
in~\cite{medvedev-scanlon}, Amerik, Bogomolov and 
Rovinsky~\cite{Amerik-Bogomolov-Rovinsky} exploited the local dynamical 
behaviour of the map $\varphi$ to prove a special case of 
Conjecture~\ref{M-S conjecture} assuming there is a \emph{good} $p$-adic analytic 
parametrization for the orbit $\OO_\varphi(x)$, and recently, the case when $X$ is a surface was proven in \cite{BGT} using also $p$-adic methods).

\begin{theorem}
\label{abelian varieties}
Let $K$ be a field of characteristic $0$, and let $\Kbar$ be a fixed algebraic closure of $K$. 
Let $A$ be an abelian variety defined over $\Kbar$, and let $\sigma:A\lra A$ be a finite map
of agebraic varieties. Then the following statements are equivalent:
\begin{enumerate}
\item[(1)] there exists $x\in A(\Kbar)$ such that $\OO_\sigma(x)$ is Zariski dense in $A$.
\item[(2)]  there exists no non-constant rational map $f:A\lra \bP^1$ such 
that $f\circ \sigma =f$.
\end{enumerate}
\end{theorem}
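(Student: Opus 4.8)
The direction (1)$\Rightarrow$(2) is the easy one: if $f:A\lra\bP^1$ is non-constant with $f\circ\sigma=f$, then on the $\sigma$-stable dense open set where $f$ is a morphism and the relation holds, every orbit lies in a single fibre $f^{-1}(f(x))$, a proper closed subset of $A$, so no orbit can be Zariski dense. For the substantive implication (2)$\Rightarrow$(1) I would proceed as follows. First record the shape of $\sigma$: since any morphism of abelian varieties is a translation composed with a homomorphism, $\sigma=\tau_c\circ\psi$ with $\psi\in\End(A)$ and $c=\sigma(0)$, and $\psi$ is an isogeny because $\sigma$ is finite. Iterating, $\sigma^k(x)=\psi^k(x)+c_k$ with $c_k=\sum_{j=0}^{k-1}\psi^j(c)\in A(\Kbar)$.

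Next I would make an eigenvalue reduction using hypothesis (2). Let $A_{\mathrm{good}}\subseteq A$ be the $\psi$-stable abelian subvariety spanned by the generalized eigenspaces of $\psi$ (on $H_1(A)$) whose eigenvalue is $1$ or not a root of unity. On $A/A_{\mathrm{good}}$ the map $\psi$ has all eigenvalues roots of unity $\neq 1$, hence is a finite-order automorphism with $\psi-\id$ an isogeny; thus the induced $\bar\sigma$ on $A/A_{\mathrm{good}}$ has a fixed point and is conjugate to this automorphism, so if $A_{\mathrm{good}}\neq A$ the composite $A/A_{\mathrm{good}}\lra(A/A_{\mathrm{good}})/\langle\bar\sigma\rangle\lra\bP^1$ is $\bar\sigma$-invariant and pulls back to a non-constant $\sigma$-invariant map $A\lra\bP^1$, contradicting (2). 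So under (2) every eigenvalue of $\psi$ is $1$ or not a root of unity. Now let $A_{\mathrm o}\subseteq A$ be the $\psi$-stable abelian subvariety spanned by the non-root-of-unity generalized eigenspaces: then $\psi$ acts unipotently on $A/A_{\mathrm o}$, hence equals $\id$ there (as $\End(A/A_{\mathrm o})\otimes\Q$ is semisimple, it has no nonzero nilpotents), while $\psi|_{A_{\mathrm o}}^k-\id$ is an isogeny for every $k\ge 1$; in particular $\im(\psi^k-\id)=A_{\mathrm o}$ for all $k\ge 1$.

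The core step is to show the generic point has a dense orbit and then specialize. Suppose, for contradiction, that no $x\in A(\Kbar)$ has a Zariski-dense orbit; by the standard spreading-out argument (if every algebraic point had non-dense orbit, then, writing the orbit closures as a family over $A$, the generic point would too) the orbit of the generic point $\eta$ of $A$, over an algebraic closure of $\Kbar(A)$, is not dense, so $V:=\overline{\OO_\sigma(\eta)}\subsetneq A$. Then $V$ is $\sigma$-invariant, its positive-dimensional components are permuted by $\sigma$, and on the component $V^0$ containing $\eta$ some iterate $\sigma^k$ acts with a dense orbit. By Ueno's structure theorem $V^0$ is a torsor under the abelian subvariety $B:=(\{a\in A:a+V^0=V^0\})^{\circ}$, and $V^0/B\subseteq A/B$ is of general type once $\dim(V^0/B)>0$; from $\sigma^k(V^0)=V^0$ one gets $\psi^k(B)=B$, so $\sigma^k$ descends to a dominant rational self-map of $V^0/B$ with a dense orbit. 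Since a projective variety of general type has no dominant rational self-map of degree $>1$ and a finite group of birational automorphisms, such a self-map has finite order and only finite orbits — impossible unless $V^0/B$ is a point. Hence $V^0=\eta+B$ with $B\subsetneq A$ a proper abelian subvariety, defined over $\Kbar$ by rigidity. I expect this to be the main obstacle: extracting from ``the generic orbit closure is a proper subvariety'' the conclusion that its component through $\eta$ is a coset of a proper abelian subvariety rests on Ueno's theorem and the rigidity of general-type varieties, and one must check carefully that $B$ is $\psi$-stable and defined over the base field.

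Finally I would extract the contradiction. From $\sigma^k(\eta)\in\eta+B$ we get $(\psi^k-\id)(\eta)+c_k\in B$; as $\eta$ is generic and $\im(\psi^k-\id)=A_{\mathrm o}$, the point $(\psi^k-\id)(\eta)$ is the generic point of $A_{\mathrm o}$, so $A_{\mathrm o}\subseteq B-c_k$, and since $0\in A_{\mathrm o}$ this forces $c_k\in B$ and then $A_{\mathrm o}\subseteq B$. Thus $A_{\mathrm o}\subseteq B\subsetneq A$, so $A/A_{\mathrm o}$ is positive-dimensional; on it $\sigma$ induces the translation $\tau_{\bar c}$ (because $\psi=\id$ on $A/A_{\mathrm o}$), the image of $c_k$ there is $k\bar c$, and $k\bar c$ lies in the proper subvariety $B/A_{\mathrm o}$. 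Hence the identity component $E$ of the Zariski closure of $\langle\bar c\rangle$ is a proper abelian subvariety of $A/A_{\mathrm o}$, so $(A/A_{\mathrm o})/E$ is positive-dimensional; any non-constant rational function on it, pulled back along $A\to A/A_{\mathrm o}\to(A/A_{\mathrm o})/E$, is a non-constant $\sigma$-invariant map $A\lra\bP^1$, contradicting (2). Therefore the generic orbit is dense, and so the orbit of some $x\in A(\Kbar)$ is Zariski dense, proving (2)$\Rightarrow$(1). The announced strengthening to a commutative finitely generated monoid $\Phi$ of dominant endomorphisms is handled by the same scheme, using a common power of the finitely many generators in place of $\sigma^k$ and carrying the eigenvalue bookkeeping out for the finite set of linear parts.
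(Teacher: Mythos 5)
Your direction $(1)\Rightarrow(2)$ is fine. The converse has two problems, and the second one is fatal.

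First, the reduction that lets you discard the unipotent part rests on a false lemma. You claim that since $\End(A/A_{\mathrm o})\otimes\Q$ is semisimple it has no nonzero nilpotents, so the unipotent $\psi$ on $A/A_{\mathrm o}$ must be $\id$. Semisimplicity of a ring rules out nilpotent two-sided \emph{ideals}, not nilpotent \emph{elements}: $M_2(\Q)$ is semisimple yet contains nilpotents. For a concrete counterexample take $A=E^2$ with $E$ a non-CM elliptic curve and $\psi$ the unipotent matrix with $1$'s on the diagonal and a single $1$ above it; then $A_{\mathrm o}=0$ and $\psi\neq\id$. The bulk of the paper's proof --- the decomposition $A=A_1+A_2$, the binomial-coefficient computation in Lemma~\ref{infinitely many iterates nilpotent operator}, and the Mordell--Lang input in Lemma~\ref{combinatorial lemma} --- exists precisely to handle this nontrivial unipotent block, so it cannot be wished away.

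Second, and more fundamentally: the ``standard spreading-out argument'' you invoke, namely that if every $x\in A(\Kbar)$ has a non-dense orbit then so does the generic point, does not exist. The orbit closure $\overline{\OO_\sigma(x)}$ is not the fibre of any constructible family over $A$; it is a priori only a countable intersection, and there is no specialisation principle running in the direction you need. What you assert is exactly the arithmetic content the theorem is meant to supply. Amerik--Campana gives only that the locus of non-dense orbits is a \emph{countable} union of proper subvarieties $Y_i$; when $K$ is countable (e.g.\ $K=\Q$) this leaves open precisely the possibility that $A(\Kbar)\subseteq\bigcup_i Y_i(\Kbar)$ even though the generic orbit is dense. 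The introduction of the paper singles this out as the whole point of the conjecture. Its proof therefore never reduces to the generic point; instead it constructs a good algebraic point directly. A Hilbert-irreducibility argument (Lemma~\ref{existence of a point 2}) produces $x_2\in A_2(\Kbar)$ with $\beta(x_2)+y_2$ generating a Zariski-dense cyclic group, a rank argument (Lemma~\ref{isogenies and m}) produces $x_1\in A_1(\Kbar)$ sufficiently independent of a finitely generated $\End(A)$-module, and density of $\OO_\sigma(x_1+x_2)$ is then forced via Faltings' theorem and Corollary~\ref{immediate corollary to many iterates}. Your Ueno-theorem observation and the final translation-on-$A/A_{\mathrm o}$ step are in the same spirit as the paper's fibration construction (Lemma~\ref{fibration exists}) and are worth keeping, but as written the proposal assumes the hard half of the theorem rather than proving it.
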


The motivation for Conjecture~\ref{M-S conjecture} comes from two different directions. First, 
Zhang \cite[Conjecture~4.1.6]{Zhang-lec} proposed a variant of Conjecture~\ref{M-S conjecture} 
for polarizable endomorphisms $\varphi$ of projective varietieties $X$ defined over $\Qbar$ (we 
say that $\varphi$ is \emph{polarizable} if there exists an ample line bundle $\cL$ on $X$ so 
that $\varphi^*(\cL)\isomto \cL^{\otimes d}$ for some integer $d>1$). The polarizability 
condition imposed by Zhang is stronger than the hypothesis from Conjecture~\ref{M-S conjecture} 
that $\varphi$ preserves no non-constant fibration of $X$. The motivation for the stronger 
hypothesis appearing in  \cite[Conjecture~4.1.6]{Zhang-lec} lies in the fact that in his 
seminal paper \cite{Zhang-lec}, Zhang was interested in the arithmetic properties exhibited by 
the dynamics of endomorphisms of projective varieties. In particular, Zhang was interested in formulating good dynamical analogues of the classical Manin-Mumford and Bogomolov 
Conjectures, and thus he wanted to use the canonical heights associated to polarizable endomorphisms (previously introduced by Call and Silverman \cite{Call-Silverman}). The second motivation for Conjecture~\ref{M-S conjecture} comes from the fact that 
its conclusion is  known assuming $K$ is an uncountable field of characteristic $0$ 
(see~\cite{Amerik-Campana}). More precisely, in \cite{Amerik-Campana}, Amerik and Campana 
proved that if $\varphi$ preserves no non-constant rational fibration, then there exist 
countably many proper subvarieties $Y_i$ of $X$ so that for 
each $x\in X(\Kbar)\setminus \cup_i Y_i(\Kbar)$, the orbit $\OO_\varphi(x)$ is Zariski dense in 
$X$. However, if $K$ is countable, then the result of Amerik and Campana leaves open the 
possibility that each algebraic point of $X$ is also an algebraic point of some 
subvariety $Y_i$ for some positive integer $i$. Hence, Conjecture~\ref{M-S conjecture} raises a 
deeper arithmetical question.

We are able to extend Theorem~\ref{abelian varieties} to the action of any commutative finitely 
generated monoid of dominant endomorphisms of an abelian variety. For a monoid $S$ of 
endomorphisms of an abelian variety $A$, and for any point $x\in A$, we let $\OO_S(x)$ be 
the $S$-orbit of $x$, i.e. the set of all   $\psi(x)$, where $\psi\in S$. 

\begin{theorem}
\label{general theorem}
Let $K$ be a field of characteristic $0$, let $\Kbar$ be an algebraic closure of $K$, and let  
$S$ be a finitely generated, commutative monoid of dominant endomorphisms of an abelian variety 
$A$ defined over $\Kbar$. Then either there exists $x\in A(\Kbar)$ such that $\OO_S(x)$ is 
Zariski dense in $A$  or there exists a non-constant rational map $f:A\lra \bP^1$ such that 
$f\circ \sigma = f$ for each $\sigma\in S$.
\end{theorem}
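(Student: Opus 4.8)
First I would reduce the statement: it is enough to show that if there is no non-constant rational map $f\colon A\lra\bP^1$ with $f\circ\sigma=f$ for all $\sigma\in S$ — equivalently, $\Kbar(A)^S:=\{g\in\Kbar(A): g\circ\sigma=g\text{ for all }\sigma\in S\}=\Kbar$ — then some point of $A(\Kbar)$ has Zariski-dense $S$-orbit, the converse being immediate since an element of $\Kbar(A)^S$ is constant on every $S$-orbit. The main idea is to exploit the rigidity of endomorphisms of abelian varieties: since $A$ is complete, every dominant (equivalently finite) endomorphism $\sigma$ of $A$ has the form $\sigma=\tau_{c_\sigma}\circ\psi_\sigma$, with $\psi_\sigma\in\End(A)$ an isogeny and $\tau_{c_\sigma}$ translation by $c_\sigma\in A(\Kbar)$; writing $\sigma\circ\sigma'=\sigma'\circ\sigma$ out shows the linear parts $\psi_\sigma$ commute in $\End(A)$ while the translation parts satisfy $(\psi_\sigma-1)c_{\sigma'}=(\psi_{\sigma'}-1)c_\sigma$. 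Let $R\subseteq\End(A)$ be the commutative subring generated by the finitely many $\psi_{\sigma_i}$ (a finitely generated $\bZ$-module) and $M\subseteq R$ the submonoid they generate. As a rational map to $\bP^1$ pulls back along an isogeny and an isogeny preserves Zariski density in both directions, we may replace $A$ by any isogenous abelian variety, and in particular decompose $A$ up to isogeny along the primary decomposition of $H_1(A,\bQ)$ over $R\tensor\bQ$. Two elementary reductions recur: (i) replacing $S$ by the finite-index submonoid $S^{[N]}$ generated by fixed powers $\sigma_i^N$ is harmless in the contrapositive, because if $f\in\Kbar(A)^{S^{[N]}}\setminus\Kbar$ then (using $f\circ\sigma_i^N=f$) the set $\{f\circ\sigma:\sigma\in S\}$ is finite and permuted by $S$, so an elementary symmetric function of its elements lies in $\Kbar(A)^S$, and cannot be constant lest $f$ be algebraic over the algebraically closed $\Kbar$; (ii) $\overline{\OO_S(x)}=x+D_x$ where $D_x=\overline{\OO_S(x)}-x$ is an $M$-stable algebraic subgroup containing each seed $s_i(x):=(\psi_{\sigma_i}-1)x+c_{\sigma_i}$, and $x$ has a dense orbit iff the smallest $M$-stable algebraic subgroup containing $s_1(x),\dots,s_r(x)$ is $A$.

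The argument then organizes around the eigenvalues of $M$. Write $H_1(A,\bQ)=W_{\mathrm{qu}}\oplus W_{\mathrm{exp}}$, where on $W_{\mathrm{qu}}$ every $\psi_\sigma$ has all eigenvalues roots of unity and $W_{\mathrm{exp}}$ is the sum of the remaining primary pieces, so that up to isogeny $A\sim A_{\mathrm{qu}}\times A_{\mathrm{exp}}$. On $A_{\mathrm{qu}}$, after passing to $S^{[N]}$ we may take each $\psi_\sigma=1+N_\sigma$ with the $N_\sigma$ commuting and nilpotent, whence $\sigma^n(x)=\psi_\sigma^n(x)+\sum_{0\le i<n}\psi_\sigma^i(c_\sigma)$ is a polynomial in $n$ and orbit closures are finite unions of cosets of one abelian subvariety. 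On $A_{\mathrm{exp}}$, by Kronecker's theorem some $\psi_\sigma$ has a complex eigenvalue of absolute value $>1$, hence first dynamical degree $>1$ there, so the self-maps in play carry canonical height functions $\widehat h$ (N\'eron--Tate, Call--Silverman, or Kawaguchi--Silverman type, as needed) that are nonnegative, vanish exactly on the preperiodic locus, and grow like $\delta^{2n}$ along orbits for a fixed $\delta>1$.

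On $A_{\mathrm{qu}}$ the dichotomy is now transparent: by (ii), $x$ has a dense orbit iff the seeds $s_i(x)=N_{\sigma_i}x+c_{\sigma_i}$ — which sweep out the coset of $\im\bigl((N_{\sigma_1},\dots,N_{\sigma_r})\bigr)$ through $(c_{\sigma_i})_i$ as $x$ varies — generate an $M$-stable algebraic subgroup equal to $A_{\mathrm{qu}}$. If there is a proper $M$-stable abelian subvariety $D\subsetneq A_{\mathrm{qu}}$ with $\im N_{\sigma_i}\subseteq D$ and $c_{\sigma_i}\in D$ for all $i$, then $S$ acts on the positive-dimensional $A_{\mathrm{qu}}/D$ through a finite group of translations, and dividing by that finite group and pulling back any non-constant rational function on the quotient gives an $S$-invariant $f\colon A\lra\bP^1$; otherwise one chooses the seed suitably in its coset and uses that the Zariski closure of a finitely generated subgroup is a finite union of cosets of an abelian subvariety to realize $A_{\mathrm{qu}}$ as the group generated by the seeds of some $x\in A_{\mathrm{qu}}(\Kbar)$. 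On $A_{\mathrm{exp}}$ one picks $x$ off the (proper) preperiodic locus and shows $\overline{\OO_\sigma(x)}$ cannot be a proper subvariety $V$: such a $V$ would be $\sigma$-invariant and would surject onto the factors carrying the expanding and the recurrent directions, so the exponential growth $\widehat h(\sigma^n x)\asymp\delta^{2n}$, read through a finite map from $V$ to the recurrent factor, would have to be at most polynomial — a contradiction (trivial when $A_{\mathrm{exp}}\sim E$ and $\sigma\sim[m]$, the orbit being then an infinite subset of a curve). Combining a good $x$ on each factor yields $x\in A(\Kbar)$ with $\overline{\OO_S(x)}=A$.

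I expect the main obstacle to be the interaction of the expanding directions with the eigenvalue $1$, and more generally with the unipotent/translation directions, on a single isogeny factor. When $1$ occurs as an eigenvalue of some $\psi_\sigma$ alongside non-root-of-unity eigenvalues, one cannot simply absorb the translation part into a fixed point (that needs $\psi_\sigma-1$ to be an isogeny); instead one must work along the $M$-stable filtration coming from the generalized $1$-eigenspaces, produce the dense orbit on the expanding quotient, and lift it through the unipotent/translation extension, matching the two alternatives — no $S$-invariant fibration, and dense $S$-orbit — on each graded piece before recombining. A secondary but real point is the passage to $\oQ$: when $\Kbar$ is uncountable the theorem of Amerik and Campana already gives a dense orbit through $A(\Kbar)$, and when $\Kbar$ is countable it is the canonical-height estimates above — not any "avoid countably many proper subvarieties" maneuver, which genuinely fails over $\oQ$ — that produce the algebraic point; securing the height inputs for a commutative monoid of affine self-maps with first dynamical degree $>1$ is the remaining technical burden.
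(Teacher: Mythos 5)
Your high-level skeleton matches the paper's: reduce to the contrapositive, write each $\sigma$ as a translation composed with an isogeny via rigidity, pass to a finite-index submonoid $S^{[N]}$ so that the only root-of-unity eigenvalue is $1$ (your reduction (i) is precisely Lemma~\ref{a power suffices 2}), and split $A$ up to isogeny into a generalized $1$-eigenspace $B_2$ ($=A_{\mathrm{qu}}$) and a complement $B_1$ ($=A_{\mathrm{exp}}$). On $B_2$ you correctly observe that iterates are given by a polynomial in $n$; the paper packages this as Lemma~\ref{infinitely many iterates nilpotent operator} and combines it with Faltings' theorem via Lemma~\ref{combinatorial lemma}. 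The fibration in the degenerate case is the quotient $A\lra A/(B_1+C_S)$, as you say.

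The genuine gap is in your treatment of $A_{\mathrm{exp}}$. Choosing $x$ off the preperiodic locus and observing $\widehat h(\sigma^n x)\asymp\delta^{2n}$ does \emph{not} force the orbit closure to be all of $A_{\mathrm{exp}}$: take $A_{\mathrm{exp}}=E\times E$, $\sigma=[2]\times[2]$, $x=(P,0)$ with $P$ non-torsion. Then $x$ is non-preperiodic, the canonical height grows exponentially, yet $\overline{\OO_\sigma(x)}=E\times\{0\}$. The claim that a proper invariant $V$ ``would surject onto the factors carrying the expanding and the recurrent directions'' has no justification and is false in this example. Heights are not the right tool; the paper never invokes them. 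What the paper actually does is: (a) use Fact~\ref{inside a finitely generated group} plus Faltings/Mordell--Lang (Fact~\ref{Faltings theorem}) to see that $\overline{\OO_S(x)}$ is a \emph{finite union of cosets} of abelian subvarieties, and then (b) choose $x_1\in B_1(\Kbar)$ via Lemma~\ref{isogenies and m} so that any $\psi\in\End(A)$ with $\psi(x_1)$ in a fixed finitely generated subgroup $\Gamma$ must kill $B_1$; this uses that $C_i(\Kbar)\otimes_{R_i}\Frac(R_i)$ is an infinite-dimensional vector space, an algebraic genericity argument rather than a metric one. Separately, your reduction (ii) as stated (``$\overline{\OO_S(x)}=x+D_x$ with $D_x$ a subgroup, and dense orbit iff the smallest $M$-stable subgroup containing the seeds is $A$'') is not correct as a general statement — orbit closures are only finite unions of cosets, and the ``only if'' direction is the easy one while the ``if'' direction is exactly what needs to be proved and requires the Mordell--Lang input above. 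Finally, on $B_2$ your phrase ``one chooses the seed suitably in its coset'' hides the real content: the paper's Lemma~\ref{existence of a point 2} produces that point via a Galois-theoretic argument (Hilbertianity of $L(C_{\tor})$ and the existence of $S_n$-extensions, Claim~\ref{simple groups claim}), which is a nontrivial and specifically arithmetic input your sketch omits.
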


It is reasonable to formulate an extension of Conjecture~\ref{M-S conjecture} to the setting of 
a monoid action of rational self-maps on an algebraic variety $X$. However, there are several 
additional complications arising from such a generalization even in the case of the dynamics of 
endomorphisms of an abelian variety $A$, such as:
\begin{enumerate}
\item[(i)] Should we impose any restriction on the monoid $S$? Theorem~\ref{general theorem} is 
valid only for finitely generated, commutative monoids, and our method of proof does not seem 
to extend beyond this case (at least not in the case of arbitrary endomorphisms of an abelian 
variety $A$; if $S$ is an arbitrary commuting monoid of dominant group endomorphisms of $A$, 
then the conclusion of Theorem~\ref{general theorem} holds easily).
\item[(ii)] Assuming there is no non-constant fibration preserved by the entire monoid $S$, is 
it true that there exists some $\sigma\in S$ and there exists $x\in A(\Kbar)$ such that 
$\OO_\sigma(x)$ is Zariski dense in $A$? We have examples of  non-commuting monoids $S$ 
generated by two group homomorphisms of $A$ such that there is no non-constant fibration 
preserved by $S$, even though for \emph{each} $\sigma\in S$ there exists a non-constant 
fibration preserved by $\sigma$. On the other hand, if $S$ is a commutative monoid of group 
homomorphisms of $A$, then it is easy to see that the above question has a positive answer.
\end{enumerate}
Finally, we note that Amerik-Campana's result \cite{Amerik-Campana} was extended 
in~\cite{preprint} for arbitrary monoids $S$ acting on an algebraic variety $X$ through 
dominant rational endomorphisms, i.e. if there is no non-constant rational fibration preserved 
by $S$, then there exist countably many proper subvarieties $Y_i\subset X$ such that for each 
$x\in X(\Kbar)\setminus \cup_i Y_i(\Kbar)$, the orbit $\OO_S(x)$ is Zariski dense in $X$. 
Again, similar to \cite{Amerik-Campana}, the result of \cite{preprint} leaves open the 
possiblity that if $K$ is countable, then $X(\Kbar)$ may be covered by $\cup_i Y_i(\Kbar)$.

Here is the strategy for our proof. By the classical theory of abelian varieties, we know that each endomorphism $\varphi$ of an abelian variety $A$ is of the form $T_y\circ \tau$ (for a translation map $T_y$, with $y\in A$) and some (algebraic) group homomorphism $\tau$. Since the endomorphisms $\varphi$ from the given monoid $S$ commute with each other, we obtain that also the corresponding group homomorphisms $\tau$ commute with each other. This gives us a lot of control on the action of the corresponding group homomorphisms $\tau$; in particular, if all endomorphisms from $S$ would also be group homomorphisms, then Theorem~\ref{general theorem} would follow easily. Essentially, in that special case, the problem would reduce to the following dichotomy: either there exists a positive dimensional algebraic subgroup of $A$ which is fixed by a finite index submonoid of $S$, or there exists a \emph{single} element $\sigma$ of $S$, and there exists an algebraic point $x$ of $A$ whose $\sigma$-orbit is Zariski dense in $A$ (essentially, such a point $x$ has the property that the cyclic subgroup generated by $x$ is Zariski dense in $A$). So, if $S$ consists only of group homomorphisms, the conclusion of Theorem~\ref{general theorem} holds even in a stronger form. However, if the endomorphisms from $S$ are not all group endomorphisms of $A$, then the proof is much more complicated. One can still find a necessary and sufficient condition under which there exists a non-constant rational fibration preserved by all elements in $S$, but that condition is very technical. Finally, we note that the exact same proof works to prove a variant of Theorem~\ref{general theorem} with the abelian variety $A$ replaced by a power of the torus. On the other hand, our proof does not seem to generalize to the case of semiabelian varieties due to the failure of the Poincar\'e Reducibility Theorem (see Fact~\ref{Poincare}) for semiabelian varieties which are not isogenuous to split semiabelian varieties.

The plan of the paper is as follows. In Section~\ref{section monoids} we note several easy 
statements regarding monoids. We continue by stating some basic facts about abelian varieties 
in Section~\ref{abelian varieties section}. Then, in Section~\ref{reductions} and then in Section~\ref{necessary lemmas} we prove various 
reductions of the Theorem~\ref{general theorem}, respectively some 
auxilliary results needed later. In Section~\ref{one section} we prove Theorem~\ref{abelian 
varieties} as a way to introduce the reader to the more elaborate argument needed for the proof 
of Theorem~\ref{general theorem} (which is completed in Section~\ref{many section}). While
Theorem~\ref{abelian varieties} is a special case of Theorem~\ref{general theorem}, 
we have chosen to prove them separately because we believe it is easier for the reader to read 
first the argument done for a cyclic monoid (Theorem~\ref{abelian varieties}), which avoids 
some of the technicalities appearing in the proof of Theorem~\ref{general theorem}.

\medskip

{\bf Acknowledgments.} We thank Jason Bell, Thomas Tucker 
and Zinovy Reichstein for several useful discussions 
while writing this paper.  

%%%%%%%%%%%%%%%%%%%%%%%%%%%%%%%%%%%%%%%%%%%%%%%%%%%%%%%%%%%%%%%%%%%%%%%%%%%%%
%%%%%%%%%%%%%%%%%%%%%%%%%%%%%%%%%%%%%%%%%%%%%%%%%%%%%%%%%%%%%%%%%%%%%%%%%%%%%

\section{General results regarding monoids}
\label{section monoids}

We  need some basic facts about finitely generated, commutative monoids. First we need a 
definition.
\begin{definition}
\label{barT}
Let $S$ be any finitely generated, commutative monoid. For each submonoid $T\subseteq S$, we 
denote by $\bar{T}$ the submonoid containing all $x\in S$ with the property that there exist 
$y,z\in T$ such that $xy=z$.
\end{definition}

We also recall that a monoid $S$ is called \emph{cancellative} if whenever $xy=xz$ for $x,y,z\in S$, then $y=z$. We note that a monoid of dominant endomorphisms of a given algebraic variety is a cancellative monoid.

\begin{lemma}
\label{T bar T}
Let $S$ be a cancellative, commutative monoid generated by the elements $\gamma_1,\dots, \gamma_s$, and let 
$T$ be a submonoid of $S$ such that $\bar{T}=S$. Then there exists a  finitely generated 
submonoid $T_0\subset T$ and there exists a positive integer $n$ such  that 
$\gamma_i^n\in \bar{T_0}$ for each $i=1,\dots, s$.
\end{lemma}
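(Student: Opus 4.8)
The plan is to build $T_0$ generator-by-generator from the data witnessing $\bar T = S$. Since $\bar T = S$, for each generator $\gamma_i$ of $S$ there exist $y_i, z_i \in T$ with $\gamma_i y_i = z_i$. Let me first take $T_1$ to be the submonoid of $T$ generated by the finitely many elements $y_1,\dots,y_s,z_1,\dots,z_s$; this is a finitely generated submonoid of $T$ and already has $\gamma_i \in \bar{T_1}$ for every $i$ (witnessed by $\gamma_i y_i = z_i$ with $y_i, z_i \in T_1$). So in fact one could take $n = 1$. The only subtlety is whether $\bar{T_1}$ as computed \emph{inside $S$} really contains each $\gamma_i$: the definition of $\bar{T}$ (Definition~1.1) asks for $x \in S$ with $xy = z$ for some $y,z \in T$, and indeed $\gamma_i \in S$, $y_i, z_i \in T_1 \subseteq T \subseteq S$, and $\gamma_i y_i = z_i$ holds in $S$. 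Hence $\gamma_i \in \bar{T_1}$.

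Thus the core of the argument is essentially immediate once one unwinds the definitions, and the statement as given (with the positive integer $n$) is a weakening that allows $n=1$; I expect the authors phrase it with a general $n$ because in the application they may only know $\bar{T}=S$ after passing to powers, or because the witnesses $y_i,z_i$ are themselves only controlled up to taking powers. To be safe, I would carry out the argument allowing for that: if one only knows that for each $i$ there is $n_i \geq 1$ and $y_i, z_i \in T$ with $\gamma_i^{n_i} y_i = z_i$, then set $n = \operatorname{lcm}(n_1,\dots,n_s)$ (or simply $n = n_1 \cdots n_s$), and for each $i$ write $\gamma_i^n = (\gamma_i^{n_i})^{n/n_i}$; multiplying the relation $\gamma_i^{n_i} y_i = z_i$ by itself $n/n_i$ times (using commutativity) gives $\gamma_i^n \cdot y_i^{n/n_i} = z_i^{n/n_i}$, so with $T_0$ the submonoid of $T$ generated by all the $y_i$ and $z_i$ we get $\gamma_i^n \in \bar{T_0}$.

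The one place to be careful is that $T_0$ must be a submonoid of $T$: this is automatic since $y_i, z_i \in T$ and $T$ is closed under the monoid operation, so the submonoid they generate lies in $T$; and it is finitely generated by construction. I do not anticipate a genuine obstacle here — cancellativity of $S$ is not even needed for this lemma, and is presumably invoked only so that $\bar{T}$ interacts well with later arguments. The main (mild) point is simply to match the bookkeeping: produce explicit witnesses $\gamma_i^n y_i' = z_i'$ with $y_i', z_i'$ lying in a single finitely generated submonoid $T_0 \subseteq T$, which the above construction does.
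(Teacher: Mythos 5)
Your proof is correct, and it is genuinely different from---and considerably more economical than---the one in the paper. You simply unwind the definition of $\bar{T}$: since $\bar{T}=S$, each generator $\gamma_i$ has witnesses $y_i,z_i\in T$ with $\gamma_i y_i=z_i$, and the finitely generated submonoid $T_0\subseteq T$ generated by all the $y_i,z_i$ already verifies $\gamma_i\in\bar{T_0}$, so $n=1$ works. The paper instead passes to the free commutative monoid $\N^s$ via $f:\N^s\to S$, $f(e_i)=\gamma_i$, takes $U=f^{-1}(T)$, shows the subgroup $H\subseteq\Z^s$ generated by $U$ is all of $\Z^s$, picks $s$ linearly independent $u_1,\dots,u_s\in U$ spanning a finite-index subgroup $H_0\subseteq\Z^s$, sets $T_0$ to be the monoid generated by $f(u_1),\dots,f(u_s)$, proves $\bar{T_0}=f(H_0\cap\N^s)$ (using cancellativity in the step $\bar{\bar{T_0}}=\bar{T_0}$), and finally takes $n$ so that $ne_i\in H_0$ for all $i$. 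Your route yields a strictly sharper conclusion ($n=1$) and, as you observe, makes no use of cancellativity; the paper's longer detour produces a $T_0$ with a cleaner internal description ($\bar{T_0}$ identified with a lattice cone), but nothing in the later applications of the lemma requires that extra structure or a value of $n$ greater than $1$. Your closing discussion about what would happen if one only knew $\gamma_i^{n_i}\in\bar{T}$ is a reasonable robustness check, but it is not needed: $\bar{T}=S$ directly hands you witnesses for each $\gamma_i$ itself.
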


\begin{proof}
Let $f:\N^s\lra S$ be the homomorphism of monoids given by $f(e_i)=\gamma_i$, where $e_i\in\N^s$ 
is the $s$-tuple consisting only of zeros with the exception of the $i$-th entry which equals 
$1$. Let $U$ be the set of all $a\in \N^s$ such that $f(a)\in T$, and let $H$ be the subgroup 
of $\Z^s$ generated by $U$. Since $\bar{T}=S$, then $H=\Z^s$.  Therefore there exist $s$ 
linearly independent tuples in $U$; call them $u_1,\dots, u_s$. We claim that the monoid $T_0$ 
spanned by $f(u_1), \dots, f(u_s)$ satisfies the conclusion of our Lemma. 

Indeed, we first show that $\bar{T_0}=f(H_0\cap \N^s)$ where $H_0$ is the subgroup of $\Z^s$ generated by 
$u_1,\dots, u_s$. To see this, on one hand, it is clear that $\bar{T_0}\subseteq f(H_0\cap \N^s)$. Now, to see the reverse inclusion, note that $\bar{T_0}$ satisfies  $\bar{\bar{T_0}}=\bar{T_0}$. Indeed, if $x_1,x_2\in \bar{T_0}$ and $x\in S$ such that $xx_1=x_2$, we show that $x\in\bar{T_0}$. We have that there exist $y_i,z_i\in T_0$ such that $x_iy_i=z_i$ for $i=1,2$. Then we claim that $x(y_2z_1)=y_1z_2$ which would indeed show that $x\in\bar{T_0}$ because $y_2z_1,y_1z_2\in T_0$ (note that $T_0$ is a submonoid). To see the above equality in the cancellative monoid $S$, it suffices to prove that $x_1xy_2z_1=x_1y_1z_2$. Using that $x_1x=x_2$, $x_2y_2=z_2$ and $x_1y_1=z_1$, and that $S$ is commutative, we obtain the desired equality; hence $\bar{\bar{T_0}}=\bar{T_0}$ and thus $\bar{T_0}=f(H_0\cap \N^s)$.

Now, since $u_1,\dots, u_s$ are linearly independent over $\Z$ (as elements of 
$\Z^s$), then $H_0$ has finite index in $\Z^s$. So, there exists a positive integer $n$ such 
that $ne_i\in H_0$ for each $i=1,\dots, s$, and therefore $f(ne_i)=\gamma_i^n\in \bar{T_0}$.
\end{proof}

We also need some simple results from linear algebra.  The first is a consequence of the 
Lie-Kolchin triangularization theorem~\cite{kolchin}.

\begin{fact}
\label{commuting monoid linear algebra}
Let $S_0$ be a finitely generated, commuting monoid of matrices with entries in $\Qbar$. Then there exists an invertible matrix $C$ (with entries in $\Qbar$)  such that for each $A\in S_0$, the matrix $C^{-1}AC$ is upper triangular.
\end{fact}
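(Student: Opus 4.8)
The plan is to prove the slightly more general statement that any commuting family $\mathcal{F}$ of $n\times n$ matrices over $\Qbar$ can be simultaneously conjugated into upper triangular form, by induction on $n$; the hypothesis that $S_0$ is finitely generated will be used only to simplify the search for a common eigenvector. For $n=1$ there is nothing to prove, so assume $n>1$ and write $S_0=\langle A_1,\dots,A_k\rangle$.

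First I would produce a nonzero common eigenvector $v\in\Qbar^n$ for $A_1,\dots,A_k$. Since $\Qbar$ is algebraically closed, $A_1$ has an eigenvalue $\lambda_1$, and the eigenspace $W_1=\ker(A_1-\lambda_1 I)$ is nonzero and invariant under $A_2,\dots,A_k$ because these commute with $A_1$. Restricting $A_2$ to $W_1$ and passing to one of its eigenspaces inside $W_1$, then restricting $A_3$, and so on, yields after at most $k$ steps a nonzero subspace on which all of $A_1,\dots,A_k$ act by scalars; any nonzero vector $v$ in it is the desired common eigenvector. Since every element of $S_0$ is a word in the $A_i$, it sends $v$ to a product of the corresponding scalar eigenvalues times $v$; hence $v$ is in fact a common eigenvector for every $A\in S_0$ (and $I_n$ fixes $v$ trivially).

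Next I would extend $v$ to a basis of $\Qbar^n$ and let $P\in\GL_n(\Qbar)$ be the corresponding change-of-basis matrix. For each $A\in S_0$ we then have $P^{-1}AP=\left(\begin{smallmatrix}\mu_A & * \\ 0 & B_A\end{smallmatrix}\right)$ with $B_A\in\mathrm{Mat}_{n-1}(\Qbar)$, and $A\mapsto B_A$ is a homomorphism of monoids, so $\{B_A:A\in S_0\}$ is again a finitely generated commuting monoid of matrices, now of size $n-1$. By the inductive hypothesis there is $Q\in\GL_{n-1}(\Qbar)$ with $Q^{-1}B_AQ$ upper triangular for all $A$. Setting $C=P\left(\begin{smallmatrix}1 & 0 \\ 0 & Q\end{smallmatrix}\right)\in\GL_n(\Qbar)$ makes $C^{-1}AC$ upper triangular for every $A\in S_0$, which completes the induction.

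This is of course just the commutative case of the Lie--Kolchin theorem written out by hand. The one point that warrants a word of care is that $S_0$ is a monoid of possibly non-invertible matrices rather than a linear algebraic group, so one cannot literally invoke Lie--Kolchin for connected solvable groups; but the common-eigenvector step above is insensitive to invertibility (a singular matrix simply contributes the eigenvalue $0$, and an upper triangular matrix may have zeros on the diagonal), so the induction goes through verbatim. I do not expect any substantive obstacle.
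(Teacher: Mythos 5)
Your proof is correct, and it is a genuine proof where the paper offers none: the paper simply records the Fact as ``a consequence of the Lie--Kolchin triangularization theorem'' with a citation to Kolchin's 1948 paper, and moves on. Your write-up is the standard elementary induction (common eigenvector, peel off the first coordinate, recurse on the quotient), and every step checks out: the eigenspace $W_1 = \ker(A_1 - \lambda_1 I)$ is $A_j$-invariant by commutativity, iterating produces a subspace on which all generators act by scalars, hence on which every word in the generators (i.e.\ every element of $S_0$) acts by a scalar; and the passage to the $(n-1)\times(n-1)$ lower-right blocks $B_A$ is a monoid homomorphism, so the $B_A$ still commute and the induction closes. The point you flag --- that $S_0$ is a monoid which need not consist of invertible matrices, so the literal statement of Lie--Kolchin for connected solvable linear algebraic groups does not apply directly --- is a real (if minor) gap in the paper's citation, and your direct argument sidesteps it cleanly. (In the paper's actual application the elements of $S_0$ arise from isogenies and so are invertible, which is presumably why the authors were content to cite Kolchin, but the Fact as stated is more general and your proof covers it.) One could also remark, as you implicitly do, that finite generation is not needed for the result --- any commuting family of matrices over an algebraically closed field is simultaneously triangularizable --- it merely caps the number of eigenspace restrictions at $k$ rather than requiring a dimension-descent argument.
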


%\begin{proof}
%Essentially the proof reduces to the fact that if $A$ and $B$ are two commuting matrices, and $W$ is the eigenspace of $A$ corresponding to the eigenvalue $\lambda$, then $BW\subseteq W$. Indeed, we have $Av=\lambda v$ for each $v\in W$, and then
%$$A(Bv)=B(Av)=B(\lambda v) = \lambda (Bv),$$
%as claimed. For more details, we refer the reader to \cite{linear algebra book}.
%\end{proof}

Fact~\ref{commuting monoid linear algebra} will be used repeatedly throughout our proof. An 
important consequence of it is that the eigenvalues of each matrix in a commuting monoid $S_0$ 
are simply the entries on the diagonal (after a suitable change of coordinates). In particular, 
this has the following easy corollaries.

\begin{lemma}
\label{no root of unity}
Let $S_0$ be a commuting monoid of matrices with entries in $\Qbar$, generated by matrices $A_1,\dots, A_s$. Then there 
exists a positive integer $n$ such that for each matrix $A$ contained in the submonoid of $S_0$ 
generated by $A_1^n,\dots, A_s^n$, if $\lambda$ is an eigenvalue of $A$ which is also a root of 
unity, then $\lambda=1$.   
\end{lemma}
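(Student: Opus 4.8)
The plan is to combine the simultaneous triangularization from Fact~\ref{commuting monoid linear algebra} with the structure of finitely generated subgroups of $\Qbar^\times$. First I would apply Fact~\ref{commuting monoid linear algebra} to obtain an invertible matrix $C$ over $\Qbar$ with $B_i:=C^{-1}A_iC$ upper triangular for all $i$; say the matrices have size $d$, and let $\mu_{i,j}$ be the $(j,j)$-entry of $B_i$, so that $\mu_{i,1},\dots,\mu_{i,d}$ are exactly the eigenvalues of $A_i$. Since the $A_i$ commute, the $B_i$ commute, and every element of the submonoid generated by $A_1,\dots,A_s$ (respectively by $A_1^n,\dots,A_s^n$) can be written as $A_1^{a_1}\cdots A_s^{a_s}$ (respectively $A_1^{na_1}\cdots A_s^{na_s}$) with $a_i\in\N$. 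Conjugating by $C$, such a product is upper triangular with $(j,j)$-entry $\prod_i\mu_{i,j}^{a_i}$ (respectively $\prod_i\mu_{i,j}^{na_i}$), so every eigenvalue of a matrix in the submonoid has this shape for some $j\in\{1,\dots,d\}$.

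Next, for each $j$ let $\Gamma_j\subseteq\Qbar^\times$ be the subgroup generated by $\mu_{1,j},\dots,\mu_{s,j}$. As $\Gamma_j$ is a finitely generated abelian group, its torsion subgroup $T_j$ is finite and $\Gamma_j$ splits as $T_j\times F_j$ with $F_j$ torsion-free. Set $n:=\operatorname{lcm}(|T_1|,\dots,|T_d|)$. Writing $\mu_{i,j}=\zeta_{i,j}h_{i,j}$ with $\zeta_{i,j}\in T_j$ and $h_{i,j}\in F_j$, we get $\mu_{i,j}^n=h_{i,j}^n\in F_j$ since $\zeta_{i,j}^n=1$. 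Hence for any matrix $A$ in the submonoid generated by $A_1^n,\dots,A_s^n$, writing $A=A_1^{na_1}\cdots A_s^{na_s}$, each eigenvalue of $A$ equals $\prod_i\mu_{i,j}^{na_i}=\prod_i h_{i,j}^{na_i}\in F_j$ for some $j$; since $F_j$ is torsion-free, such an eigenvalue is a root of unity only if it equals $1$, which is the conclusion.

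The argument is essentially bookkeeping, so I do not expect a serious obstacle; the one point to get right is that raising to a power annihilating the torsion of each $\Gamma_j$ pushes every diagonal entry $\mu_{i,j}$ of every generator into the torsion-free part $F_j$ — a property then preserved under forming arbitrary products $\prod_i h_{i,j}^{na_i}$ — which is exactly what lets a single integer $n$ work uniformly over all words in the generators and all diagonal positions $j$.
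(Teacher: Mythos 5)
Your proof is correct and follows essentially the same route as the paper: both apply the Lie--Kolchin simultaneous triangularization so that every eigenvalue of a word $A_1^{a_1}\cdots A_s^{a_s}$ is a product $\prod_i\mu_{i,j}^{a_i}$ of diagonal entries, and both then fix a single exponent $n$ that annihilates every root of unity that can occur among such products. The paper just packages the choice of $n$ more compactly---it takes $n$ to be the order of the (finite, cyclic) group of roots of unity in the number field generated by all the $\mu_{i,j}$, whereas you split each $\Gamma_j$ as $T_j\times F_j$ and take $n=\operatorname{lcm}|T_j|$---but the mechanism is the same.
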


\begin{proof}
The conclusion holds with $n$ being the cardinality of the group of roots of unity contained in the number field $L$ which is generated by all the eigenvalues of the matrices $A_i$.
\end{proof}

\begin{lemma}
\label{minimal eigenspace for 1 linear algebra}
Let $S_0$ be a finitely generated, commuting monoid of matrices with the property that for each matrix $A$ in $S_0$, if $\lambda$ is an eigenvalue of $A$ which is a root of unity, then $\lambda=1$. Let $\cU_0$ be the set of 
matrices in $S_0$ with the property that the eigenspace  corresponding to the eigenvalue $1$ has 
the smallest dimension among all the matrices in $S_0$. Let $U_0$ be the submonoid generated by 
$\cU_0$. Then $\bar{U_0}=S_0$.
\end{lemma}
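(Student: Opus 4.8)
The plan is to exploit the simultaneous triangularization afforded by Fact~\ref{commuting monoid linear algebra} to reduce the statement to a bookkeeping argument about diagonal entries. After conjugating by a suitable $C$, every matrix $A\in S_0$ is upper triangular, so its eigenvalues are exactly the diagonal entries $\lambda_1(A),\dots,\lambda_d(A)$, listed with multiplicity in a fixed order of coordinates. For each $A$, let $I(A)\subseteq\{1,\dots,d\}$ be the set of indices $j$ with $\lambda_j(A)=1$. The key observation is that under multiplication the diagonal of a product is the coordinatewise product of the diagonals, so $I(AB)=I(A)\cap I(B)$ for all $A,B\in S_0$: a diagonal entry of $AB$ equals $1$ precisely when both factors have that entry equal to $1$, using that no nontrivial root of unity occurs as an eigenvalue (this is exactly where the hypothesis on roots of unity is needed — it rules out cancellation such as $\zeta\cdot\zeta^{-1}=1$).

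Granting $I(AB)=I(A)\cap I(B)$, the eigenspace for $1$ of $A$ has dimension at least $|I(A)|$, and in fact one checks that among elements achieving the minimal index set the eigenspace dimension is minimized. So $\cU_0$ consists of those $A\in S_0$ with $I(A)$ minimal (with respect to inclusion); by the intersection formula this minimal value is well defined, equal to $I_0:=\bigcap_{A\in S_0}I(A)$, and it is attained — since $S_0$ is finitely generated by $A_1,\dots,A_s$, the product $A_1\cdots A_s$ already satisfies $I(A_1\cdots A_s)=I_0$. Now take any $x\in S_0$. We must produce $y,z\in U_0$ with $xy=z$. Choose $w\in\cU_0$, so $I(w)=I_0$; then $I(xw)=I(x)\cap I_0=I_0$, so $xw\in\cU_0\subseteq U_0$ as well. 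Setting $y=w$ and $z=xw$ gives $xy=xw=z$ with both $y,z\in U_0$, witnessing $x\in\bar{U_0}$. Hence $\bar{U_0}=S_0$.

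I would organize the write-up as: (1) conjugate via Fact~\ref{commuting monoid linear algebra}; note conjugation changes none of the relevant data ($S_0\mapsto C^{-1}S_0C$ is an isomorphism of monoids preserving eigenvalues and eigenspace dimensions). (2) Record the multiplicativity of diagonal entries for upper triangular matrices and deduce $I(AB)=I(A)\cap I(B)$, invoking the root-of-unity hypothesis. (3) Identify $\cU_0$ with $\{A: I(A)=I_0\}$ where $I_0=\bigcap_{A}I(A)$, and observe $I_0$ is attained (e.g. by the product of the generators). (4) Conclude $\bar{U_0}=S_0$ by the one-line argument above.

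The main obstacle is step~(2): one must be careful that ``eigenvalue $1$'' is read off correctly from the diagonal after triangularization (eigenvalues with multiplicity equal diagonal entries with multiplicity, in a common basis for all of $S_0$), and that the hypothesis genuinely forbids $\lambda_j(A)\lambda_j(B)=1$ with $\lambda_j(A)\ne 1$. If $\lambda:=\lambda_j(A)\neq 1$ and $\lambda_j(A)\lambda_j(B)=1$, then $\lambda_j(B)=\lambda^{-1}$, and there need be no contradiction in general; so in fact the correct statement is the one-sided inclusion $I(A)\cap I(B)\subseteq I(AB)$ together with: if $j\in I(AB)$ then $\lambda_j(A)$ and $\lambda_j(B)$ are multiplicatively inverse, hence (taking powers and using finite generation plus Lemma~\ref{no root of unity}-type reasoning, or rather directly the standing hypothesis applied to $A^N$) both are roots of unity, hence both equal $1$. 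I would therefore phrase step~(2) as: for $j\in I(AB)$, the element $\lambda_j(A)$ has the property that some power $\lambda_j(A)^N$ together with $\lambda_j(B)^N$ multiply to $1$ where also $A^N\in S_0$ has $\lambda_j(A)^N$ as an eigenvalue — but this still does not immediately force $\lambda_j(A)$ to be a root of unity. The cleanest fix, which I would adopt, is to pass at the outset to the submonoid where all eigenvalues lie in a group with no relations forcing this, or more simply to note that we only ever need $\cU_0=\{A:I(A)=I_0\}$ and the containment $I(AB)\supseteq I(A)\cap I(B)$, which holds with no hypothesis at all and already suffices: then $I_0=\bigcap_A I(A)$ is attained by $A_1\cdots A_s$, every $A$ with $I(A)=I_0$ lies in $\cU_0$, and for arbitrary $x\in S_0$ and $w\in\cU_0$ we get $I(xw)\supseteq I(x)\cap I_0$; combined with minimality of $I_0$ this forces $I(xw)=I_0$, so $xw\in\cU_0$, giving $x\in\bar{U_0}$ exactly as before. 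Thus the role of the root-of-unity hypothesis is only to guarantee that ``eigenspace of $1$ has minimal dimension'' is equivalent to ``$I(A)$ minimal'', which is what lets us identify $\cU_0$; I would make that equivalence precise and the rest follows.
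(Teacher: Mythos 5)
Your setup — simultaneous upper triangularization via Fact~\ref{commuting monoid linear algebra}, tracking the set $I(A)$ of diagonal positions equal to $1$, aiming to show $I$ is constant on $\cU_0$ and then exhibiting a witness of the form $xw$ — is the right framework and essentially the one the paper uses. You also correctly diagnosed the flaw in the first draft: $I(AB)=I(A)\cap I(B)$ can fail because two non-roots-of-unity can multiply to $1$, and passing to $A^N$ does not obviously convert $\lambda_j(A)$ into a root of unity.

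The problem is that the proposed fix does not close the gap. With only the one-sided inclusion $I(AB)\supseteq I(A)\cap I(B)$, none of the following actually follows: that $I_0:=\bigcap_{A\in S_0}I(A)$ is attained (you only get $I(A_1\cdots A_s)\supseteq\bigcap_i I(A_i)\supseteq I_0$, not equality); that $\cU_0=\{A:I(A)=I_0\}$; or, most importantly, that $I(xw)=I_0$. Your final deduction ``$I(xw)\supseteq I(x)\cap I_0$, combined with minimality of $I_0$, forces $I(xw)=I_0$'' is not valid: minimality of the \emph{dimension} gives you a lower bound $|I(xw)|\ge r$ but no upper bound, and the one-sided inclusion only gives a lower bound on the set. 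Nothing prevents $I(xw)$ from strictly containing $I_0$ because some coordinate $j\notin I(w)$ happens to satisfy $\lambda_j(x)\lambda_j(w)=1$; in that case $xw\notin\cU_0$ and your witness fails. This is exactly the ``accidental cancellation'' you worried about, and the fix leaves it unaddressed.

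The missing idea (and the one the paper uses) is to take a suitable \emph{power} of $w$ rather than $w$ itself. Fix $w\in\cU_0$ and $x\in S_0$. For each position $j\notin I(w)$, the eigenvalue $\lambda_j(w)$ is not a root of unity (this is precisely where the standing hypothesis on $S_0$ is used, together with $\lambda_j(w)\ne 1$). Hence the equation $\lambda_j(w)^n=\lambda_j(x)^{-1}$ has at most one solution $n\ge 1$. There are only finitely many such $j$, so for all but finitely many $n$ we have $\lambda_j(w)^n\lambda_j(x)\ne 1$ for every $j\notin I(w)$; fix such an $n$. Then $I(w^n x)\subseteq I(w)$, so $|I(w^n x)|\le r$; by minimality of $r$, in fact $|I(w^n x)|=r$ and $w^n x\in\cU_0$. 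Now $x\cdot w^n = w^n x$ with $w^n\in U_0$ and $w^n x\in U_0$ shows $x\in\bar{U_0}$. The same ``choose a good power'' trick is also what rigorously establishes the preliminary claim that all elements of $\cU_0$ share a common $I$: if $A,B\in\cU_0$ had $I(A)\ne I(B)$, then for all large $n$ one has $I(AB^n)=I(A)\cap I(B)$, of cardinality $<r$, contradicting minimality. Without the power, neither step goes through.

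One smaller point worth flagging: the lemma's phrase ``eigenspace corresponding to the eigenvalue $1$'' should be read as the \emph{generalized} eigenspace (equivalently, the algebraic multiplicity of $1$, i.e.\ the number of diagonal $1$'s after triangularization); this matches both the proof and the way the lemma is applied in Section~\ref{many section} via $\max_{n\ge1}\dim\ker(\tau-\id)^n$.
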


\begin{proof}
Using Fact~\ref{commuting monoid linear algebra}, we can choose a basis so that each matrix in $S$ is represented by an upper triangular matrix. Furthermore, we may assume each matrix in $\cU$ has the first $r$ entries on the diagonal equal to $1$, and none of the other entries on the diagonal are equal to $1$ (or to a root of unity). Indeed, we know each matrix in $\cU$ has $r$ entries on the diagonal equal to $1$; if these entries equal to $1$ would not be in the same places of the diagonal for two distinct matrices $A$ and $B$ in $\cU$, then for some positive integers $m$ and $n$ we would have that $A^mB^n$ has fewer than $r$  entries equal to $1$ on the diagonal. So, indeed the $r$ entries equal to $1$ appear in the same position on the diagonal for each matrix in $\cU$; so we may assume they are the first $r$ entries, while the remaining $\ell-r$ entries on the diagonal of each matrix in $\cU$ is not a root of unity. 

Let $A\in \cU$.  
Now, for each matrix $B\in S$, even if there exist entries in the positions $i=r+1,\dots, \ell$ on the diagonal which are equal to $1$, there exists a positive integer $n$ such that the entries on the diagonal of $A^nB$ in the positions $i=r+1,\dots, \ell$ are not equal to $1$. This completes our proof.
\end{proof}

%%%%%%%%%%%%%%%%%%%%%%%%%%%%%%%%%%%%%%%%%%%%%%%%%%%%%%%%%%%%%%%%%%%%
%%%%%%%%%%%%%%%%%%%%%%%%%%%%%%%%%%%%%%%%%%%%%%%%%%%%%%%%%%%%%%%%%%%%

\section{Abelian varieties}
\label{abelian varieties section}

First we recall several results regarding abelian varieties (see \cite{Milne} for more details). The setup will be as follows: $A$ is an abelian variety defined over a field 
$K$ of characteristic $0$; since one needs only finitely many parameters in order to define $A$, then 
we may assume $K$ is a finitely generated extension of $\Q$. We let $\Kbar$ be a fixed algebraic 
closure of $K$.  At the expense of replacing $K$ by a finite extension we may assume that all 
algebraic group endomorphisms of $A$ are defined over $K$; we denote by $\End(A)$ the ring  of 
all these endomorphisms. Since the torsion subgroup $C_{\tor}$ of any algebraic subgroup 
$C\subseteq A$ is Zariski dense in $C$, we conclude that any algebraic subgroup of $A$ is 
defined over $K(A_{\tor})$.  Frequently we will use the following facts.

\begin{fact}
\label{connected fact}
Let $B$ and $C$ be algebraic subgroups of the abelian variety $A$. Then $(B+C)^0 = \left(B^0+C^0\right)$, where for any algebraic subgroup $H\subseteq A$, we denote by $H^0$ the connected component of $H$ (containing $0\in A$).
\end{fact}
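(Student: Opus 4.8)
The plan is to prove Fact~\ref{connected fact} by showing that $B^0+C^0$ is a connected algebraic subgroup of $A$ which is contained in $B+C$ and has finite index there; since $(B+C)^0$ is by definition the unique connected algebraic subgroup of finite index in $B+C$, this forces $(B+C)^0 = B^0+C^0$.

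First I would check that $B^0+C^0$ is a connected algebraic subgroup contained in $(B+C)^0$. Recall that for any algebraic subgroup $H\subseteq A$ the connected component $H^0$ is itself an algebraic subgroup of finite index in $H$. The set $B^0+C^0$ is the image of the morphism of algebraic groups $\mu\colon B^0\times C^0\to A$, $\mu(b,c)=b+c$; as the image of a morphism of group varieties it is an algebraic subgroup (here one uses that $B^0\times C^0$ is complete, so its image under $\mu$ is closed, and there is no need to pass to a Zariski closure), and it is connected because $B^0\times C^0$ is connected. Since $B^0+C^0\subseteq B+C$ and $B^0+C^0$ is connected, it is contained in $(B+C)^0$.

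For the reverse inclusion it suffices to show that $(B+C)/(B^0+C^0)$ is finite. The addition morphism $B\times C\to B+C$ is surjective, and composing with the quotient homomorphism $B+C\to (B+C)/(B^0+C^0)$ gives a surjective homomorphism which vanishes on $B^0\times C^0$; hence it factors through a surjection $(B/B^0)\times(C/C^0)\twoheadrightarrow (B+C)/(B^0+C^0)$. Since $B/B^0$ and $C/C^0$ are finite, the group $(B+C)/(B^0+C^0)$ is finite. Therefore $B^0+C^0$ is a connected algebraic subgroup of finite index in $B+C$, so it equals $(B+C)^0$, as desired.

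There is no serious obstacle here: the statement is elementary, and the only points requiring a little care are that the image of a morphism of complete group varieties is a closed algebraic subgroup (so that $B+C$ and $B^0+C^0$ are genuine algebraic subgroups rather than merely constructible sets) and that $H^0$ has finite index in $H$ for an algebraic subgroup $H$ of an abelian variety. Both are standard facts about algebraic groups and abelian varieties.
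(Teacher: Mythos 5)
Your proof is correct and follows essentially the same route as the paper's: both show that $B^0+C^0$ is connected (as the image of the connected group $B^0\times C^0$ under the sum map) and has finite index in $B+C$ (because $B^0\times C^0$ has finite index in $B\times C$), which forces $B^0+C^0=(B+C)^0$. Your write-up is just slightly more explicit about why the image is closed and about the factorization through $(B/B^0)\times(C/C^0)$.
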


\begin{proof}
The algebraic group $B^0 + C^0$ is the image of the connected group $B^0 \times C^0$ under the sum map and is therefore connected.  As $B^0 \times C^0$ has finite index in $B \times C$,
its image under the sum map has finite index in $B + C$.  Hence, $B^0 + C^0 = (B+C)^0$.
\end{proof}

%begin{proof}
%Since $B=\cup_{i=1}^k (b_i+B^0)$ and $C=\cup_{j=1}^\ell (c_j+C^0)$, where the $b_i$'s and the $c_j$'s are torsion points of $A$, then
%$$B+C=\bigcup_{\substack{1\le i\le k\\1\le j\le \ell}}\left((b_i+c_j)+B^0+C^0\right),$$
%which yields the desired conclusion.
%\end{proof}

The following result is proven in \cite[Proposition~10.1]{Milne}.
\begin{fact}[Poincar\'e's Reducibility Theorem]
\label{Poincare}
If $B\subseteq A$ is an abelian subvariety of $A$, then there exists an abelian subvariety $C\subseteq A$ such that $A=B+C$ and $B\cap C$ is finite; in particular $A/B$ and $C$ are isogenous.
\end{fact}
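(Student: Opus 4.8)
The plan is to invoke the existence of a polarization on $A$ and pass to the dual abelian variety, following the classical argument. Since every abelian subvariety of $A$ is defined over a finite extension of $K$ and the assertion is geometric in nature, I may freely replace $K$ by a finite extension; in particular I fix an ample line bundle $\cL$ on $A$ together with the associated isogeny $\phi_{\cL}\colon A\lra\widehat{A}$ onto the dual abelian variety $\widehat{A}=\Pic^0(A)$, given on points by $a\mapsto t_a^*\cL\tensor\cL^{-1}$, where $t_a$ is translation by $a$.

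Write $\iota\colon B\inj A$ for the inclusion and $\widehat{\iota}\colon\widehat{A}\lra\widehat{B}$ for the dual morphism, which is surjective because $\iota$ has trivial kernel. The key point is that the composition $g:=\widehat{\iota}\circ\phi_{\cL}\circ\iota\colon B\lra\widehat{B}$ equals $\phi_{\iota^*\cL}$, the map attached to the restriction $\iota^*\cL$ of $\cL$ to $B$; since the restriction of an ample line bundle to a closed subvariety is again ample, $\iota^*\cL$ is ample on $B$, and hence $g$ is an isogeny, in particular it has finite kernel. I then set
\[
C:=\left(\ker\bigl(\widehat{\iota}\circ\phi_{\cL}\bigr)\right)^0,
\]
the connected component of the kernel of the composite surjection $\widehat{\iota}\circ\phi_{\cL}\colon A\lra\widehat{B}$; thus $C$ is an abelian subvariety of $A$ with $\dim C=\dim A-\dim B$.

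It remains to check the three assertions. First, $B\cap C$ is finite, since any point of $B\cap C$ lies in $\ker g$, which is finite. Second, $A=B+C$: the subgroup $B+C$ is closed and connected, being the image of the connected group $B\times C$ under addition, and by the finiteness of $B\cap C$ it has dimension $\dim B+\dim C=\dim A$, whence $B+C=A$. Third, the composition $C\inj A\lra A/B$ is surjective by the dimension count and has kernel $B\cap C$, which is finite, so it is an isogeny; hence $A/B$ and $C$ are isogenous (and in fact the sum map exhibits $A$ as isogenous to $B\times(A/B)$).

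The only genuinely non-formal input is the identity $\widehat{\iota}\circ\phi_{\cL}\circ\iota=\phi_{\iota^*\cL}$ together with the ampleness of $\iota^*\cL$ on $B$; both are standard facts about line bundles on abelian varieties and about closed subvarieties of projective space, and everything else is a dimension count. If one wanted a fully self-contained treatment, the main obstacle would be the construction of $\widehat{A}$ and the proof that $\phi_{\cL}$ is an isogeny for ample $\cL$ — but for our purposes these are simply cited, exactly as in \cite[Proposition~10.1]{Milne}.
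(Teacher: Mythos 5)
Your proof is correct. The paper does not give a proof of this statement at all — it records it as a fact and cites \cite[Proposition~10.1]{Milne} — and the argument you write out (fix a polarization $\phi_{\cL}\colon A\to\widehat{A}$, set $C=(\ker(\widehat{\iota}\circ\phi_{\cL}))^0$, observe $B\cap C\subseteq\ker\phi_{\iota^*\cL}$ is finite because $\iota^*\cL$ is ample on $B$, then finish by a dimension count) is exactly the standard proof given in that reference, so there is nothing to compare beyond noting that you have supplied the proof the paper delegates to Milne.
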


Poincar\'e's Reducibility Theorem yields that any abelian variety is isogenous with a direct product of finitely many simple abelian varieties, i.e. $A\isomto A_0:=\prod_{i=1}^r C_i^{k_i}$, where each $C_i$ is simple. Then $\End(A)\isomto\End(A_0)$ (see also \cite[Section~1.10]{Milne}), and moreover $\End(A_0)\isomto \prod_{i=1}^r M_{k_i}(R_i)$, where $M_{k_i}(R_i)$ is the ring of all $k_i$-by-$k_i$ matrices with entries in the ring $R_i:=\End(C_i)$. For any simple abelian variety $C$, the ring $R:=\End(C)$ is a finite integral extension of $\Z$. Therefore we have the following fact. 
\begin{fact}
\label{minimal poly}
Let $A$ be an abelian variety defined over a field of characteristic $0$. For each algebraic group  endomorphism $\phi:A\lra A$ there exists a minimal monic polynomial $f\in \Z[t]$ of degree at most $2\dim(A)$ such that $f(\phi)=0$. 
\end{fact}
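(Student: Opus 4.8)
The plan is to produce a faithful integral linear representation of $\End(A)$ of rank exactly $2\dim(A)$ and then read off the bound from the Cayley--Hamilton theorem. As in the setup, it suffices to treat the case in which $K$ is finitely generated over $\Q$, and fixing an embedding $K\hookrightarrow\C$ we may even assume $K=\C$: the assertion $f(\phi)=0$ is a polynomial identity in the ring $\End(A)$ and so is unaffected by base change. Via the complex-analytic uniformization we have $A(\C)\cong V/\Lambda$, where $V\cong\C^{g}$ with $g:=\dim(A)$ and $\Lambda\subset V$ is a lattice, so that $\Lambda$ is a free $\Z$-module of rank $2g$ spanning $V$ over $\C$. (If one prefers to avoid the analytic picture, one may replace $\Lambda$ throughout by an $\ell$-adic Tate module $T_\ell A$, a free $\Z_\ell$-module of rank $2g$, on which $\End(A)$ acts faithfully; the argument is identical, and at the end one intersects over all $\ell$ to get $\Z$-coefficients.)

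First I would note that the algebraic group endomorphism $\phi$ lifts to a $\C$-linear map $\widetilde\phi\colon V\to V$ with $\widetilde\phi(\Lambda)\subseteq\Lambda$, and that $\phi\mapsto\widetilde\phi|_{\Lambda}$ is an injective ring homomorphism $\End(A)\hookrightarrow\End_{\Z}(\Lambda)\cong M_{2g}(\Z)$ (injective because a group endomorphism inducing the zero map on $\Lambda$, hence on $V=\Lambda\otimes_{\Z}\R$, is the zero map on $A$). Let $P(t):=\det\!\bigl(tI-\widetilde\phi|_{\Lambda}\bigr)\in\Z[t]$ be the characteristic polynomial of this integer matrix; it is monic of degree exactly $2g$. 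By Cayley--Hamilton $P(\widetilde\phi)$ annihilates $\Lambda$, hence annihilates $V=\Lambda\otimes_{\Z}\C$, so $P(\phi)=0$ in $\End(A)$. In particular $\phi$ is integral over $\Z$, and $\Q[\phi]\subseteq\End(A)\otimes\Q$ is finite-dimensional (spanned by $1,\phi,\dots,\phi^{2g-1}$), so $\phi$ has a well-defined minimal polynomial $f$ over $\Q$, which divides $P$ in $\Q[t]$. Since $P$ is monic with coefficients in $\Z$, $f$ is monic, and $\Z$ is integrally closed in $\Q$, Gauss's lemma gives $f\in\Z[t]$; and $\deg f\le\deg P=2g$. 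This $f$ has all the asserted properties.

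There is no real obstacle here; the one point that needs care is the \emph{degree bound} $2\dim(A)$, which is in general strictly smaller than the naive bounds $\dim_{\Q}\bigl(\End(A)\otimes\Q\bigr)$ or $\operatorname{rank}_{\Z}\End(A)$ coming from the regular representation of the endomorphism algebra (or from the structure decomposition $\End(A)\cong\prod_i M_{k_i}(R_i)$ recalled above). This is exactly why one must work with a faithful representation on a module of rank precisely $2\dim(A)$ --- the homology lattice $H_1(A,\Z)$, equivalently an $\ell$-adic Tate module --- rather than with the action of $\End(A)$ on itself; once such a representation is in hand, the statement is just Cayley--Hamilton together with Gauss's lemma.
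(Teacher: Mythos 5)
Your proof is correct, and it is the standard one. A word on how it relates to the paper: the paper does not actually prove this statement. It is labeled a \emph{Fact} and is presented as a consequence of the structure theory just recalled (Poincar\'e reducibility, $\End(A)\cong\prod_i M_{k_i}(R_i)$ with each $R_i$ a finite $\Z$-algebra), with a pointer to Milne's notes. You are right that this surrounding discussion does not by itself yield the degree bound: the regular representation of $\End(A)\otimes\Q$, or of $\prod_i M_{k_i}(R_i\otimes\Q)$, produces a monic integral polynomial of degree $\dim_\Q(\End(A)\otimes\Q)$, which is typically larger than $2\dim A$ (for $A=E^g$ with $E$ a CM elliptic curve it is $2g^2$). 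One \emph{can} extract the bound $2\dim A$ from the product decomposition by using reduced characteristic polynomials of matrices over the division algebras $R_i\otimes\Q$ together with the fact that $\dim_\Q(R_i\otimes\Q)$ divides $2\dim C_i$, but that is more involved than necessary. Passing directly to the faithful rank-$2g$ integral representation on $H_1(A,\Z)$ (or on a Tate module $T_\ell A$, as you note, which also avoids any reduction to $\C$) and invoking Cayley--Hamilton plus Gauss's lemma is cleaner and is exactly the argument in the standard references. One small slip in your write-up: in the line where you conclude $P(\widetilde\phi)=0$ on $V$, you write $V=\Lambda\otimes_\Z\C$; that tensor product has complex dimension $2g$, not $g$. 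What you mean, and what you wrote correctly a few lines earlier, is that $\Lambda$ spans $V$ over $\R$ (so $V\cong\Lambda\otimes_\Z\R$ as real vector spaces), and since $P(\widetilde\phi)$ is $\C$-linear and vanishes on a full lattice it vanishes on all of $V$.
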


The following result is proven in \cite[Corollary~1.2]{Milne}.
\begin{fact}[Rigidity Theorem]
\label{rigidity}
Each endomorphism $\psi:A\lra A$ is of the form $T_y \circ \phi$ for some
$y \in A$, where $T_y:A \to A$ is the translation map $x \mapsto x + y$,
and $\phi\in \End(A)$ is an \emph{algebraic group} endomorphism. In particular, if $\psi$ is a 
finite map, then $\phi :A\lra A$ is an isogeny. Furthermore,  the pair $(T_y,\phi)$ is uniquely 
determined by $\psi$. 
\end{fact}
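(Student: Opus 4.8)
The plan is to deduce the whole statement from the single assertion that \emph{any morphism of abelian varieties carrying $0$ to $0$ is automatically a homomorphism of algebraic groups}. Granting this: given an arbitrary endomorphism $\psi\colon A\to A$ of $A$ as a variety, set $y:=\psi(0)\in A$ and $\phi:=T_{-y}\circ\psi$. Then $\phi(0)=0$, so $\phi$ is a group endomorphism, i.e. $\phi\in\End(A)$, and $\psi=T_y\circ\phi$. Uniqueness is immediate: if $T_y\circ\phi=T_{y'}\circ\phi'$, then evaluating at $0$ gives $y=y'$ (as $\phi(0)=\phi'(0)=0$), and cancelling $T_y$ gives $\phi=\phi'$. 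Finally, if $\psi$ is finite then $\phi=T_{-y}\circ\psi$ is finite as well (translations are isomorphisms); a finite morphism $A\to A$ has finite fibres and closed image, so that image is a closed irreducible subvariety of dimension $\dim A$, hence equals $A$. Thus $\phi$ is a surjective group homomorphism with finite kernel, i.e. an isogeny.

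So the real content is the homomorphism statement, which rests on the following \emph{Rigidity Lemma}: if $X$ is a complete variety, $Y$ and $Z$ are varieties, $f\colon X\times Y\to Z$ is a morphism, and there is a point $y_0\in Y$ with $f(X\times\{y_0\})$ a single point $z_0$, then $f$ factors through the projection $\operatorname{pr}_Y\colon X\times Y\to Y$. I would prove this as follows. Choose an affine open neighbourhood $U$ of $z_0$ in $Z$ and put $W:=f^{-1}(Z\setminus U)$, a closed subset of $X\times Y$. Since $X$ is complete, $\operatorname{pr}_Y$ is a closed map, so $\operatorname{pr}_Y(W)$ is closed in $Y$; it does not contain $y_0$, so $V:=Y\setminus\operatorname{pr}_Y(W)$ is a nonempty open subset containing $y_0$. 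For each $y\in V$, the set $f(X\times\{y\})$ lies in the affine variety $U$ and is the image of the complete connected variety $X$ under a morphism, hence is complete and connected inside an affine variety, hence a single point. Fixing any $x_0\in X$ and setting $g:=f\circ(x_0\times\id_Y)\colon Y\to Z$, the morphisms $f$ and $g\circ\operatorname{pr}_Y$ agree on the dense open $X\times V$ of the irreducible variety $X\times Y$, hence agree everywhere.

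To conclude, let $\psi\colon A\to B$ be a morphism of abelian varieties with $\psi(0_A)=0_B$ and consider
\[
f\colon A\times A\to B,\qquad f(x_1,x_2):=\psi(x_1+x_2)-\psi(x_1)-\psi(x_2).
\]
Then $f(A\times\{0\})=\{0\}$, so by the Rigidity Lemma (with $X=A$, which is complete) $f$ factors through the second projection; therefore $f(x_1,x_2)=f(0,x_2)=0$ identically, i.e. $\psi(x_1+x_2)=\psi(x_1)+\psi(x_2)$, so $\psi$ is a group homomorphism. I expect the main — and essentially only non-formal — obstacle to be the Rigidity Lemma, specifically the two places where completeness of $A$ is used: that $\operatorname{pr}_Y$ is a closed map, and that a complete connected subvariety of an affine variety reduces to a point. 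Everything else (the reduction to the pointed case, the uniqueness of $(T_y,\phi)$, and the isogeny claim) is routine bookkeeping.
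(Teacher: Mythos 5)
The paper does not supply its own proof of this fact: it cites it directly from \cite[Corollary~1.2]{Milne}, and your argument is precisely the standard one found there. Your reduction of the decomposition to the case of a pointed morphism (setting $y:=\psi(0)$ and $\phi:=T_{-y}\circ\psi$), your proof of the Rigidity Lemma using completeness of the first factor, its application to $f(x_1,x_2)=\psi(x_1+x_2)-\psi(x_1)-\psi(x_2)$, and the routine uniqueness and isogeny deductions are all correct; in effect you have filled in the proof the paper delegates to the cited source.
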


As a simple consequence of Fact~\ref{rigidity}, we obtain.

\begin{lemma}
\label{commuting endomorphisms}
Let $\psi_1,\psi_2:A\lra A$ be endomorphisms of the form $\psi_i:=T_{y_i}\circ \varphi_i$ 
(for $i=1,2$) where $\varphi_i:A\lra A$ are group endomorphisms. If 
$\psi_1\circ \psi_2 = \psi_2\circ \psi_1$, then 
$\varphi_1\circ \varphi_2 = \varphi_2\circ \varphi_1$.
\end{lemma}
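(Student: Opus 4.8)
The plan is to unwind the composition of the two endomorphisms explicitly using the Rigidity Theorem (Fact~\ref{rigidity}) and then invoke its uniqueness clause. First I would compute $\psi_1\circ\psi_2$: for $x\in A$ we have $\psi_2(x)=\varphi_2(x)+y_2$, hence
\[
\psi_1(\psi_2(x))=\varphi_1(\varphi_2(x)+y_2)+y_1=(\varphi_1\circ\varphi_2)(x)+\varphi_1(y_2)+y_1,
\]
using that $\varphi_1$ is a group homomorphism. Thus $\psi_1\circ\psi_2=T_{z_1}\circ(\varphi_1\circ\varphi_2)$ where $z_1=\varphi_1(y_2)+y_1$. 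Symmetrically, $\psi_2\circ\psi_1=T_{z_2}\circ(\varphi_2\circ\varphi_1)$ with $z_2=\varphi_2(y_1)+y_2$.

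Next I would observe that $\varphi_1\circ\varphi_2$ and $\varphi_2\circ\varphi_1$ are both algebraic group endomorphisms of $A$ (a composition of group homomorphisms is a group homomorphism), and that $T_{z_1},T_{z_2}$ are translations. So each of $\psi_1\circ\psi_2$ and $\psi_2\circ\psi_1$ is written in the canonical form "translation $\circ$ group endomorphism" from Fact~\ref{rigidity}. Since by hypothesis $\psi_1\circ\psi_2=\psi_2\circ\psi_1$ as endomorphisms of $A$, the uniqueness assertion in Fact~\ref{rigidity} forces the two decompositions to agree: $T_{z_1}=T_{z_2}$ and, crucially, $\varphi_1\circ\varphi_2=\varphi_2\circ\varphi_1$, which is exactly the claim.

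There is essentially no obstacle here; the only thing to be careful about is that the uniqueness in Fact~\ref{rigidity} genuinely applies, i.e.\ that we have presented $\psi_1\circ\psi_2$ as (translation)$\circ$(group endomorphism) and not merely as (group endomorphism)$\circ$(translation) or some mixed form — this is why it matters that $\varphi_1$ is applied to $y_2$ and the result absorbed into a single translation $T_{z_1}$ placed on the \emph{outside}. One could alternatively avoid even this by noting that $(\varphi_1\circ\varphi_2-\varphi_2\circ\varphi_1)(x)$ equals the constant $z_2-z_1$ for all $x\in A$, and a group endomorphism that is constant must be the zero map (evaluate at $0$), giving $\varphi_1\circ\varphi_2=\varphi_2\circ\varphi_1$ directly. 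Either route is a two-line argument, so I would simply write out the displayed computation above and cite the uniqueness in Fact~\ref{rigidity}.
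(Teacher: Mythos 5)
Your proof is correct and follows exactly the route the paper intends: the paper states the lemma as "a simple consequence of Fact~\ref{rigidity}" and gives no further proof, and your computation plus the uniqueness clause of the Rigidity Theorem (or, equivalently, your alternative evaluation-at-zero argument) is precisely that consequence.
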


The following result is an immediate application of the structure theorem for the ring of group endomorphisms of an abelian variety.
\begin{fact}
\label{inside a finitely generated group}
Let $S$ be a finitely generated commutative monoid of endomorphisms of an abelian variety $A$ as an algebraic variety. Then for each point $x\in A$, there exists a finitely generated subgroup $\Gamma\subset A$ containing $\OO_S(x)$.
\end{fact}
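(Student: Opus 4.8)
\textbf{Proof proposal for Fact~\ref{inside a finitely generated group}.}
The plan is to reduce to the structure of $\End(A)$ and then to bound, uniformly in $\psi\in S$, the contribution of the translation parts. First I would use Fact~\ref{rigidity} to write each generator $\gamma_j$ of $S$ (for $j=1,\dots,s$) as $T_{y_j}\circ\varphi_j$ with $\varphi_j\in\End(A)$ and $y_j\in A$. By Lemma~\ref{commuting endomorphisms} the group endomorphisms $\varphi_1,\dots,\varphi_s$ commute pairwise, so the subring $R\subseteq\End(A)$ they generate is a finitely generated commutative $\Z$-algebra; by Fact~\ref{minimal poly} each $\varphi_j$ is integral over $\Z$, hence $R$ is a finitely generated $\Z$-module. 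Composing the generators, an arbitrary element $\psi\in S$ has the form $\psi = T_z\circ\theta$ where $\theta$ is a monomial in the $\varphi_j$'s (hence $\theta\in R$) and, expanding the translations through the group homomorphisms, $z$ is a $\Z$-linear combination (with nonnegative, but that is irrelevant) of elements of the form $\rho(y_j)$ with $\rho$ ranging over monomials in the $\varphi_j$'s and $j\in\{1,\dots,s\}$.

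The key point is that, because $R$ is a finitely generated $\Z$-module, only finitely many such monomials $\rho$ are needed: if $\rho_1,\dots,\rho_m$ is a $\Z$-module generating set for $R$, then every monomial $\rho$ can be written as $\sum_i a_i\rho_i$ with $a_i\in\Z$, and therefore $\rho(y_j)\in\Gamma_0:=\big\langle\,\rho_i(y_j)\ :\ 1\le i\le m,\ 1\le j\le s\,\big\rangle$, the subgroup of $A(\Kbar)$ generated by those finitely many points. Consequently the translation part $z$ of any $\psi\in S$ lies in $\Gamma_0$. Now fix $x\in A(\Kbar)$ and set $\Gamma := \Gamma_0 + \big\langle\, \rho_i(x)\ :\ 1\le i\le m\,\big\rangle$, a finitely generated subgroup of $A(\Kbar)$. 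For $\psi = T_z\circ\theta\in S$ we have $\psi(x) = z + \theta(x)$; writing $\theta=\sum_i a_i\rho_i$ gives $\theta(x)=\sum_i a_i\rho_i(x)\in\Gamma$, and $z\in\Gamma_0\subseteq\Gamma$, so $\psi(x)\in\Gamma$. Hence $\OO_S(x)\subseteq\Gamma$, as desired.

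The main obstacle is purely bookkeeping: one must track how translations propagate through compositions — namely that $(T_{y}\circ\varphi)\circ(T_{y'}\circ\varphi') = T_{y+\varphi(y')}\circ(\varphi\circ\varphi')$ — and then check that iterating this over an arbitrary word in the generators produces a translation vector that is a $\Z$-combination of points $\rho(y_j)$ with $\rho$ a monomial in the $\varphi_j$. Once that is in place, the finiteness of the $\Z$-module $R$ (guaranteed by Fact~\ref{minimal poly} together with commutativity) does all the work, since it collapses the a priori infinite set of monomials $\rho$ into a finite generating set. No genuine difficulty beyond this arises; in particular we do not need $S$ to be cancellative here, only finitely generated and commutative.
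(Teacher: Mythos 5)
Your proposal is correct and takes essentially the same approach as the paper: decompose each generator via Fact~\ref{rigidity} as a translation composed with a group endomorphism, invoke Fact~\ref{minimal poly} (plus commutativity, via Lemma~\ref{commuting endomorphisms}) to see that the $\Z$-algebra of group endomorphisms involved is a finitely generated $\Z$-module, and then observe that the translation and linear parts of every word in the generators land in a fixed finitely generated subgroup. The only cosmetic difference is that the paper names its generating set directly as $\gamma(x),\gamma(y_1),\dots,\gamma(y_s)$ with $\gamma$ ranging over the compositions $\gamma_1^{m_1}\cdots\gamma_s^{m_s}$ with $0\le m_i<2\dim A$, whereas you pass to a $\Z$-module generating set $\rho_1,\dots,\rho_m$ of the ring $R$; both are instances of the same finiteness argument.
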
 

\begin{proof}
Let $\{\sigma_1,\dots, \sigma_s\}$ be a set of generators for $S$. For each $i=1,\dots, s$, we let $\gamma_i:=T_{y_i}\circ \tau_i$ for some translations $T_{y_i}$ (where $y_i\in A$) and some group endomorphisms $\tau_i$. Let $d:=\dim(A)$. Then, by Fact~\ref{minimal poly}, for each $i=1,\dots, s$, there exist integers $c_{i,j}$ such that
$$\tau_i^{2d}+c_{i,2d-1}\tau_i^{2d-1}+\cdots + c_{i,1}\tau_i +c_{i,0}\cdot \id=0,$$
where $\id$ always represents the identity map.  
Then $\OO_S(x)$ is contained in the subgroup $\Gamma\subset A$ generated by $\gamma(x), \gamma(y_1),\dots, \gamma(y_s)$, where $\gamma$ varies among the finitely many elements of $S$ of the form $\gamma:=\gamma_1^{m_1}\circ \cdots \gamma_s^{m_s}$, 
with $0\le m_i<2d$, for each $i=1,\dots, s$.
\end{proof}

The next result is a relatively simple application of Fact~\ref{Poincare}.
\begin{lemma}
\label{subabelian}
Let $B\subseteq A$ be an algebraic subgroup of the abelian variety $A$. Then $B\ne A$ if and 
only there exists a nonzero algebraic group endomorphism $\psi:A\lra A$ such that 
$\psi(B)=\{0\}$.
\end{lemma}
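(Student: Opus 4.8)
The plan is to prove the two implications of Lemma~\ref{subabelian} separately. The reverse implication is immediate: if $\psi$ is a nonzero algebraic group endomorphism of $A$ with $\psi(B)=\{0\}$, then $B=A$ would force $\psi(A)=\psi(B)=\{0\}$, i.e.\ $\psi=0$, contrary to hypothesis; hence $B\ne A$. So the content is in the forward direction, for which the key input is Poincar\'e's Reducibility Theorem (Fact~\ref{Poincare}).

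For the forward direction, assume $B\ne A$. First I would reduce to the case in which $B$ is connected: since $B/B^0$ is finite we have $NB\subseteq B^0$ for some positive integer $N$, and if we can produce a nonzero algebraic group endomorphism $\psi_0:A\lra A$ with $\psi_0(B^0)=\{0\}$, then $\psi:=\psi_0\circ [N]$ is again an algebraic group endomorphism, it satisfies $\psi(B)=\psi_0(NB)\subseteq \psi_0(B^0)=\{0\}$, and it is still nonzero because $[N]:A\lra A$ is surjective, so $\psi(A)=\psi_0(A)\ne\{0\}$. Note also that $B\ne A$ implies $B^0\ne A$, so $B^0$ is a proper abelian subvariety of $A$.

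To construct $\psi_0$: apply Fact~\ref{Poincare} to the abelian subvariety $B^0\subseteq A$ to obtain an abelian subvariety $C\subseteq A$ with $A=B^0+C$, with $B^0\cap C$ finite, and with $A/B^0$ isogenous to $C$. Since $\dim C=\dim A-\dim B^0>0$, we have $C\ne\{0\}$. Let $q:A\lra A/B^0$ be the quotient map (which kills $B^0$), let $\iota_C:C\hookrightarrow A$ be the inclusion, and choose an isogeny $\lambda:A/B^0\lra C$. Then $\psi_0:=\iota_C\circ\lambda\circ q$ is an algebraic group endomorphism of $A$ with $\psi_0(B^0)=\{0\}$ and $\psi_0(A)=\iota_C(\lambda(A/B^0))=C\ne\{0\}$, so $\psi_0\ne 0$, as required.

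The only mild subtlety is producing the isogeny $\lambda:A/B^0\lra C$, which runs opposite to the natural isogeny $C\hookrightarrow A\lra A/B^0$ supplied by the decomposition $A=B^0+C$ (this composition is surjective with kernel $B^0\cap C$, hence an isogeny). This is handled either by invoking the fact that every isogeny of abelian varieties has a complementary isogeny whose composite with it is multiplication by an integer, or more simply by appealing directly to the symmetry of the isogeny relation already recorded in Fact~\ref{Poincare}. I do not expect any genuine obstacle here; the proof is a short application of Poincar\'e Reducibility together with the elementary bookkeeping needed to pass from $B^0$ back to $B$.
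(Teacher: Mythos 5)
Your proof is correct and follows essentially the same route as the paper: reduce to the connected component $B^0$ by composing with $[N]$, then use Poincar\'e Reducibility (Fact~\ref{Poincare}) to build $\psi$ as the composition of the quotient $A\lra A/B^0$, an isogeny $A/B^0\lra C$, and the inclusion $C\hookrightarrow A$. Your extra remark about reversing the natural isogeny $C\lra A/B^0$ is a point the paper passes over silently by citing Fact~\ref{Poincare} directly, but it is a legitimate step and you handle it correctly.
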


\begin{proof}
Clearly, if $B=A$, then there exists no nonzero endomorphism $\psi$ of $A$ such that 
$\psi(B)=\{0\}$. Now, assume $B\ne A$. We note that it suffices to prove the existence of 
$\psi\in\End(A)$ such that $B^0\subseteq \ker(\psi)$, where $B^0$ is the connected component of 
$B$ containing $0$ for if $B^0 \subseteq \ker(\psi)$ and $N := [B:B^0]$ is the index of 
$B^0$ in $B$, then $B \subseteq \ker(\phi)$ where $\phi = [N] \cdot \psi$.  
So, from now on assume $B$ is an abelian subvariety of $A$. We consider $\pi:A\lra A/B$ be the 
canonical quotient map. By Fact~\ref{Poincare}, we obtain that there exists an abelian 
subvariety $C\subseteq A$ and an isogeny $\tau:A/B\lra C$. So, letting $\iota:C\lra A$ be the 
canonical injection map, we get that $\psi:=\iota\circ \tau\circ \pi:A\lra A$ is an endomorphism 
with the property that  $\psi(B)=\{0\}$. We claim that $\psi\ne 0$. Indeed, by construction, the 
image of $\psi$ is $C$ which is a positive dimensional variety (since $B\ne A$).
\end{proof}

The following result is the famous consequence of Mordell-Lang Conjecture proven by 
Faltings~\cite{Faltings}.
\begin{fact}[Faltings' theorem; Mordell-Lang Conjecture]
\label{Faltings theorem}
Let $V\subset A$ be an irreducible subvariety with the property that there exists a finitely 
generated subgroup $\Gamma\subseteq A(\Kbar)$ such that $V(\Kbar)\cap\Gamma$ is Zariski dense in 
$V$. Then $V$ is a coset of an abelian subvariety of $A$.
\end{fact}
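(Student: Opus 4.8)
The final statement is the Mordell--Lang conjecture for abelian varieties, and I would not attempt an independent argument for it; rather I would present the Vojta--Faltings method, and the following is the plan of that proof. \textbf{Reduction and setup.} First I would reduce to the case in which $\Stab(V):=\{a\in A: a+V=V\}$ is finite. If $B:=\Stab(V)^0$ is positive-dimensional, then $V$ is the preimage under the quotient $\pi:A\to A/B$ of an irreducible subvariety $\bar V\subseteq A/B$; the image $\pi(\Gamma)$ is again finitely generated, $\bar V(\Kbar)\cap\pi(\Gamma)$ is Zariski dense in $\bar V$, and by induction on $\dim A$ one concludes that $\bar V$, hence $V$, is a coset of an abelian subvariety. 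So it suffices to prove: if $\Stab(V)$ is finite and $\dim V\ge 1$, then $V(\Kbar)\cap\Gamma$ is \emph{not} Zariski dense in $V$. Enlarging $\Gamma$, we may assume it contains $A_{\tor}$, and we fix a symmetric ample line bundle $L$ on $A$, giving the N\'eron--Tate height $\hat h$ on $A(\Kbar)$ together with the induced positive-definite quadratic form on $\Gamma\otimes\R$.

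\textbf{The gap principle.} Suppose for contradiction that $V(\Kbar)\cap\Gamma$ is Zariski dense. Since $\Gamma\otimes\R$ is finite-dimensional, a pigeonhole/compactness argument (the abelian analogue of Mumford's gap principle) produces, for every $m$ and every $\varepsilon>0$, points $P_1,\dots,P_m\in V(\Kbar)\cap\Gamma$ such that $(P_1,\dots,P_m)$ avoids any prescribed proper closed subset of $V^m$, the heights $h_i:=\hat h(P_i)$ grow faster than any fixed geometric rate, and the angle between consecutive $P_i$ in $\Gamma\otimes\R$ is $<\varepsilon$. This is the only step that uses the finite-rank hypothesis on $\Gamma$.

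\textbf{Auxiliary section, arithmetic estimate, and descent.} Working on $X:=V^m\hookrightarrow A^m$, I would apply Siegel's lemma in its Arakelov form (Bombieri--Vaaler, or Faltings' arithmetic Hilbert-polynomial bounds) to build, for multidegrees $\mathbf d=(d_1,\dots,d_m)$ with each $d_i/d_{i+1}$ large, a nonzero global section $s$ of $\mathcal O_{A^m}\big(\sum d_i\,\mathrm{pr}_i^*L\big)\big|_X$ of controlled Arakelov size and with weighted vanishing index $\ge\sigma>0$ at $(P_1,\dots,P_m)$. The height machine, combined with the near-parallelism of the $P_i$, forces the value of $s$ (or of a low-order derivative) at that point to be so small that a Liouville-type bound for a nonzero algebraic number is violated --- \emph{unless} $s$ vanishes to even higher order. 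That last possibility is excluded by Faltings' Product Theorem: a section of $\mathcal O(\mathbf d)$ on a product of projective varieties with index $\ge\sigma$ at a point and $d_i$ decreasing fast enough must vanish to large index along a nonzero product subvariety $Z=Z_1\times\cdots\times Z_m\subsetneq X$ through the point. Each $Z_j$ is then a proper subvariety of $A$ meeting $\Gamma$ densely, so by induction on dimension $Z_j$ is a coset of an abelian subvariety; hence $(P_1,\dots,P_m)$ lies on a product of cosets inside $V^m$, contradicting the genericity arranged in the gap principle once $m$ is large and $\Stab(V)$ is finite. This contradiction completes the proof.

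\textbf{Main obstacle.} The genuinely hard ingredients, which I would cite rather than reprove, are (a) Faltings' Product Theorem --- a sweeping strengthening of the Dyson and Roth zero estimates --- and (b) the precise Arakelov intersection-theoretic bookkeeping that makes the ``small value versus bounded denominator'' clash quantitative and uniform as $m\to\infty$; Vojta's general inequality on (semi)abelian varieties repackages the same difficulty.
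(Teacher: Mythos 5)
The paper records this statement as a \emph{Fact} and attributes it to Faltings~\cite{Faltings} without supplying a proof, so there is no argument in the paper to compare against; you correctly recognize it as the Mordell--Lang theorem for abelian varieties and, like the paper, defer to Faltings. Your outline of the Vojta--Faltings machinery (reduction to trivial connected stabilizer, Mumford-style gap principle in $\Gamma\otimes\R$, Arakelov--Siegel construction of a small auxiliary section with prescribed index, the height clash, and the Product Theorem descent) is an accurate high-level account of the cited proof; the one place you gloss over a genuine subtlety is the ``Liouville versus small value'' step, which in the actual argument requires evaluating a derivative of order just above the constructed vanishing index rather than the section itself, and the final contradiction additionally relies on the Ueno/Kawamata structure theorem to guarantee that the locus of positive-dimensional subgroup translates inside $V$ is a proper closed subset when $\Stab(V)^0$ is trivial, so that the points produced by the gap principle can be taken to avoid it. Neither of these affects correctness at the level of a sketch that explicitly cites the Product Theorem and Arakelov bookkeeping as black boxes.
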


We will also employ the following easy result.
\begin{lemma}
\label{one multiple is enough}
Let $A$ be an abelian variety. If $x\in A$ is a point generating a cyclic group which is Zariski dense in $A$, then for each positive integer $\ell$, the cyclic group generated by $\ell x$ is Zariski dense in $A$. 
\end{lemma}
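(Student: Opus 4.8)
The plan is to reduce the statement about $\ell x$ generating a Zariski dense cyclic subgroup to the hypothesis that $x$ does. Let $\overline{\langle x\rangle}$ denote the Zariski closure of the cyclic group generated by $x$; this is an algebraic subgroup of $A$, and by hypothesis it equals $A$. Similarly let $H = \overline{\langle \ell x\rangle}$, which is an algebraic subgroup of $A$ contained in $\overline{\langle x\rangle} = A$. I want to show $H = A$. The key observation is that multiplication by $\ell$, the isogeny $[\ell]:A\to A$, is a surjective homomorphism with finite kernel, and it carries the cyclic group $\langle x\rangle$ onto the cyclic group $\langle \ell x\rangle$. Since $[\ell]$ is a morphism of varieties (indeed a finite surjective morphism), it maps the Zariski closure of $\langle x\rangle$ into the Zariski closure of its image, so $[\ell](A) = [\ell](\overline{\langle x\rangle}) \subseteq H$. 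But $[\ell]$ is surjective, so $A = [\ell](A) \subseteq H \subseteq A$, forcing $H = A$, which is exactly the claim.

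More carefully, I would argue as follows. First I would recall that for any morphism of varieties $g:X\to Y$ and any subset $Z\subseteq X$, one has $g(\overline{Z}) \subseteq \overline{g(Z)}$, since $g^{-1}(\overline{g(Z)})$ is a closed set containing $Z$ hence containing $\overline{Z}$. Apply this with $g = [\ell]:A\to A$ and $Z = \langle x\rangle$: then $g(Z) = \langle \ell x\rangle$ (because $[\ell]$ is a group homomorphism, so $[\ell](nx) = n(\ell x)$ for all $n\in\Z$), and thus $[\ell]\bigl(\overline{\langle x\rangle}\bigr) \subseteq \overline{\langle \ell x\rangle} = H$. By hypothesis $\overline{\langle x\rangle} = A$, so $[\ell](A)\subseteq H$. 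Since $[\ell]:A\to A$ is an isogeny, it is surjective, so $[\ell](A) = A$, and therefore $H = A$, i.e. the cyclic group generated by $\ell x$ is Zariski dense in $A$.

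There is really no hard part here; the only thing one might pause over is confirming that the Zariski closure of a subgroup of $A$ is a subgroup (an algebraic subgroup, in fact), which is standard, and that $[\ell]$ is surjective, which holds because $[\ell]$ is a nonzero isogeny of an abelian variety. No appeal to Faltings, Poincaré reducibility, or any of the structure theory is needed; the statement is purely a consequence of $[\ell]$ being a surjective algebraic group homomorphism together with continuity of morphisms for the Zariski topology.
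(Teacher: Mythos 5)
Your proof is correct, but it takes a genuinely different route from the paper. The paper observes that $\langle x\rangle = \bigcup_{i=0}^{\ell-1}\bigl(ix + \langle \ell x\rangle\bigr)$, so taking Zariski closures gives $A = \bigcup_{i=0}^{\ell-1}(ix + H)$ where $H = \overline{\langle \ell x\rangle}$; since $A$ is irreducible and the cosets $ix+H$ are closed and pairwise homeomorphic, one of them (hence $H$ itself, being a translate) must be all of $A$. You instead push the closure forward along the surjective isogeny $[\ell]$, using $[\ell](\overline{Z})\subseteq\overline{[\ell](Z)}$ together with $[\ell](A)=A$. Both arguments are short and elementary; yours avoids any explicit appeal to irreducibility or the coset decomposition, relying instead on the standard fact that $[\ell]$ is surjective on an abelian variety. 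The paper's coset argument is slightly more self-contained topologically, while yours is perhaps more conceptual and generalizes immediately to any surjective endomorphism in place of $[\ell]$. Either is a perfectly acceptable proof of the lemma.
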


\begin{proof}
Let $H$ be the Zariski closure of the cyclic group generated by $\ell x$; then $H$ is an algebraic subgroup of $A$. Furthermore, because the cyclic group generated by $x$ is Zariski dense in $A$, then 
$$A=\bigcup_{i=0}^{\ell-1} \left(ix + H\right).$$
Since $A$ is connected, we conclude that $H=A$, as desired.
\end{proof}

Finally, for any simple abelian variety $A$ defined over a field $K$ of characteristic $0$, the action of $\Gal(\Kbar/K)$ on $A_{\tor}$ yields the following result.
\begin{fact}
\label{torsion extensions}
The group $\Gal(K(A_{\tor})/K)$ embeds into $\GL_{2d}(\hat{\Z})$, where $d=\dim(A)$ and $\hat{\Z}$ is 
the ring of finite ad\'eles.
\end{fact}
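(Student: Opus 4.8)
The plan is to build the action of the absolute Galois group on the adelic Tate module of $A$ and to identify its kernel. First I would recall that since $A$ is an abelian variety of dimension $d$ over a field of characteristic $0$, for every positive integer $n$ the $n$-torsion subgroup $A[n]$ is a free $\Z/n\Z$-module of rank $2d$; fixing an isomorphism $A[n]\cong(\Z/n\Z)^{2d}$ identifies the automorphism group of the abstract group $A[n]$ with $\GL_{2d}(\Z/n\Z)$. Because $A$, its group law, its inversion map and its identity section are all defined over $K$, the group $\Gal(\Kbar/K)$ acts on each $A[n]$ through group automorphisms, which gives homomorphisms $\rho_n:\Gal(\Kbar/K)\lra \Aut(A[n])\cong \GL_{2d}(\Z/n\Z)$.

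Next I would observe that these representations are compatible with one another: for $n\mid m$, multiplication by $m/n$ maps $A[m]$ onto $A[n]$ and commutes with the Galois action, and the identifications $A[n]\cong(\Z/n\Z)^{2d}$ can be chosen simultaneously so as to be compatible with these transition maps, equivalently by fixing a basis of the full adelic Tate module $\widehat{T}(A):=\varprojlim_n A[n]$ as a free $\hat{\Z}$-module of rank $2d$. Passing to the inverse limit over $n$ then yields a single homomorphism
$$\rho:\Gal(\Kbar/K)\lra \varprojlim_n \GL_{2d}(\Z/n\Z)=\GL_{2d}\!\left(\varprojlim_n \Z/n\Z\right)=\GL_{2d}(\hat{\Z}),$$
the middle identification being valid because forming $\GL_{2d}$ of a ring commutes with the projective limits involved.

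Finally I would identify the kernel of $\rho$. An element $\sigma\in\Gal(\Kbar/K)$ lies in $\ker\rho$ exactly when $\sigma$ acts trivially on $A[n]$ for every $n$, that is, when $\sigma$ fixes every torsion point of $A$, i.e. when $\sigma\in\Gal(\Kbar/K(A_{\tor}))$ (recall $A_{\tor}=\bigcup_n A[n]$ and $K(A_{\tor})=\bigcup_n K(A[n])$). Therefore $\rho$ factors through an \emph{injective} homomorphism $\Gal(K(A_{\tor})/K)=\Gal(\Kbar/K)\big/\Gal(\Kbar/K(A_{\tor}))\hookrightarrow \GL_{2d}(\hat{\Z})$, which is the assertion. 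There is no substantial obstacle in this argument; the only point needing mild care is arranging the identifications $A[n]\cong(\Z/n\Z)^{2d}$ compatibly, which is disposed of once and for all by choosing a $\hat{\Z}$-basis of $\widehat{T}(A)$, and it is precisely the freeness of $\widehat{T}(A)$ — i.e. the fact that $A[n]$ has order exactly $n^{2d}$ for all $n$ — that uses the characteristic-$0$ hypothesis. (Note that simplicity of $A$ plays no role here.)
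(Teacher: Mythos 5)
The paper states this as a known Fact without proof (it is a standard consequence of the Galois action on the adelic Tate module), and your argument is precisely the canonical one: identify $A[n]\cong(\Z/n\Z)^{2d}$ compatibly via a $\hat{\Z}$-basis of $\varprojlim_n A[n]$, take the inverse limit of the mod-$n$ representations, and observe that the kernel is exactly $\Gal(\Kbar/K(A_{\tor}))$. Your proof is correct, and your parenthetical remark that simplicity of $A$ is not used is also accurate (the paper happens to introduce this Fact in the context of simple abelian varieties, but the statement and its proof hold for any abelian variety in characteristic $0$).
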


%%%%%%%%%%%%%%%%%%%%%%%%%%%%%%%%%%%%%%%%%%%%%%%%%%%%%%%%%%%%%%%%%%%%
%%%%%%%%%%%%%%%%%%%%%%%%%%%%%%%%%%%%%%%%%%%%%%%%%%%%%%%%%%%%%%%%%%%%

\section{Reductions}
\label{reductions}

Next we proceed with several  preliminary results used later in the proof of 
Theorem~\ref{abelian varieties}. The following result was proven in the case of a cyclic group 
$S$ of automorphisms in \cite{BRS}; we thank Jason Bell for pointing out how to extend the 
result from \cite{BRS} to our setting.

\begin{lemma}
\label{a power suffices 2}
It suffices to prove Theorem~\ref{general theorem} for a submonoid of $S$ spanned by iterates of 
each of the generators of $S$.
\end{lemma}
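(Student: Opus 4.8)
The plan is to show that if Theorem~\ref{general theorem} holds for a suitable "thickening" of the submonoid spanned by iterates of the generators, then it holds for $S$ itself. Concretely, write $S = \langle \gamma_1,\dots,\gamma_s\rangle$ and fix a positive integer $n$; let $T_0 := \langle \gamma_1^n,\dots,\gamma_s^n\rangle$ and, in the notation of Definition~\ref{barT}, let $T := \bar{T_0}$. I claim that $\bar{T} = S$: indeed $T$ is a submonoid of $S$ containing each $\gamma_i^n$, and for any $\gamma_i$ we have $\gamma_i\cdot\gamma_i^{n-1} = \gamma_i^n \in T$, so $\gamma_i\in\bar{T}$; since the $\gamma_i$ generate $S$ this forces $\bar{T}=S$. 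The first step is therefore to record that it suffices to prove Theorem~\ref{general theorem} for monoids of the form $\bar{T_0}$ with $T_0$ spanned by iterates of the generators — this is slightly stronger than the statement as literally phrased, but it is what the argument actually delivers, and one checks that $\bar{T_0}$ is again finitely generated and commutative (finite generation uses that $S$ is finitely generated, hence $\bar{T_0}\subseteq S$ is contained in a finitely generated commutative monoid, which by standard results on finitely generated commutative monoids — e.g. via the associated affine semigroup in $\Z^s$ as in the proof of Lemma~\ref{T bar T} — is itself finitely generated).

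The heart of the matter is then the reduction from $S$ to $T := \bar{T_0}$. Suppose Theorem~\ref{general theorem} holds for $T$. If there is no non-constant rational map $f:A\lra\bP^1$ with $f\circ\sigma = f$ for all $\sigma\in T$, then the conclusion applied to $T$ gives $x\in A(\Kbar)$ with $\OO_T(x)$ Zariski dense in $A$; since $T\subseteq S$, \emph{a fortiori} $\OO_S(x)$ is Zariski dense, and we are done. So the real content is the contrapositive direction: I must show that if some non-constant $f:A\lra\bP^1$ is fixed by every element of $T$, then some (possibly different) non-constant rational map $g:A\lra\bP^1$ is fixed by every element of $S$. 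The second step is thus to produce, from an $f$ fixed by all of $T = \bar{T_0}$, a $g$ fixed by all of $S$.

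For that step the idea is to symmetrize over a finite set. Since each $\gamma_i^n\in T$ fixes $f$, and more is true — every element of the form $\gamma_1^{m_1}\cdots\gamma_s^{m_s}$ with $n\mid m_i$ lies in $T$ and fixes $f$ — I want to average $f$ over the finitely many "residues": consider the rational maps $f_{\vec j} := f\circ \gamma_1^{j_1}\circ\cdots\circ\gamma_s^{j_s}$ for $0\le j_i < n$, and build $g$ from the collection $\{f_{\vec j}\}$ in a way that is symmetric under precomposition by each $\gamma_i$. Precomposing $f_{\vec j}$ by $\gamma_i$ raises $j_i$ by one; using commutativity of $S$ and the fact that $f$ is invariant under $\gamma_i^n$ (so the index $j_i$ only matters mod $n$), precomposition by $\gamma_i$ permutes the set $\{f_{\vec j}\}$. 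Hence any symmetric function of the $f_{\vec j}$ that separates points — e.g. pass to the quotient of $A$ by the finite-index subgroup-theoretic data these maps cut out, or more concretely take the map $A\lra (\bP^1)^{n^s}/\mathfrak{S}$ given by the unordered tuple $(f_{\vec j})$, which factors through a curve-valued or at least positive-dimensional target and yields after composition with a suitable projection the desired $g:A\lra\bP^1$ — is $S$-invariant and non-constant (non-constancy because $f_{\vec 0} = f$ is non-constant and the $\gamma_i$ are dominant, so the $f_{\vec j}$ are non-constant, hence the symmetrization cannot collapse to a point). Checking that the symmetrized object really gives a non-constant map to $\bP^1$ rather than merely to some higher-dimensional variety is the point requiring a little care: one invokes that a dominant rational map from $A$ to a positive-dimensional variety can be composed with a non-constant rational function on the target.

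The main obstacle I anticipate is precisely this last non-constancy/target-dimension bookkeeping in the symmetrization: ensuring the $S$-invariant map built from $\{f_{\vec j}\}$ is genuinely non-constant and can be pushed down to $\bP^1$. A clean way to sidestep the explicit symmetric-function construction is to argue at the level of fibrations: the fibration defined by $f$ is preserved by $T = \bar{T_0}$; let $\mathcal{F}$ be the fibration whose fibers are the connected components of intersections $\bigcap_{\vec j}$ of fibers of the $f_{\vec j}$. Each $\gamma_i$ permutes the $f_{\vec j}$ (modulo the $\gamma_i^n$-invariance of $f$), hence preserves $\mathcal{F}$; and $\mathcal{F}$ is non-constant because it refines the non-constant fibration of $f$ only by finitely many others, so its generic fiber still has positive codimension. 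Then take $g$ to be any non-constant rational function constant on the fibers of $\mathcal{F}$. This reformulation makes the finite generation and commutativity of $S$ do exactly the work needed (finitely many $f_{\vec j}$, permuted by the generators), and it is the version I would write up.
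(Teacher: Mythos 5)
Your first version --- parametrize the "residues" $f_{\vec j}:=f\circ\gamma_1^{j_1}\cdots\gamma_s^{j_s}$ with $0\le j_i<n$, observe that precomposition by each $\gamma_i$ permutes this finite family (using commutativity and $f\circ\gamma_i^n=f$), and pass to symmetric functions of the $f_{\vec j}$ --- is exactly the paper's argument. The paper works with elementary symmetric functions $g_i:=s_i(f\circ\sigma_1,\dots,f\circ\sigma_m)$ over coset representatives $\sigma_1,\dots,\sigma_m$ of $S'$ in $S$, and settles non-constancy cleanly: if every $g_i$ were constant, then $f$ would be a root of the monic polynomial $X^m-g_1X^{m-1}+\cdots+(-1)^mg_m$ with constant coefficients, forcing $f$ constant. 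Your "unordered-tuple cannot collapse to a point" reasoning is the same fact phrased geometrically, but the polynomial argument is the crisp way to close it, and you should say it rather than gesture at "composition with a suitable projection."

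Two caveats. First, the detour through $\bar{T_0}$ is unneeded: the permutation of $\{f_{\vec j}\}$ uses only that $f$ is invariant under each $\gamma_i^n$, i.e.\ under $T_0$ itself, which is what the lemma asks for; reducing to $\bar{T_0}$ in this lemma just mixes in the content of Lemma~\ref{T and barT}, which the paper keeps separate. Second, and more importantly, the "clean way to sidestep" that you say you would actually write up has a genuine gap. Defining $\mathcal F$ by the common level sets of the $f_{\vec j}$, each $\gamma_i$ only \emph{permutes} the fibers of $\mathcal F$: if $x$ lies in the fiber $\{f_{\vec j}=c_{\vec j}\}_{\vec j}$ then $\gamma_i(x)$ lies in the fiber $\{f_{\vec j}=c_{\rho_i(\vec j)}\}_{\vec j}$ for the index permutation $\rho_i$, which is generally a different fiber. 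A rational function $g$ constant on the fibers of $\mathcal F$ then factors through the space of fibers, on which $\gamma_i$ induces a nontrivial automorphism, so $g\circ\gamma_i\ne g$ in general. You would be forced to symmetrize $g$ over that finite automorphism action --- precisely the step the fibration language was meant to avoid. So the fibration reformulation does not supplant the symmetric-function computation; stick with your first version (which is the paper's).
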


\begin{proof}
We consider a finite generating set $\cU:=\{\gamma_1,\dots, \gamma_s\}$ for the monoid $S$. We 
assume $S$ does not fix a non-constant fibration of $A$ (otherwise Theorem~\ref{general theorem} 
holds).   
We let $S'$ be the submonoid of $S$ spanned by the endomorphisms in 
$\cU' :=\{\gamma_1^{m_1},\dots, \gamma_s^{m_s}\}$ (for some positive integers $m_i$). We assume 
Theorem~\ref{general theorem} holds for $S'$. If also $S'$ does not fix a non-constant 
fibration, then there exists $x\in A(\Kbar)$ such that the $S'$-orbit of $x$ is Zariski dense in 
$A$; hence also $\OO_S(x)$ is Zariski dense in $A$. So, it remains to prove that $S'$ cannot fix 
a non-constant fibration if $S$ does not fix a non-constant fibration.

We assume $f\circ \gamma_i^{m_i}=f$ for some non-constant map $f:A\lra \bP^1$ (for each $i$). 
Let $\cS$ be a finite set of representatives for the cosets of $S'$ in $S$ (note that $S/S'$ is 
a finite group since it is a finite monoid in which each element is invertible); without loss of 
generality we assume the identity is part of $\cS$. Let $m:=|\cS|$, and let 
$\cS:=\{\sigma_1,\dots,\sigma_m\}$. 
Let $s_1, \ldots, s_m$ be the elementary symmetric functions $g_i:(\bP^1)^m \to \bP^1$
and let $g_i := s_i(f \circ \sigma_1, \ldots, f \circ \sigma_m)$ 
(for $i=1,\dots, m$)  Clearly, $\gamma_i$ preserves each 
fibration $g_j$; hence if one $g_j$ is non-constant, then we are done. If each $g_j$ is a 
constant, then we obtain a contradiction because $f=f\circ \id$ would be a root of the 
polynomial (with constant coefficients)
$$X^m-g_1X^{m-1}+g_2X^{m-2}+\cdots +(-1)^mg_m=0.$$
This completes the proof of Lemma~\ref{existence of a point 2}.
\end{proof}

\begin{lemma}
\label{T and barT}
With the notation as in Theorem~\ref{general theorem}, let $T$ be a submonoid of $S$ such that $\bar{T}=S$. If the conclusion of Theorem~\ref{general theorem} holds for $T$, then it holds for $S$.
\end{lemma}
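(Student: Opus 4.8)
The plan is to split according to which of the two alternatives in the conclusion of Theorem~\ref{general theorem} is known to hold for $T$. In the first case, suppose there exists $x\in A(\Kbar)$ with $\OO_T(x)$ Zariski dense in $A$. Since $T\subseteq S$ we have $\OO_T(x)\subseteq \OO_S(x)$, so $\OO_S(x)$ is a fortiori Zariski dense in $A$, and the conclusion holds for $S$ with the same point $x$. This case requires nothing beyond the inclusion $T\subseteq S$.

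In the second case, suppose there is a non-constant rational map $f:A\lra\bP^1$ with $f\circ\sigma=f$ for every $\sigma\in T$. I would then fix an arbitrary $\tau\in S$ and use the hypothesis $\bar{T}=S$ to choose $y,z\in T$ with $\tau\circ y=z$. Composing with $f$ and using $f\circ z=f$ and $f\circ y=f$ gives $(f\circ\tau)\circ y=f\circ z=f=f\circ y$. Now $y$, being a dominant endomorphism of the abelian variety $A$, is surjective: by the Rigidity Theorem (Fact~\ref{rigidity}) it is of the form $T_b\circ\phi$ with $\phi$ an isogeny, and both an isogeny and a translation are surjective. Hence $y$ induces an injection of function fields, so $(f\circ\tau)\circ y=f\circ y$ forces $f\circ\tau=f$. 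Since $\tau\in S$ was arbitrary, $f$ is a non-constant rational fibration preserved by all of $S$, and again the conclusion holds for $S$.

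The only point that is not completely formal is the cancellation step, which rests on the fact that a dominant endomorphism of an abelian variety is surjective (equivalently, pulls back rational functions injectively); this is exactly what lets us pass from $f\circ\tau\circ y=f\circ y$ to $f\circ\tau=f$. I do not expect to need finite generation of $S$ or Lemma~\ref{T bar T} here — those enter elsewhere — so this lemma should be short, with the surjectivity of $y$ the sole ingredient requiring the abelian-variety structure.
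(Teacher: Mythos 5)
Your proposal is correct, and its overall structure mirrors the paper's: in both, the nontrivial content is the passage from a $T$-invariant fibration to an $S$-invariant one, and both exploit $\bar{T}=S$ to write $\tau$ against a pair $y,z\in T$. The one genuine difference is in the cancellation step. The paper uses the \emph{commutativity} of $S$: from $\tau\circ y=z$ one gets $y\circ\tau=z$, and then $f\circ\tau=(f\circ y)\circ\tau=f\circ z=f$ directly, with no need to invert or cancel anything. You instead keep the composition on the right and cancel $y$ by invoking dominance of $y$ (injectivity of $y^*$ on function fields), which is also a legitimate hypothesis of Theorem~\ref{general theorem} but a different one. Your route is, if anything, slightly more robust (it would survive dropping commutativity, as long as all elements are dominant), whereas the paper's route is purely formal monoid bookkeeping once commutativity is in hand. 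Neither approach requires finite generation, as you correctly observe. Two small remarks: (1) the paper phrases its argument contrapositively (assume no $S$-invariant fibration, derive no $T$-invariant fibration) rather than by your direct case split, but the two are logically equivalent; (2) your surjectivity remark via Fact~\ref{rigidity} is more than you need — dominance alone already gives injectivity of the pullback on $K(A)$, which is all the cancellation requires.
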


\begin{proof}
We assume that there exists no non-constant fibration preserved by all elements of $S$, and it 
suffices to prove that there is also no non-constant fibration preserved by the elements of $T$. 
Assume, by contradiction that there exists $f:A\lra \bP^1$ such that $f\circ \psi=f$ for each 
$\psi\in T$. Now, let $\sigma\in S$; then there exist $\psi_1,\psi_2\in T$ such that 
$\gamma\psi_1=\psi_2$. So, using also that $S$ is commutative, we get
$$f\circ \gamma = f\circ \psi_1\circ \gamma = f\circ \psi_2 = f.$$
Hence $f$ must be constant, as desired.
\end{proof}

Combining Lemmas~\ref{a power suffices 2} and \ref{T and barT} we obtain the following reduction 
of Theorem~\ref{general theorem}.
\begin{lemma}
\label{a power suffices 3}
With the notation from Theorem~\ref{general theorem}, assume the monoid $S$ is generated by the 
maps $\gamma_1,\dots, \gamma_s$. Then it suffices to prove the conclusion of 
Theorem~\ref{general theorem} for a finitely generated submonoid $T$ of $S$ with the property 
that $\gamma_i^n\in \bar{T}$ for each $i=1,\dots, s$, for some positive integer $n$.
\end{lemma}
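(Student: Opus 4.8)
The plan is to simply chain together the two reductions already established, with Lemma~\ref{T bar T} from Section~\ref{section monoids} supplying the bridge. Start with the monoid $S$ generated by $\gamma_1,\dots,\gamma_s$, and assume as in Theorem~\ref{general theorem} that no non-constant rational fibration $f:A\lra\bP^1$ is preserved by every element of $S$; we must produce a point with Zariski dense $S$-orbit, and by hypothesis it is enough to do this for a suitable finitely generated submonoid.

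The first step is to invoke Lemma~\ref{a power suffices 2}: it suffices to prove Theorem~\ref{general theorem} for the submonoid $S'$ spanned by $\gamma_1^{m_1},\dots,\gamma_s^{m_s}$, for any choice of positive integers $m_i$ that we are free to pick later. (Recall that the monoid of dominant endomorphisms of $A$ is cancellative, as noted before Lemma~\ref{T bar T}, and commutative by hypothesis, so $S'$ is again a finitely generated cancellative commutative monoid.) The second step is to choose those exponents to arrange the divisibility condition $\gamma_i^n\in\bar T$. Given any finitely generated submonoid $T$ of $S'$ with $\bar T = S'$, Lemma~\ref{T bar T} (applied with $S'$ in place of $S$, whose generators are the $\gamma_i^{m_i}$) produces a finitely generated submonoid $T_0\subseteq T$ and a positive integer $N$ with $(\gamma_i^{m_i})^N = \gamma_i^{m_i N}\in\bar{T_0}$ for all $i$. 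Then $\bar{T_0}$ has the defining property that $x y = z$ with $y,z\in T_0$ forces $x\in\bar{T_0}$, and since $\overline{\overline{T_0}} = \overline{T_0}$ (proved inside the proof of Lemma~\ref{T bar T}), we get $\overline{T_0}$ is a submonoid of $S'$, hence of $S$, containing $\gamma_i^{m_i N}$ for each $i$; taking $n := m_i N$ — or more cleanly, first replacing the $m_i$ by a common value and absorbing $N$ — gives a single positive integer $n$ with $\gamma_i^n\in\overline{T_0}\subseteq\overline T$ for all $i$.

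The third step is to run the two reductions in sequence. By Lemma~\ref{T and barT}, since $\overline{T_0} = S'$ would be needed to conclude directly, but $\overline{T_0}$ need only satisfy $\overline{\overline{T_0}}=\overline{T_0}$; the cleanest route is: apply Lemma~\ref{T and barT} with the submonoid $T_0$ of the monoid $\overline{T_0}$ (here $\overline{T_0}$ plays the role of the ambient monoid, and $T_0$ satisfies $\bar{T_0}=\overline{T_0}$ trivially once we take closures relative to $\overline{T_0}$) to reduce Theorem~\ref{general theorem} for $\overline{T_0}$ to Theorem~\ref{general theorem} for $T_0$; then apply Lemma~\ref{a power suffices 2}'s conclusion together with Lemma~\ref{T and barT} again, now with ambient monoid $S'$ and submonoid $\overline{T_0}$ (which satisfies $\overline{\overline{T_0}}=\overline{T_0}$, but we need $\bar{\cdot}$ taken inside $S'$ to equal $S'$) — this is the only delicate point. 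In fact the cleanest formulation is to note that it suffices to prove the theorem for $T_0$: knowing it for $T_0$ gives it for $\overline{T_0}$ by Lemma~\ref{T and barT}, hence for every $\gamma_i^n$, and then the argument of Lemma~\ref{a power suffices 2} (symmetric functions of $f\circ\sigma$ over coset representatives) upgrades this to $S$.

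The main obstacle is bookkeeping rather than mathematics: one must be careful that the "finitely generated submonoid $T$ with $\gamma_i^n\in\bar T$" promised in the statement is exactly the $T_0$ produced by Lemma~\ref{T bar T}, and that the closure operation $\bar{\cdot}$ is taken consistently inside whichever ambient monoid ($S$, $S'$, or $\overline{T_0}$) is in play at each step — the identities $\overline{\overline{T_0}}=\overline{T_0}$ and the commutativity/cancellativity of $S$ are what make these match up. Once the ambient monoids are tracked correctly, no further input is needed beyond Lemmas~\ref{T bar T}, \ref{a power suffices 2}, and \ref{T and barT}.
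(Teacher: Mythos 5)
Your proposal misreads the lemma's quantification and, as a result, goes down a detour that never closes. The lemma asserts: given \emph{any} finitely generated submonoid $T\subseteq S$ with $\gamma_i^n\in\bar T$ for some $n$, if Theorem~\ref{general theorem} holds for $T$, then it holds for $S$. (This is exactly how it is invoked later: Lemma~\ref{T bar T} is used to manufacture a specific $U'$ satisfying this hypothesis, and then Lemma~\ref{a power suffices 3} is applied to that $U'$.) You instead treat $T$ as something to be produced, bring in Lemma~\ref{T bar T} to build a $T_0$, and start from a hypothesized $T\subseteq S'$ with $\bar T=S'$ that is nowhere given; Lemma~\ref{T bar T} plays no role in the proof of Lemma~\ref{a power suffices 3} itself. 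You then correctly notice the ``only delicate point'': applying Lemma~\ref{T and barT} with ambient monoid $S'$ would require the closure of $\bar{T_0}$ inside $S'$ to be all of $S'$, whereas you only know it contains the $\gamma_i^{m_iN}$. That gap is real, and it reflects the fact that the \emph{statements} of Lemma~\ref{a power suffices 2} and Lemma~\ref{T and barT} cannot simply be composed as black boxes here.

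What does close the argument is chaining the \emph{proofs} of those two lemmas, not their statements. Assume Theorem~\ref{general theorem} holds for the given $T$ and that $S$ preserves no non-constant fibration. If some non-constant $f:A\lra\bP^1$ satisfied $f\circ\psi=f$ for all $\psi\in T$, then for each $i$, writing $\gamma_i^n\psi_1=\psi_2$ with $\psi_1,\psi_2\in T$ and using commutativity (this is exactly the computation in the proof of Lemma~\ref{T and barT}) gives $f\circ\gamma_i^n=f\circ\psi_1\circ\gamma_i^n=f\circ\psi_2=f$. Hence $f$ is fixed by the submonoid $S'$ of $S$ generated by $\gamma_1^n,\dots,\gamma_s^n$, and the symmetric-function argument from the proof of Lemma~\ref{a power suffices 2} then produces a non-constant fibration fixed by all of $S$, a contradiction. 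Therefore no non-constant fibration is preserved by $T$, so Theorem~\ref{general theorem} for $T$ yields $x\in A(\Kbar)$ with $\OO_T(x)$ Zariski dense, and $\OO_T(x)\subseteq\OO_S(x)$ finishes the proof. Your final sentence gestures at essentially this, but the argument is not actually carried out, and the two preceding steps (constructing $T_0$ and trying to apply the lemmas as stated) are a dead end.
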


%%%%%%%%%%%%%%%%%%%%%%%%%%%%%%%%%%%%%%%%%%%%%%%%%%%%%%%%%%%%%%%%%%%%%%%%%%%%%%%
%%%%%%%%%%%%%%%%%%%%%%%%%%%%%%%%%%%%%%%%%%%%%%%%%%%%%%%%%%%%%%%%%%%%%%%%%%%%%%%

\section{Auxiliary results}
\label{necessary lemmas}

In this Section we present several technical results useful for our proof of Theorems~\ref{abelian varieties} and \ref{general theorem}. 

\begin{lemma}
\label{existence of a point 2}
Let $K$ be a finitely generated field of characteristic $0$, and let $\Kbar$ be an algebraic 
closure of $K$. 
Let $\psi_1, \dots, \psi_s:B\lra C$ be algebraic group morphisms of  abelian varieties defined 
over $K$, and let $y_1,\dots, y_s\in C(K)$. Then there exists $x\in B(\Kbar)$ such that for each 
$i=1,\dots, s$, the Zariski closure of the subgroup generated by $\psi_i(x)+y_i$ is the 
algebraic group generated by $\psi_i(B)$ and $y_i$.
\end{lemma}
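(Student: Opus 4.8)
The plan is to reduce successively: from general $s$ to the case $s=1$; from $s=1$ to the problem of finding a single point in a coset of an abelian subvariety that generates a Zariski dense cyclic subgroup of the ``expected'' algebraic group; and finally to a linear independence statement over the relevant endomorphism algebras, which is handled using the theorem of Frey and Jarden that a simple abelian variety over a finitely generated field of characteristic $0$ has infinite Mordell--Weil rank over $\Kbar$. For the first reduction, consider $\Phi:=(\psi_1,\dots,\psi_s)\colon B\to C^s$ and $Y:=(y_1,\dots,y_s)\in C^s(K)$, and suppose we can find $x\in B(\Kbar)$ with $\overline{\langle \Phi(x)+Y\rangle}$ equal to the algebraic group $\mathcal D$ generated by $\Phi(B)$ and $Y$. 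For the $i$-th projection $\pi_i\colon C^s\to C$ one has $\pi_i(\mathcal D)=\langle\psi_i(B),y_i\rangle$, while for any subgroup $\Gamma$ of an abelian variety $\pi_i(\overline\Gamma)=\overline{\pi_i(\Gamma)}$ (because $\overline\Gamma\subseteq\pi_i^{-1}(\overline{\pi_i(\Gamma)})$, the latter being closed). Applying $\pi_i$ to such an $x$ thus gives the conclusion for every $i$, so it suffices to treat $s=1$.

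For the second reduction, let $\psi\colon B\to C$ and $y\in C(K)$, put $B_0:=\psi(B)$ and let $D$ be the algebraic group generated by $B_0$ and $y$. Since $\psi$ is surjective onto $B_0$ over $\Kbar$, the possible values of $\psi(x)+y$ are exactly the points of the coset $B_0+y$, so we must produce $Q\in(B_0+y)(\Kbar)$ with $\overline{\langle Q\rangle}=D$. Let $n$ be the order of the class of $y$ in $D/D^0$. One checks that $D^0$ is generated by $B_0$ and $ny$, that $D^0/B_0$ is connected, and that $q(ny)$ generates a Zariski dense cyclic subgroup of $D^0/B_0$, where $q\colon D\to D/B_0$; moreover it is enough to find $Q_0\in(B_0+ny)(\Kbar)$ with $\overline{\langle Q_0\rangle}=D^0$, since then any $Q\in(B_0+y)(\Kbar)$ with $nQ=Q_0$ does the job. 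Hence we may assume that $D$ is a connected abelian variety, that $D/B_0$ is connected, and that $q(y)$ generates a dense cyclic subgroup of $D/B_0$.

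Now to produce the point: by Poincar\'e reducibility (Fact~\ref{Poincare}) write $D=B_0+B_1$ with $B_0\cap B_1$ finite, so $q|_{B_1}\colon B_1\to D/B_0$ is an isogeny, and pick $u\in B_1(\Kbar)$ with $q(u)=q(y)$. Then $u\in B_0+y$, and since $q|_{B_1}$ is finite and $q(u)$ generates a dense cyclic subgroup of $D/B_0$, we get $\overline{\langle u\rangle}=B_1$. We look for $Q_0=u+b$ with $b\in B_0(\Kbar)$. As $D$ is connected, $\overline{\langle u+b\rangle}\neq D$ would mean that $\overline{\langle u+b\rangle}^0$ is a proper abelian subvariety of $D$, which by Lemma~\ref{subabelian} forces $\phi(u+b)$ to be a torsion point for some nonzero $\phi\in\End(D)$. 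Thus it suffices to choose $b$ so that $\phi(u)+\phi(b)$ is non-torsion for every nonzero $\phi\in\End(D)$. Decomposing $D$ up to isogeny as $\prod_i C_i^{e_i}$ with the $C_i$ simple and pairwise non-isogenous, and using the structure of $\End(D)$ (so that $\End(D)\otimes\Q$ acts on $D(\Kbar)\otimes\Q$ as a product of matrix algebras on vector spaces $C_i(\Kbar)\otimes\Q$ over the division algebras $\End(C_i)\otimes\Q$), this condition becomes: in each factor $C_i$, the $e_i$ coordinates of $u+b$ are linearly independent over $\End(C_i)\otimes\Q$. The coordinates coming from $u$ are already independent because $\overline{\langle u\rangle}=B_1$, and since each $C_i(\Kbar)$ has infinite rank over $\End(C_i)$ by Frey--Jarden, the coordinates of $b$ can be chosen to complete these to a full independent system. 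This yields $b$, hence $Q_0$, and unwinding the reductions gives the desired $x$.

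The main obstacle is the last step. One must first translate ``$\overline{\langle u+b\rangle}=D$'' into a clean linear independence condition over the algebras $\End(C_i)\otimes\Q$ — here Lemma~\ref{subabelian} together with the structure theorem for $\End(D)$ is what makes this possible — and then feed in an input strictly stronger than the mere existence of a non-torsion point, namely the infinitude of the rank of $C_i(\Kbar)$: without this, the set of ``bad'' $b$, although only a countable union of proper cosets inside $B_0$, could a priori exhaust $B_0(\Kbar)$, so a purely cardinality-based argument fails. A lesser but genuine subtlety is the second reduction, where $D$ may be disconnected and one has to descend carefully to $D^0$.
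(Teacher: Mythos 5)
Your proof is correct, and it takes a genuinely different route from the paper. Three differences stand out. First, you dispose of the multiplicity $s$ at the very start by passing to the diagonal morphism $\Phi=(\psi_1,\dots,\psi_s)\colon B\to C^s$ and the single point $Y=(y_1,\dots,y_s)$, observing that the projection $\pi_i$ sends the algebraic group generated by $\Phi(B),Y$ to that generated by $\psi_i(B),y_i$ and commutes with Zariski closure of subgroups; the paper instead keeps all $s$ morphisms and does a coordinatewise recursive construction of $x_{i,j}$ across the simple factors of $B$, at the price of having to arrange $\cap_i\psi_i^{-1}(\{x_i\})\neq\emptyset$ by hand. Second, having reduced to $s=1$ and then to the case where the ``target group'' $D$ is connected, you use Poincar\'e reducibility to split the \emph{target} $D=B_0+B_1$ and translate the requirement $\overline{\langle u+b\rangle}=D$ (via Lemma~\ref{subabelian}) into a linear-independence condition for the coordinates of $u+b$ over the division algebras $\End(C_i)\otimes\Q$; the paper instead decomposes the \emph{source} $B$ into simple factors and phrases the genericity condition field-theoretically, as the coordinates $x_{i,j}$ not being rational, modulo torsion, over $K(C_{\tor},x_{i,1},\dots,x_{i,j-1})$. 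Third, the ultimate arithmetic input differs: you invoke the Frey--Jarden theorem that a positive-dimensional (simple) abelian variety over a finitely generated characteristic-$0$ field has infinite Mordell--Weil rank over $\Kbar$, whereas the paper invokes Thornhill's theorem that $K(C_{\tor})$ is Hilbertian and then manufactures a point not defined over any abelian extension of a bounded-degree compositum by means of an $S_n$-extension with $n$ large. Both inputs are genuine theorems; your route is shorter and the diagonal trick in particular is cleaner than the paper's coordinate bookkeeping. One small thing worth saying explicitly when you write this up: in the second reduction you need the surjectivity of $[n]$ on $B_0(\Kbar)$ to produce $Q$ in the \emph{correct} coset $B_0+y$ with $nQ=Q_0$, and in the final step the identification of $D$ with $\prod_i C_i^{e_i}$ is only up to isogeny, so the ``torsion'' and ``non-torsion'' conditions should be read in $D(\Kbar)\otimes\Q$, where they transport cleanly along isogenies.
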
 

\begin{proof}
We let $B=A_1+\cdots + A_m$ written as a sum of simple abelian varieties. 

Let $i=1,\dots, s$; then $\psi_i(B)$ equals the sum $\psi_i(A_1)+\cdots + \psi_i(A_m)$ (each 
algebraic group being either simple or trivial). We find an algebraic point $x_i\in \psi_i(B)$ 
such that the Zariski closure of the cyclic group generated by $x_i+y_i$ is the algebraic group 
generated by $\psi_i(B)$ and $y_i$; moreover we ensure that 
$$\cap_{i=1}^s \psi_i^{-1}(\{x_i\})$$ is nonempty (in $B$). We find $x_i$ as  a sum 
$x_{i,1}+\cdots + x_{i,m}$, where each $x_{i,j}\in \psi_i(A_j)$. If for some $j$ we have 
$\psi_i(A_j)=\{0\}$, we simply pick $x_{i,j}=0$. Then our goal is to construct the sequence 
$\{x_{i,j}\}$ such that for each $j=1,\dots, m$, the set 
\begin{equation}
\label{nonempty intersection}
\cap_{i=1}^s (\psi_i)|_{A_j}^{-1}\left(\{x_{i,j}\}\right)
\end{equation}
is nonempty (in $A_j$). Obviously when $\psi_i(A_j)=\{0\}$, we might as well disregard the set 
$$(\psi_i)|_{A_j}^{-1}(\{x_{i,j}\})=(\psi_i)|_{A_j}^{-1}(\{0\})=A_j$$
from the above intersection. Let now $j=1,\dots, m$ such that $\psi_i(A_j)$ is nontrivial. We will 
show that there exists $x_{i,j}\in \psi_i(A_j)$ such that for any positive integer $n$ we have
\begin{equation}
\label{necessary technical condition}
nx_{i,j}\notin \left(\psi_i(A_j)\right)\left(K\left(C_{\tor},  x_{i,1},\dots, 
x_{i,j-1}\right)\right).
\end{equation}

\begin{claim}
\label{one technical claim}
If the above condition \eqref{necessary technical condition} holds for each $j=1,\dots, m$ such that $\psi_i(A_j)\ne \{0\}$, then the Zariski closure of the cyclic group generated by 
$x_i+y_i$ is the algebraic subgroup $B_i$ generated by $\psi_i(B)$ and $y_i$.
\end{claim}

\begin{proof}[Proof of Claim~\ref{one technical claim}.]
Indeed, assume 
there exists some algebraic subgroup $D\subseteq C$ (not necessarily connected) such that 
$x_i+y_i\in D(\Kbar)$. Let $j\le m$ be the largest integer such that $x_{i,j}\ne 0$; then we have
$$x_{i,j}\in \left((-y_i-x_{i,1}-\cdots -x_{i,j-1})+D\right)\cap \psi_i(A_j).$$
Assume first that $\psi_i(A_j)\cap D$ is a proper algebraic subgroup of $\psi_i(A_j)$. Since 
$\psi_i(A_j)$ is a simple abelian variety, then $D\cap \psi_i(A_j)$ is a $0$-dimensional 
algebraic subgroup of $C$; hence there exists a nonzero integer $n$ such that 
$n\cdot (D\cap \psi_i(A_j))=\{0\}$. Then $nx_{i,j}$ is the only (geometric) point of the 
subvariety $n\cdot \left(\left((-y_i-x_{i,1}-\cdots -x_{i,j-1})+D\right)\cap \psi_i(A_j)\right)$ 
which is thus rational over $K(C_{\tor},x_{i,1},\dots, x_{i,j-1})$. But by our 
construction, $$nx_{i,j}\notin \psi_i(A_j)(K(C_{\tor}, x_{i,1},\dots, x_{i,j-1}))$$ 
which is a contradiction. Therefore $\psi_i(A_j)\subseteq D$ if $j$ is the largest index $\le m$ such that $x_{i,j}\ne 0$ (or equivalently, such that $\psi_i(A_j)\ne 0$).   So, $x_i+y_i\in D$ yields now $x_i'+y_i\in D$, where 
$x_i':=x_{i,1}+\cdots +x_{i,j-1}$. Repeating the exact same argument as above for the next positive integer $j_1<j$ for which $\psi_i(A_{j_1})\ne \{0\}$, and then arguing inductively we obtain that 
each $\psi_i(A_j)$ is contained in $D$, and therefore $\psi_i(B)\subseteq D$. But then 
$x_i\in \psi_i(B)\subseteq D$ and so, $y_i\in D$ as well, which yields that  the Zariski 
closure of the group generated by $x_i+y_i$ is the algebraic subgroup $B_i$ of $C$ generated by $\psi_i(B)$ 
and $y_i$.  
\end{proof}
 
We just have to show that we can choose $x_{i,j}$ both satisfying \eqref{necessary technical 
condition} and also such that the above intersection \eqref{nonempty intersection} is nonempty. 
So, the problem reduces to the following: $L$ is a finitely generated field of characteristic 
$0$, $\varphi_1,\dots,  \varphi_\ell$ are algebraic group homomorphisms (of finite kernel)  between a simple abelian variety $A$ and another abelian variety  $C$ all defined over $L$, and we want to find 
$z\in A(\Kbar)$ such that for each positive integer $n$, and for each $i=1,\dots, \ell$, we have
\begin{equation}
\label{necessary technical condition 2}
n\varphi_i(z)\notin \varphi_i(A)\left(L\left(C_{\tor}\right)\right).
\end{equation}
Indeed, with the above notation, $A:=A_j$, $L$ is the extension of $K$ generated  by $x_{i,k}$ 
(for $i=1,\dots, s$, and $k=1,\dots, j-1$), and the $\varphi_i$'s are the homomorphisms 
$\psi_i$'s (restricted on $A=A_j$) for which $\psi_i(A_j)$ is nontrivial. 

Let $d$ be the maximum of the degree of the isogenies $\varphi_i':A\lra \varphi_i(A)\subset C$. In particular, this means that for each $w\in C(\Kbar)$, and for each $z\in A(\Kbar)$ such that $\phi_i(z)=w$ we have
\begin{equation}
\label{key index inequality}
\left[L(z):L\right]\le d\cdot \left[L(w):L\right].  
\end{equation}
For any subfield $M\subseteq \Kbar$, we let 
$M^{(d)}$ be the compositum of all extensions of $M$ of degree at most equal to $d$. 

\begin{claim}
\label{simple groups claim}
Let $L$ be a finitely generated field of characteristic $0$, let $C$ be an abelian variety defined over $L$, let $L_{\tor}:=L(C_{\tor})$, and let $d$ be a positive integer. Then there exists a normal extension of $L_{\tor}^{(d)}$ whose Galois group is not abelian.
\end{claim}

\begin{proof}[Proof of Claim~\ref{simple groups claim}.]
As proven in \cite{Thornhill}, the field $L_{\tor}$ is Hilbertian (note that $L$ itself is Hilbertian since it is a finitely generated field of characteristic $0$).  For each positive integer $n$, according to \cite[Corollary~16.2.7 (a)]{Fried}, there exists a Galois extension $L_n$ of $L_{\tor}$ such that $\Gal(L_n/L_{\tor})\isomto S_n$ (the symmetric group on $n$ letters).  Assume there exists a abelian extension $L_0$ of $L_{\tor}^{(d)}$ containing $L_n$. If $n>\max\{5,d!\}$, we will derive a contradiction from our assumption. 

We let $G_1:=\Gal\left(L_0/L_{\tor}^{(d)}\right)$ and $G_0:=\Gal(L_0/L_{\tor})$. Then there exists a surjective group homomorphism $f:G_0\lra S_n$. Because $G_1$ is a normal subgroup of $G_0$ (and $f$ is a surjective group homomorphism), we get that $f(G_1)$ is a normal subgroup of $S_n$, and moreover, it is abelian since $G_1$ is abelian. Because $n\ge 5$, the only proper normal subgroup of $S_n$ is $A_n$, which is not abelian. Hence, $G_1\subseteq \ker(f)$, and therefore, $f$ induces a surjective group homomorphism (also denoted by $f$) from $G_0/G_1$ to $S_n$; more precisely, we have a surjective group homomorphism $f:G^{(d)}\lra S_n$, where $G^{(d)}:=\Gal\left(L_{\tor}^{(d)}/L_{\tor}\right)$. 
But $G^{(d)}$ is a group of exponent $d!$, and so,  $S_n=f(G^{(d)})$ is also a group of exponent $d!$, which is a contradiction with the fact that $n>d!$.    
\end{proof}
Claim~\ref{simple groups claim} yields that there exists a point $z\in A(\Kbar)$ which is not defined over a abelian extension of $L\left(C_{\tor}\right)^{(d)}$; i.e., $nz\notin A\left(L\left(C_{\tor}\right)^{(d)}\right)$ for all positive integers $n$. Hence, $n\phi_i(z)\notin \phi_i(A)\left(L\left(C_{\tor}\right)\right)$ (see \eqref{key index inequality}), which  concludes the proof of Lemma~\ref{existence of a point 2}.
\end{proof}

The next result will be used (only) in the proof of Theorem~\ref{abelian varieties}.
\begin{lemma}
\label{replacing by a conjugate}
It suffices to prove Theorem~\ref{abelian varieties} for a conjugate 
$\gamma^{-1}\circ \sigma\circ \gamma$ of the automorphism $\sigma$ under some automorphism 
$\gamma$.
\end{lemma}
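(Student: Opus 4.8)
The plan is to prove that each of the two conditions in Theorem~\ref{abelian varieties} is unchanged when $\sigma$ is replaced by a conjugate $\gamma^{-1}\circ\sigma\circ\gamma$ under an automorphism $\gamma$ of $A$ as an algebraic variety over $\Kbar$. Note first that such a conjugate is again a finite self-map of $A$ (a composition of finite maps and automorphisms), so Theorem~\ref{abelian varieties} does apply to it. Granting the two invariance statements, the lemma follows formally: if the theorem holds for $\gamma^{-1}\circ\sigma\circ\gamma$, then combining ``(1) holds for $\sigma$ iff (1) holds for the conjugate'', the assumed equivalence ``(1) holds for the conjugate iff (2) holds for the conjugate'', and ``(2) holds for the conjugate iff (2) holds for $\sigma$'' shows that (1) and (2) are equivalent for $\sigma$.

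The first invariance (for condition (1)) comes from the identity $(\gamma^{-1}\circ\sigma\circ\gamma)^{n} = \gamma^{-1}\circ\sigma^{n}\circ\gamma$ for every $n\ge 0$, which gives $\OO_{\gamma^{-1}\sigma\gamma}(x) = \gamma^{-1}\bigl(\OO_\sigma(\gamma(x))\bigr)$. Since $\gamma$ is biregular it is a homeomorphism for the Zariski topology, so $\OO_{\gamma^{-1}\sigma\gamma}(x)$ is Zariski dense in $A$ precisely when $\OO_\sigma(\gamma(x))$ is; and as $x$ runs over $A(\Kbar)$, so does $\gamma(x)$. Hence some point of $A(\Kbar)$ has a Zariski dense $\sigma$-orbit if and only if some point of $A(\Kbar)$ has a Zariski dense $(\gamma^{-1}\circ\sigma\circ\gamma)$-orbit.

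The second invariance (for condition (2)) comes from observing that $f\mapsto f\circ\gamma$ is a bijection from the set of non-constant rational maps $A\lra\bP^1$ to itself, with inverse $g\mapsto g\circ\gamma^{-1}$. A direct computation gives $(f\circ\gamma)\circ(\gamma^{-1}\circ\sigma\circ\gamma) = f\circ\sigma\circ\gamma$, and composing on the right with $\gamma^{-1}$ shows that this equals $f\circ\gamma$ if and only if $f\circ\sigma = f$. Thus $\sigma$ preserves a non-constant fibration $A\lra\bP^1$ exactly when $\gamma^{-1}\circ\sigma\circ\gamma$ does. There is no real obstacle in this argument; the only point to keep track of is that $\gamma$ is taken to be an automorphism of the variety $A$ over $\Kbar$, which is exactly what legitimizes treating it as a homeomorphism of $A(\Kbar)$ and as inducing an automorphism of the function field, so that both displayed identities are meaningful as identities of rational maps.
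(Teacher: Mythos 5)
Your proof is correct and follows essentially the same route as the paper's: the orbit identity $\OO_{\gamma^{-1}\sigma\gamma}(x)=\gamma^{-1}\bigl(\OO_\sigma(\gamma(x))\bigr)$ together with biregularity of $\gamma$ handles condition (1), and the correspondence $f\mapsto f\circ\gamma$ handles condition (2). You spell out the logical bookkeeping a bit more explicitly than the paper does, but the underlying argument is the same.
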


\begin{proof}
Since $\OO_{\gamma^{-1}\sigma\gamma}(\gamma^{-1}(x))=\gamma^{-1}\left(\OO_\sigma(x)\right)$, we 
obtain that there exists a Zariski dense orbit of an algebraic point under the action of 
$\sigma$ if and only if there exists a Zariski dense orbit of an algebraic point under the 
action of $\gamma^{-1}\circ \sigma\circ\gamma$. Also, $\sigma$ preserves a non-constant fibration $f:A\lra \bP^1$ if and only if 
$\gamma^{-1}\sigma\gamma$ preserves the non-constant fibration $f\circ \gamma$.
\end{proof}

The conclusion of the next result shares the same philosophy as the conclusion of Lemma~\ref{existence of a point 2}: one can find an algebraic point in an abelian variety so that it is \emph{sufficiently} generic with respect to any given set of finitely many points.

\begin{lemma}
\label{isogenies and m}
Let $\Gamma\subseteq A(K)$ be a subgroup such that $\End(A)\otimes_\Z\Gamma$ is a finitely 
generated $\End(A)$-module, and let $B\subseteq A$ be a nontrivial abelian subvariety. Then 
there exists $x\in B(\Kbar)$ such that for each $\psi\in\End(A)$ satisfying $\psi(x)\in\Gamma$, we must 
have that $B\subseteq \ker(\psi)$.
\end{lemma}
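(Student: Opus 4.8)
The plan is to reduce the statement to a single-map genericity assertion and then invoke the Galois-theoretic argument already developed in Lemma~\ref{existence of a point 2}. First I would decompose $B$ into simple abelian subvarieties, $B = A_1 + \cdots + A_m$ (up to isogeny). Using Poincar\'e's Reducibility Theorem (Fact~\ref{Poincare}), choose a complement so that $A = B + B'$ with $B \cap B'$ finite, and note that an endomorphism $\psi\in\End(A)$ restricts to $B$ and, by simplicity, on each factor $A_j$ the restriction $\psi|_{A_j}$ is either zero or an isogeny onto its image. The condition ``$B \subseteq \ker(\psi)$'' fails precisely when $\psi|_{A_j}$ is an isogeny for some $j$. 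So it is enough to find $x = x_1 + \cdots + x_m$ with $x_j \in A_j(\Kbar)$ such that, for every $j$ and every $\psi\in\End(A)$ with $\psi|_{A_j}$ an isogeny, we have $\psi(x)\notin\Gamma$.

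The key point is that $\End(A)$ is finitely generated as a $\Z$-module (Fact~\ref{minimal poly} and the structure theory recalled in Section~\ref{abelian varieties section}), so as $\psi$ ranges over $\End(A)$ with $\psi|_{A_j}$ an isogeny, the isogenies $\psi|_{A_j}: A_j \to A$ have bounded degree, say bounded by some $d$ depending only on $A$ and a choice of $\Z$-basis of $\End(A)$; indeed it suffices to bound the degree of the restriction of each basis element, since an arbitrary $\psi(x_j)$ lies in the subgroup generated by the values of the basis elements at $x_j$. Next, since $\End(A)\otimes_\Z\Gamma$ is a finitely generated $\End(A)$-module and $\End(A)$ is a finitely generated $\Z$-module, $\Gamma$ sits inside a finitely generated subgroup $\Gamma'$ of $A(K)$ that is stable under $\End(A)$; by enlarging $K$ to a finitely generated field $L$ over which $\Gamma'$ and all of $\End(A)$ are defined, and adjoining all torsion, we may assume $\Gamma' \subseteq A(L_{\tor})$ where $L_{\tor} = L(A_{\tor})$. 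If $\psi(x)\in\Gamma \subseteq A(L_{\tor})$ with $\psi|_{A_j}$ an isogeny of degree $\le d$, then the degree inequality \eqref{key index inequality} forces $x_j$ to be defined over $L_{\tor}^{(d)}$ (the compositum of all extensions of $L_{\tor}$ of degree $\le d$) — more precisely, over an abelian extension of it, since $A(L_{\tor})$ contributes and all relevant extensions are controlled.

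I would then apply Claim~\ref{simple groups claim} (with $C = A$): there is a normal extension of $L_{\tor}^{(d)}$ with nonabelian Galois group, hence one can choose $x_j \in A_j(\Kbar)$ not defined over any abelian extension of $L_{\tor}^{(d)}$, i.e. with $n x_j \notin A_j(L_{\tor}^{(d)\,\mathrm{ab}})$ for all $n\ge 1$. Choosing each $x_j$ successively generic in this sense — exactly as in the inductive choice of the $x_{i,j}$ in the proof of Lemma~\ref{existence of a point 2}, so that $x_j$ is generic over the field generated by $L_{\tor}$ and $x_1,\dots,x_{j-1}$ — and setting $x = x_1 + \cdots + x_m$, one checks that for any $\psi$ with $\psi|_{A_j}$ an isogeny for some $j$, picking the largest such $j$ with $x_j\neq 0$ and using $\psi|_{A_j}$ has finite kernel, $\psi(x)\in\Gamma$ would make $x_j$ (up to a nonzero multiple and a point over the smaller field) definable over $L_{\tor}^{(d)\,\mathrm{ab}}$, contradicting the choice of $x_j$. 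Hence no such $\psi$ exists, i.e. every $\psi$ with $\psi(x)\in\Gamma$ kills $B$.

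The main obstacle I anticipate is the bookkeeping in the degree/field-of-definition bound: one must be careful that $\psi(x)\in\Gamma$ genuinely pins down each coordinate $x_j$ over a \emph{controlled} field (an abelian extension of $L_{\tor}^{(d)}$) rather than over something larger, and this requires the successive-genericity device from Lemma~\ref{existence of a point 2} together with the fact that on a simple factor $A_j$ the intersection of $\psi_i(A_j)$ with a proper subgroup is finite, so that a suitable multiple of $x_j$ becomes the unique point of a $0$-dimensional variety defined over the smaller field. Once that reduction is set up cleanly, the conclusion follows formally from Claim~\ref{simple groups claim}.
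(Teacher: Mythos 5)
Your proposal takes a genuinely different route from the paper, and unfortunately the route has a real gap. The paper does \emph{not} invoke Claim~\ref{simple groups claim} here; it passes to an isogenous product $A_0=\prod_i C_i^{k_i}$, writes each endomorphism as a tuple of matrices over the orders $R_i:=\End(C_i)$, and simply chooses the coordinates $x_{i,j}$ of $x$ so that, together with the finitely many $R_i$-module generators $y_{i,1},\dots,y_{i,\ell_i}$ of $\Gamma_i$, they are $R_i$-linearly independent. This is a pure linear-algebra/module argument, using only that $C_i(\Kbar)\otimes_{R_i}\Frac(R_i)$ has infinite dimension over $\Frac(R_i)$; no Galois theory enters.

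The gap in your version is the claim that ``the isogenies $\psi|_{A_j}\colon A_j\to A$ have bounded degree'' as $\psi$ ranges over $\End(A)$. This is false: $[n]\in\End(A)$ restricts to an isogeny of degree $n^{2\dim A_j}$ on $A_j$, which is unbounded. Your attempted repair, that ``it suffices to bound the degree of the restriction of each basis element,'' does not produce a bound either: writing $\psi=\sum_k a_k\phi_k$ with $a_k\in\Z$ only tells you that $\psi(x_j)$ lies in the subgroup generated by the $\phi_k(x_j)$; it says nothing about the degree of $\ker(\psi|_{A_j})$, which grows with the coefficients $a_k$. Consequently the inequality you quote from the proof of Lemma~\ref{existence of a point 2} (inequality~\eqref{key index inequality}) does not give a field $L_{\tor}^{(d)}$ that is uniform in $\psi$, and the Galois-theoretic contradiction via Claim~\ref{simple groups claim} does not close. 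The degree trick works in Lemma~\ref{existence of a point 2} precisely because there one fixes a \emph{finite} list of morphisms $\varphi_1,\dots,\varphi_\ell$ in advance; here $\psi$ must be allowed to range over the entire (infinite, unbounded-degree) ring $\End(A)$, which is why the paper switches to the linear-independence argument over the $R_i$.
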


\begin{proof}
Each abelian variety is isogenous to a product of simple abelian varieties; so let 
$\pi:A\lra A_0:=\prod_{i=1}^r C_i^{k_i}$ be such an isogeny, where each $C_i$ is a simple 
abelian variety. Then it suffices to find an algebraic point $y\in C:=\pi(B)$ such that for each 
$\phi\in\End(A_0)$, if $\phi(y)\in \pi(\Gamma)$, then $C\subseteq\ker(\phi)$.

At the expense of replacing $C$ with an isogenous abelian variety, we may assume that  
$C:=\prod_{i=1}^r C_i^{m_i}$ with $0\le m_i\le k_i$. Each endomorphism $\phi\in \End(A_0)$ is of 
the form $(J_1,\dots, J_r)$ where each $J_i\in M_{k_i}(R_i)$, where $M_{k_i}(R_i)$ is the $k_i$-by-$k_i$ matrices with entries in the ring $R_i$ of endomorphisms of $C_i$ (note that $R_i$ is a 
finite integral extension of $\Z$). We let $\Gamma_i$ be the finitely generated $R_i$-module 
generated by the projections of $\pi(\Gamma)$ on each of the $k_i$ copies of $C_i$ contained in 
the presentation of $A_0=\prod_{i=1}^r C_i^{k_i}$. We let $y_{i,1},\dots, y_{i,\ell_i}$ be   
generators of the free part of $\Gamma_i$ as an $R_i$-module. Without loss of generality, we may assume the points $y_{i,1},\dots, y_{i,\ell_i}$ are linearly independent over $R_i$. 

Then it suffices to pick $x\in C$ of the form 
$$(x_{1,1},\dots, x_{1,m_1},x_{2,1},\dots, x_{2,m_2},\dots, x_{r,1},\dots, x_{r,m_r}),$$
where each $x_{i,j}\in C_i$ such that for each $i$, the points 
$x_{i,1},\dots, x_{i,m_i}, y_{i,1},\dots, y_{i,\ell_i}$ are linearly independent over $R_i$. The 
existence of such points $x_{i,j}$ follows from the fact that each 
$C_i(\Kbar)\otimes_{R_i}\Frac(R_i)$ has the structure of a $\Frac(R_i)$-vector space of infinite 
dimension. 
\end{proof}

The next result is an application of Fact~\ref{Faltings theorem}.

\begin{lemma}
\label{combinatorial lemma}
Let  $y_1,\dots, y_r\in A(K)$, and let $P_1,\cdots, P_r\in \Q[z]$ such that $P_i(n)\in\Z$ for 
each $n\ge 1$ and for each $i=1,\dots, r$, while $\deg(P_r)>\cdots >\deg(P_1)>0$. Then for each 
infinite subset $S\subseteq \N$, there exist nonzero integers $\ell_1,\dots, \ell_r$ such that 
the Zariski closure $V$ of  the set 
$$\{P_1(n)y_1+\cdots +P_r(n)y_r\colon n\in S\}$$  
contains  a coset of the subgroup $\Gamma$ generated by $\ell_1y_1,\cdots, \ell_ry_r$.  
\end{lemma}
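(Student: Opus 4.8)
The plan is to extract from $S$ an infinite subset whose associated points lie on an irreducible coset (via Mordell--Lang), and then to show that this coset is forced to be ``as large as possible'' in the relevant directions. Throughout, write $v_n := P_1(n)y_1 + \cdots + P_r(n)y_r$; since every $P_i(n)$ is an integer, all the $v_n$ lie in the finitely generated subgroup $\Gamma_0 := \langle y_1,\dots, y_r\rangle \subseteq A(K)$.

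\emph{Step 1: producing an irreducible coset.} Since the Zariski topology on $A$ is Noetherian, among all Zariski-closed subsets $Z\subseteq A$ for which $\{n\in S : v_n\in Z\}$ is infinite there is a minimal one, $W$. Minimality forces $W$ to be irreducible (if $W = W_1\cup W_2$ with both $W_i$ proper and closed, then one of them would already have the infiniteness property), and, setting $S_1 := \{n\in S : v_n\in W\}$ — an infinite set — minimality also gives $W = \overline{\{v_n : n\in S_1\}}$. Thus $W$ is an irreducible subvariety carrying a Zariski-dense set of points contained in the finitely generated group $\Gamma_0$, so by Fact~\ref{Faltings theorem} we have $W = w_0 + B$ for some abelian subvariety $B\subseteq A$ and some $w_0\in W$ (e.g.\ $w_0 = v_{n_0}$ for any $n_0\in S_1$). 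Because $S_1\subseteq S$, $W$ is contained in the Zariski closure $V$ of $\{v_n : n\in S\}$. Hence it suffices to produce nonzero integers $\ell_1,\dots,\ell_r$ with $\ell_i y_i\in B$ for every $i$: then, with $\Gamma = \langle \ell_1 y_1,\dots, \ell_r y_r\rangle$, we get $w_0 + \Gamma \subseteq w_0 + B = W\subseteq V$, so $V$ contains the coset $w_0+\Gamma$.

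\emph{Step 2: the $y_i$ are torsion modulo $B$.} Let $\pi\colon A\to A/B$ be the quotient and $\bar y_i := \pi(y_i)$. For $n\in S_1$ we have $v_n\in w_0 + B$, so $\sum_i P_i(n)\bar y_i = \pi(w_0)$ is independent of $n\in S_1$. Fix $n_0\in S_1$ and put $Q_i := P_i - P_i(n_0)\in\Q[z]$; then $\deg Q_i = \deg P_i$, so $Q_1,\dots,Q_r$ have strictly increasing positive degrees and are therefore linearly independent over $\Q$, and $\sum_i Q_i(n)\bar y_i = 0$ for all $n\in S_1$. Linear independence of $Q_1,\dots,Q_r$ implies that the generalized Vandermonde polynomial $\det\big(Q_i(t_j)\big)_{1\le i,j\le r}\in\Q[t_1,\dots,t_r]$ is not identically zero; since $S_1$ is infinite and a nonzero polynomial in $r$ variables cannot vanish on $S_1^r$, we may choose $n_1,\dots,n_r\in S_1$ with $D := \det\big(Q_i(n_j)\big)_{1\le i,j\le r}\ne 0$, and $D\in\Z\setminus\{0\}$ because each $P_i$ is integer-valued. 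Multiplying the system $\sum_i Q_i(n_j)\bar y_i = 0$ $(1\le j\le r)$ by the adjugate of the integer matrix $\big(Q_i(n_j)\big)_{j,i}$ yields $D\bar y_i = 0$ for each $i$; that is, $D y_i\in B$. Taking $\ell_1 = \cdots = \ell_r = D$ completes the argument.

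\emph{Main obstacle.} The only nonroutine point is Step 2: passing from ``$\sum_i P_i(n)\bar y_i$ is constant on an arbitrary infinite set $S_1$'' to ``each $\bar y_i$ is torsion.'' One cannot apply finite-difference operators directly, because $S_1$ need not contain arithmetic progressions; the device that makes it work is to evaluate at $r$ well-chosen points of $S_1$ and to invoke the non-vanishing of the generalized Vandermonde determinant, which in turn rests on the linear independence of the $Q_i$ guaranteed by the hypothesis $\deg P_r > \cdots > \deg P_1 > 0$. Everything else — the Noetherian reduction to an irreducible coset, the appeal to Faltings' theorem, and the fact that a nonzero multivariate polynomial does not vanish on $S_1^r$ — is standard.
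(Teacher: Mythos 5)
Your proof is correct, and it diverges from the paper's argument at the key algebraic step. Both proofs open the same way: invoke Faltings' theorem (Fact~\ref{Faltings theorem}) to cut down to an irreducible coset $w_0 + B$ containing $\{v_n : n\in S_1\}$ for an infinite $S_1\subseteq S$, and reduce to showing that some nonzero multiple of each $y_i$ lies in $B$; your Noetherian-minimality formulation of this reduction is a tidier version of the paper's ``at the expense of replacing $S$ by an infinite subset.'' After that the proofs part ways. The paper constructs an endomorphism $\psi\in\End(A)$ with $\ker(\psi)^0 = B$ (via Poincar\'e reducibility, as in Lemma~\ref{subabelian}) and then argues by induction on $r$, eliminating the $y_1$-term at each step via the substitution $Q_i := P_1(n_1)P_i - P_i(n_1)P_1$. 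You instead work directly with the quotient $\pi\colon A\to A/B$, shift by $P_i(n_0)$ to make the system homogeneous in the $\bar y_i$, and solve it in one shot: evaluate at $r$ points of $S_1$ where the integer matrix $(Q_i(n_j))$ has nonzero determinant $D$, and multiply by the adjugate to get $D\bar y_i = 0$ for all $i$. Your route is more uniform (no induction, a single multiplier $D$ serving as every $\ell_i$), and working with the quotient map rather than with a $\psi$ whose kernel merely has $B$ as its identity component sidesteps the small bookkeeping of passing to the connected component. The cost is that you invoke the non-trivial (though standard) fact that the generalized Vandermonde determinant $\det(Q_i(t_j))$ of linearly independent polynomials is not identically zero; the two arguments also consume the hypothesis $\deg P_r > \cdots > \deg P_1 > 0$ differently --- you use it only to guarantee linear independence of the $Q_i$, whereas the paper's induction additionally needs $\deg P_1 < \deg P_i$ to preserve the degrees of the reduced polynomials at each step.
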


\begin{proof}
Because $V(K)\cap\Gamma$ is Zariski dense in $V$, then by Fact~\ref{Faltings theorem} we obtain that $V$ is a finite union of cosets of algebraic subgroups of $A$.  So, at the expense of replacing $S$ by an infinite subset, we may assume $V=z+C$, for some $z\in A(K)$ and some irreducible algebraic subgroup $C$ of $A$. This is equivalent with the existence of an endomorphism $\psi:A\lra A$ such that $\ker(\psi)^0=C$ (the construction of $\psi$ is identical with the one given in the proof of Lemma~\ref{subabelian}); hence $\psi$ is constant on the set $\{P_1(n)y_1+\cdots P_r(n)y_r\}_{n\in S}$. We will show there exist nonzero integers $\ell_i$ such that $\ell_iy_i\in \ker(\psi)$ for each $i=1,\dots, r$; since $\ker(\psi)^0=C$, then we obtain the desired conclusion.

We proceed by induction on $r$. The case $r=1$ is obvious since then $\{P_1(n)\}_{n\in S}$ takes infinitely many distinct integer values (note that $\deg(P_1)\ge 1$), and so, if $\psi$ is constant on the set $\{P_1(n)y_1\}_{n\in S}$, then $\psi(\ell y_1)=0$ for some nonzero $\ell:=P_1(n)-P_1(n_0)$ with distinct $n_0,n\in S$.  Next we assume the statement holds for all $r<s$ (where $s\ge 2$), and we prove it for $r=s$.  

Let $n_0\in S$. At the expense of replacing each $P_i(n)$ by $P_i(n)-P_i(n_0)$, we may assume from now on that the set $\{P_1(n)y_1+\cdots P_s(n)y_s\}_{n\in S}$ lies in the kernel of $\psi$. Let $n_1\in S$ such that $P_1(n_1)\ne 0$ (note that $\deg(P_1)\ge 1$), and for each $i=2,\dots, s$ we let $Q_i(z):=P_1(n_1)\cdot P_i(n)-P_1(n)\cdot P_i(n_1)$. Then the set $\{\sum_{i=2}^s Q_i(n)y_i\}_{n\in S}$ is in the kernel of $\psi$. Because $\deg(Q_i)=\deg(P_i)$ for each $i=2,\dots, s$, we can use the induction hypothesis and conclude that there exist nonzero integers $\ell_2,\dots, \ell_s$ such that $\ell_iy_i\in\ker(\psi)$ for each $i$. Since $\psi(P_1(n_1)y_1+\cdots +P_s(n_1)y_s)=0$ and $P_1(n_1)\ne 0$, then also $\left(P_1(n_1)\cdot\prod_{i=2}^s\ell_i \right)y_1\in\ker(\psi)$. This concludes our proof.
\end{proof}

The above Lemma has the following important consequence for us.
\begin{lemma}
\label{infinitely many iterates nilpotent operator}
Let $K$ be a field of characteristic $0$, let $\Kbar$ be an algebraic closure of $K$, let $A$ be an abelian variety defined over $K$, let $\tau \in \End(A)$ with the property that there exists a positive integer $r$ such that $(\tau-\id)^r=0$, let $y\in A(\Kbar)$, let $\sigma:A\lra A$ be an endomorphism as algebraic varieties such that $\sigma=T_y\circ \tau$, and let $x\in A(\Kbar)$. Let $\gamma\in\End(A)$ with the property that there exists an infinite set $S$ of positive integers such that $\gamma$ is constant on the set $\{\sigma^n(x)\colon n\in S\}$. Then there exists a positive integer $\ell$ such that $\ell\cdot\left(\beta(x)+ y\right)\in \ker(\gamma)$, where $\beta:=\tau-\id$. 
\end{lemma}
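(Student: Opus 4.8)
The plan is to make the action of $\sigma$ completely explicit using the nilpotence of $\beta:=\tau-\id$, and then to feed the resulting formula into Lemma~\ref{combinatorial lemma}. Write $\tau=\id+\beta$, so that $\beta^r=0$ as an element of $\End(A)$, and set $w:=\beta(x)+y\in A(\Kbar)$. The first step is to prove, by induction on $n\ge 1$, the closed formula
\[
\sigma^n(x)\;=\;x+\sum_{m=1}^{r}\binom{n}{m}\,\beta^{m-1}(w).
\]
The base case $n=1$ reads $\sigma(x)=\tau(x)+y=x+\beta(x)+y=x+w$. For the inductive step one applies $\sigma=T_y\circ\tau$ to the formula for $\sigma^n(x)$, using $\tau\bigl(\beta^{m-1}(w)\bigr)=\beta^{m-1}(w)+\beta^{m}(w)$ together with $\tau(x)+y=x+w$, then reindexes the resulting $\beta$-shifted sum, discards the term $\binom{n}{r}\beta^{r}(w)$ (which vanishes since $\beta^r=0$, hence $\beta^r(y)=0$ and $\beta^{r+1}(x)=0$), and collapses the coefficients by Pascal's rule $\binom{n}{m}+\binom{n}{m-1}=\binom{n+1}{m}$. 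The structural point that makes everything work is that the coefficient of $\binom{n}{m}$ in this expansion is $\beta^{m-1}$ applied to the \emph{single} point $w=\beta(x)+y$.

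Given the formula, apply the group homomorphism $\gamma$ and set $v_m:=\gamma\bigl(\beta^{m-1}(w)\bigr)\in A(\Kbar)$ for $m=1,\dots,r$, so that
\[
\gamma\bigl(\sigma^n(x)\bigr)\;=\;\gamma(x)+\sum_{m=1}^{r}\binom{n}{m}\,v_m .
\]
After enlarging $K$ to a finitely generated field over which the finitely many points $v_1,\dots,v_r$ are all rational, the hypothesis that $\gamma$ is constant on $\{\sigma^n(x)\colon n\in S\}$ says precisely that the set $\{\sum_{m=1}^r\binom{n}{m}v_m\colon n\in S\}$ consists of a single point. I would then apply Lemma~\ref{combinatorial lemma} with $y_m:=v_m$ and $P_m(z):=\binom{z}{m}$: indeed $P_m(n)\in\Z$ for $n\ge 1$ and $\deg(P_r)>\cdots>\deg(P_1)=1>0$. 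It yields nonzero integers $\ell_1,\dots,\ell_r$ such that the Zariski closure $V$ of $\{\sum_{m}\binom{n}{m}v_m\colon n\in S\}$ contains a coset of the subgroup $\Gamma$ generated by $\ell_1v_1,\dots,\ell_rv_r$. But $V$ is a single point, and a coset of a nontrivial subgroup is infinite, so $\Gamma=\{0\}$; in particular $\ell_1v_1=0$, that is, $\gamma\bigl(\ell_1(\beta(x)+y)\bigr)=\ell_1\gamma(w)=0$. Taking $\ell:=|\ell_1|$ gives the desired conclusion $\ell\cdot(\beta(x)+y)\in\ker(\gamma)$.

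The main obstacle is the bookkeeping in the induction establishing the closed formula for $\sigma^n(x)$: one must check that the top-degree term is exactly $\binom{n}{r}\beta^{r-1}(w)$ (with $\beta^{r-1}(w)=\beta^{r-1}(y)$, since $\beta^r(x)=0$) and that no terms of order $\binom{n}{r+1}$ or higher survive, so that the hypotheses of Lemma~\ref{combinatorial lemma} apply verbatim. Once the formula is in place the rest is immediate. (Alternatively, one can avoid Lemma~\ref{combinatorial lemma} entirely: the $v_m$ lie in the image under $\gamma$ of the finitely generated group generated by $x,w,\beta(w),\dots,\beta^{r-1}(w)$, hence in a finitely generated abelian group; since $\binom{z}{1},\dots,\binom{z}{r}$ are $\Q$-linearly independent polynomials, constancy of $\sum_m\binom{n}{m}v_m$ on an infinite set forces every $v_m$ into the finite torsion subgroup, and one may take $\ell$ to be its exponent.)
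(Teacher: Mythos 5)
Your proof is correct and follows essentially the same route as the paper: derive the closed formula $\sigma^n(x)=x+\sum_{j=1}^r\binom{n}{j}\beta^{j-1}\bigl(\beta(x)+y\bigr)$ (the paper obtains this by a direct telescoping computation $\sigma^n(x)=\tau^n(x)+\sum_{i=0}^{n-1}\tau^i(y)$ rather than by induction, but the identity and the way it is used are the same) and then invoke Lemma~\ref{combinatorial lemma} with the polynomials $\binom{z}{m}$. Applying $\gamma$ \emph{before} the lemma and observing that the resulting set is a single point, as you do, versus keeping $\gamma$ outside and working with elements of $\ker(\gamma)$ as in \eqref{many points in the kernel}, is a purely cosmetic difference.

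Your parenthetical alternative is a genuine simplification worth flagging: since the $v_m=\gamma\bigl(\beta^{m-1}(w)\bigr)$ lie in a finitely generated subgroup of $A(\Kbar)$, writing that group as (free part)$\times$(finite torsion) and using the $\Q$-linear independence of $\binom{z}{1},\dots,\binom{z}{r}$ forces the free components of all $v_m$ to vanish, so each $v_m$ is torsion and one can take $\ell$ to be the exponent of the torsion subgroup. This bypasses Lemma~\ref{combinatorial lemma} entirely, and with it the appeal to Faltings' theorem that lemma is built on; of course Faltings is still used elsewhere in the main argument, so the global savings are modest, but as a proof of this lemma it is more elementary than the paper's.
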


\begin{proof}
We compute $\sigma^n(x)$ for any $n\in\N$; first of all, we have
\begin{equation}
\label{formula of an iterate}
\sigma^n(x)=\tau^n(x)+\sum_{i=0}^{n-1}\tau^i(y).
\end{equation}
Then (since $\beta=\tau-\id$ and also) noting that $\beta^r=0$ we have
\begin{align}
& \sigma^n(x)\\
& =\sum_{i=0}^n\binom{n}{i}\beta^i(x) + \sum_{i=0}^{n-1}\tau^i(y)\\
& = \sum_{i=0}^{r-1}\binom{n}{i}\beta^i(x) + \sum_{i=0}^{n-1}\sum_{j=0}^i \binom{i}{j}\beta^j(y)\\
& = \sum_{j=0}^{r-1}\binom{n}{j}\beta^j(x) + \sum_{j=0}^{r-1} \left(\sum_{i=j}^{n-1}\binom{i}{j}\right)\beta^j(y)\\
& = \sum_{j=0}^{r-1}\binom{n}{j}\beta^j(x) + \sum_{j=0}^{r-1} \binom{n}{j+1}\beta^j(y)\\
& =  x+\sum_{j=1}^{r} \binom{n}{j}\beta^{j}(x) + \sum_{j=1}^r \binom{n}{j}\beta^{j-1}(y)\\
\label{binomial formula}
& = x+\sum_{j=1}^r \binom{n}{j}\beta^{j-1}\left(\beta(x)+y\right).
\end{align}
Since $\gamma$ is constant on the set $\{\sigma^n(x)\colon n\in S\}$, then letting $n_1\in S$ we have that for each $n\in S$, 
\begin{equation}
\label{many points in the kernel}
\sum_{j=1}^r \left(\binom{n}{j}-\binom{n_1}{j}\right)\beta^{j-1}\left(\beta(x)+y\right)\in \ker (\gamma).
\end{equation}
Using Lemma~\ref{combinatorial lemma} and \eqref{many points in the kernel}, we obtain the desired conclusion.
\end{proof}

Then the following result is an immediate consequence of Lemma~\ref{infinitely many iterates nilpotent operator} and of Lemma~\ref{one multiple is enough}.
\begin{cor}
\label{immediate corollary to many iterates}
With the notation as in Lemma~\ref{infinitely many iterates nilpotent operator}, if the cyclic group generated by $\beta(x)+y$ is Zariski dense in $A$, then $\gamma=0$. Moreover, the set $\{\sigma^n(x)\colon n\in S\}$ is Zariski dense in $A$.
\end{cor}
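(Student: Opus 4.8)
The first assertion is immediate from the two cited lemmas. Write $z:=\beta(x)+y$; by hypothesis the cyclic group $\langle z\rangle$ is Zariski dense in $A$, and by the standing notation of Lemma~\ref{infinitely many iterates nilpotent operator} the endomorphism $\gamma$ is constant on $\{\sigma^n(x)\colon n\in S\}$. Lemma~\ref{infinitely many iterates nilpotent operator} then produces a positive integer $\ell$ with $\ell z\in\ker(\gamma)$. By Lemma~\ref{one multiple is enough} the cyclic group $\langle\ell z\rangle$ is again Zariski dense in $A$; since $\ker(\gamma)$ is an algebraic subgroup of $A$ (in particular Zariski closed) containing $\ell z$, it contains $\overline{\langle\ell z\rangle}=A$. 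Hence $\ker(\gamma)=A$, i.e.\ $\gamma=0$.

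For the second assertion I plan to invoke Lemma~\ref{combinatorial lemma} directly, applied to the explicit form of the iterates obtained in the proof of Lemma~\ref{infinitely many iterates nilpotent operator}. After enlarging $K$ inside $\Kbar$ to a finitely generated field we may assume $x,y\in A(K)$, so that the points $w_j:=\beta^{j-1}(z)$ for $1\le j\le r$ lie in $A(K)$, and $w_1=z$. By formula~\eqref{binomial formula} we have $\sigma^n(x)=x+\sum_{j=1}^{r}\binom{n}{j}w_j$ for every $n\ge 1$. The polynomials $P_j(t):=\binom{t}{j}$ satisfy $P_j(n)\in\Z$ for $n\ge 1$ and $\deg P_r>\cdots>\deg P_1=1>0$, so Lemma~\ref{combinatorial lemma} applies to the set $\{\sigma^n(x)-x\colon n\in S\}$: its Zariski closure contains a coset of the subgroup $\Gamma$ generated by $\ell_1 w_1,\dots,\ell_r w_r$ for suitable nonzero integers $\ell_j$. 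In particular $\Gamma\supseteq\langle\ell_1 z\rangle$, which is Zariski dense in $A$ by Lemma~\ref{one multiple is enough}; hence $\overline{\Gamma}=A$, so every coset of $\Gamma$ is Zariski dense, and therefore the Zariski closure of $\{\sigma^n(x)-x\colon n\in S\}$ is all of $A$. Translating back by $x$ shows that $\{\sigma^n(x)\colon n\in S\}$ is Zariski dense in $A$.

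There is no genuine obstacle here beyond bookkeeping: the substantive content has been packaged into Lemmas~\ref{infinitely many iterates nilpotent operator} and \ref{combinatorial lemma}, and the only things to keep track of are the binomial identity~\eqref{binomial formula} and the harmless enlargement of the base field. (Alternatively, the second assertion may be deduced from the first: were the Zariski closure $V$ of $\{\sigma^n(x)\colon n\in S\}$ a proper subvariety, then, since $V-x$ meets the finitely generated group $\langle w_1,\dots,w_r\rangle$ in a Zariski dense set, Fact~\ref{Faltings theorem} together with the passage to an infinite subset of $S$ exactly as in the proof of Lemma~\ref{combinatorial lemma} would exhibit $V$ as a coset of a proper abelian subvariety $B\subsetneq A$; then Lemma~\ref{subabelian} supplies a nonzero $\gamma'\in\End(A)$ vanishing on $B$, hence constant on $V\supseteq\{\sigma^n(x)\colon n\in S\}$, contradicting the first assertion.)
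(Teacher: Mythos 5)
Your argument for the first assertion is exactly the paper's: combine Lemma~\ref{infinitely many iterates nilpotent operator} (which produces $\ell$ with $\ell(\beta(x)+y)\in\ker\gamma$) with Lemma~\ref{one multiple is enough} to conclude $\ker\gamma\supseteq\overline{\langle\ell z\rangle}=A$.

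For the ``moreover'' part your primary route is correct but is a genuinely different packaging from the paper's. You apply Lemma~\ref{combinatorial lemma} directly to the identity $\sigma^n(x)-x=\sum_{j=1}^{r}\binom{n}{j}\beta^{j-1}(z)$ coming from~\eqref{binomial formula}, obtaining that $\overline{\{\sigma^n(x)-x : n\in S\}}$ contains a coset of the subgroup generated by $\ell_1 z,\dots,\ell_r\beta^{r-1}(z)$; since already $\langle\ell_1 z\rangle$ is Zariski dense (Lemma~\ref{one multiple is enough}), that coset is dense and you are done. This is a one-pass argument: Mordell--Lang is invoked only inside Lemma~\ref{combinatorial lemma}, and you never need to produce an auxiliary endomorphism. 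The paper instead invokes Fact~\ref{inside a finitely generated group} and Fact~\ref{Faltings theorem} on the orbit itself, picks a coset $w+H$ meeting $\{\sigma^n(x)\}_{n\in S}$ infinitely often, uses Lemma~\ref{subabelian} to manufacture a nonzero $\psi\in\End(A)$ constant on that infinite subset, and then reruns Lemma~\ref{infinitely many iterates nilpotent operator} to force $H=A$; this is precisely your parenthetical ``alternative'' argument. Both are valid; your primary version is slightly cleaner in that it makes the second assertion follow from the same lemma used for the first rather than re-deploying Faltings and Lemma~\ref{subabelian}. The bookkeeping you note (enlarging $K$ so that all the $\beta^{j-1}(z)$ are $K$-rational, and that $\binom{t}{j}\in\Q[t]$ is integer-valued on $\N$ with the required degree gaps) is indeed all that is needed.
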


\begin{proof}
Indeed, Lemmas~\ref{one multiple is enough} and \ref{infinitely many iterates nilpotent operator} yield that any group homomorphism $\gamma$ which is constant on the set $U:=\{\sigma^n(x)\colon n\in S\}$ must be trivial.  

Now, for the `moreover' part of Corollary~\ref{immediate corollary to many iterates}, Fact~\ref{inside a finitely generated group} yields that $U$ (along with $\OO_\sigma(x)$) is contained in a finitely generated subgroup of $A$, and so, Fact~\ref{Faltings theorem} yields that the Zariski closure of $U$ is a finite union of cosets of algebraic subgroups of $A$. Pick such a coset $w+H$ which contains infinitely many $\sigma^n(x)$. Then another application of Lemma~\ref{infinitely many iterates nilpotent operator} (coupled with Lemmas~\ref{subabelian} and \ref{one multiple is enough}) yields that $H=A$, thus completing our proof that $U$ is Zariski dense in $A$.   
\end{proof}

%%%%%%%%%%%%%%%%%%%%%%%%%%%%%%%%%%%%%%%%%%%%%%%%%%%%%%%%%%%%%%%%%%%%
%%%%%%%%%%%%%%%%%%%%%%%%%%%%%%%%%%%%%%%%%%%%%%%%%%%%%%%%%%%%%%%%%%%%

\section{The cyclic case}
\label{one section}

Now we are ready to prove Theorem~\ref{general theorem} for cyclic monoids.
\begin{proof}[Proof of Theorem~\ref{abelian varieties}.]
By Fact~\ref{rigidity}, there exists an isogeny $\tau :A\lra A$, and there exists  $y\in A(K)$, such that  $\sigma(x)=\tau(x)+y$ for all $x\in A$.  
At the expense of replacing $\sigma$ by an iterate $\sigma^n$ (and in particular, replacing $\tau$ by $\tau^n$; see also \eqref{formula of an iterate}), we may assume $\dim\ker(\tau^m-\id)=\dim(\ker(\tau-\id))$ for all $m\in\N$ (see  Lemma~\ref{a power suffices 2} which shows that it is sufficient to prove Theorem~\ref{abelian varieties} for an iterate of $\sigma$). In other words, we may assume that the only root of unity, if any, which is a root of the minimal polynomial $f$ (with coefficients in $\Z$) of $\tau\in\End(A)$ is equal to $1$.

Let $r$ be  the order of vanishing at $1$ of $f$, and let $f_1\in \Z[t]$ such that $f(t)=f_1(t)\cdot (t-1)^r$. Then $f_1$ is also a monic polynomial, and if $r=0$, then $f_1=f$. Let $A_1:=(\tau-\id)^r(A)$ and let $A_2:=f_1(\tau)(A)$, where $f_1(\tau)\in\End(A)$ and $\id$ is the identity map on $A$. If $r=0$, then $A_2=0$ and therefore $A_1=A$. By definition, both $A_1$ and $A_2$ are connected algebraic subgroups of $A$, hence they are both abelian subvarieties of $A$. Furthermore, by definition, the restriction of $\tau|_{A_1}\in\End(A_1)$ has  minimal polynomial equal to $f_1$ whose roots are not roots of unity. On the other hand, $(\tau-\id)^r|_{A_2}=0$. 

\begin{lemma}
\label{decomposition of A as a sum}
With the above notation, $A=A_1+A_2$.
\end{lemma}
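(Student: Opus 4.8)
The plan is to exploit the coprimality of $f_1(t)$ and $(t-1)^r$ in $\Q[t]$. Since $r$ is exactly the order of vanishing of $f$ at $1$, we have $f_1(1)\neq 0$, so $\gcd\bigl(f_1(t),(t-1)^r\bigr)=1$ in $\Q[t]$. By B\'ezout there exist $g,h\in\Q[t]$ with $g(t)f_1(t)+h(t)(t-1)^r=1$; clearing denominators yields a nonzero integer $N$ together with polynomials $g,h\in\Z[t]$ (which I rename for convenience) such that
\[
g(t)f_1(t)+h(t)(t-1)^r=N.
\]

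First I would record that $A_1=(\tau-\id)^r(A)$ and $A_2=f_1(\tau)(A)$ are $\tau$-stable: since $\tau$ is an isogeny, hence surjective, and commutes with every element of $\Z[\tau]$, we get $\tau(A_i)=A_i$, and therefore $P(\tau)(A_i)\subseteq A_i$ for every $P\in\Z[t]$ and $i=1,2$. In particular $A_1$ and $A_2$ are connected algebraic subgroups stable under the relevant operators appearing below.

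Next, I would apply the displayed identity as an equality of endomorphisms of $A$: for every $a\in A$,
\[
[N](a)=g(\tau)\bigl(f_1(\tau)(a)\bigr)+h(\tau)\bigl((\tau-\id)^r(a)\bigr).
\]
The first summand lies in $g(\tau)(A_2)\subseteq A_2$ and the second in $h(\tau)(A_1)\subseteq A_1$, so $[N](A)\subseteq A_1+A_2$. Since $N\neq 0$, the multiplication-by-$N$ map $[N]\colon A\to A$ is an isogeny, hence surjective, so
\[
A=[N](A)\subseteq A_1+A_2\subseteq A,
\]
which forces $A=A_1+A_2$. When $r=0$ this is trivial, as already observed: then $f_1=f$, so $A_2=f(\tau)(A)=\{0\}$ and $A_1=A$.

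The argument is essentially formal, so no substantial obstacle is expected; the only mild subtlety is that the B\'ezout relation lives in $\Q[t]$ while we must operate inside $\End(A)$ (which need not contain $\Q$). This is handled harmlessly by the multiplicative constant $N$, since $[N]$ is surjective on $A$.
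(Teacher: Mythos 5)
Your proof is correct and is essentially the paper's argument: both rest on the Bézout relation $g(t)f_1(t)+h(t)(t-1)^r=N$ in $\Z[t]$ (the paper takes $N$ to be the resultant) and then use divisibility of abelian varieties — the paper picks $x_0$ with $Nx_0=x$ and splits $x=x_1+x_2$, whereas you equivalently invoke surjectivity of $[N]$. The only cosmetic difference is that you appeal to $\tau$-stability of $A_1,A_2$ to place the two summands; the paper sidesteps this by commuting the factors, writing $g(\tau)f_1(\tau)(a)=f_1(\tau)\bigl(g(\tau)(a)\bigr)\in f_1(\tau)(A)=A_2$ directly.
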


\begin{proof}[Proof of Lemma~\ref{decomposition of A as a sum}.]
By the definition of $r$ and of $f_1$, we know that the polynomials $f_1(t)$ and $(t-1)^r$ are coprime; so there exist polynomials $g_1,g_2\in\Z[t]$ and there exists a nonzero integer $k$ (the resultant of $f_1(t)$ and of $(t-1)^r$) such that
$$f_1(t)\cdot g_1(t)+(t-1)^r\cdot g_2(t)=k.$$
Let $x\in A(\Kbar)$ and let $x_0\in A(\Kbar)$ such that $kx_0=x$. Then clearly $$x_1:=(\tau-\id)^r\left(g_2(\tau)x_0\right)\in A_1\text{ and }x_2:=f_1(\tau)\left(g_1(\tau)x_0\right)\in A_2,$$ 
and moreover, $x_1+x_2=kx_0=x$, as desired. 
\end{proof}

Arguing similarly, one can show that $A_1\cap A_2\subseteq A[k]$ since if $x\in A_1\cap A_2$ then $f_1(\tau)x=0=(\tau-\id)^rx$ and thus $$kx=\left(g_1(\tau)f_1(\tau)+g_2(\tau)(\tau-\id)^r\right)x=0.$$ 

Let $y_1\in A_1$ and $y_2\in A_2$ such that $y=y_1+y_2$; furthermore, we may assume that if $y_1\in A_2$ then $y_1=0$. We note that $\tau$ restricts to an endomorphism to each $A_1$ and $A_2$; we denote by $\tau_i$ the action of $\tau$ on each $A_i$. Let $y_0\in A_1(\Kbar)$ such that $(\id-\tau_1)(y_0)=y_1$ (note that $(\id-\tau_1):A_1\lra A_1$ is an isogeny because the minimal polynomial $f_1$ of $\tau_1\in\End(A_1)$ does not have the root $1$). Using Lemma~\ref{replacing by a conjugate}, it suffices to prove Theorem~\ref{abelian varieties} for $T_{-y_0}\circ \sigma\circ T_{y_0}$; so, we may and do assume that $y_1=0$.  

Let $\sigma_i:A_i\lra A_i$ be given by $\sigma_1(x)=\tau_1(x)$ and $\sigma_2(x)=\tau_2(x)+y_2$. Then for each $x\in A$, we let $x_1\in A_1$ and $x_2\in A_2$ such that $x=x_1+x_2$; we have:
$$\sigma(x)=\sigma(x_1+x_2)=\tau(x_1+x_2)+y_2=\tau(x_1)+\tau(x_2)+y_2=\sigma_1(x_1)+\sigma_2(x_2).$$
Moreover, $\sigma^n(x_1+x_2)=\sigma_1^n(x_1)+\sigma_2^n(x_2)$ for all $n\in\N$.

We let $\beta:=(\tau_2-\id)|_{A_2}\in\End(A_2)$; then $\beta^r=0$. Let $B$ be the Zariski closure of the subgroup of $A_2$ generated by $\beta(A_2)$ and $y_2$. Then $B$ is an algebraic subgroup of $A_2$.

\begin{lemma}
\label{fibration exists}
If $B\ne A_2$, then $\sigma$ preserves a nonconstant fibration.
\end{lemma}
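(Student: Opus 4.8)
The plan is to build the fibration by passing to an abelian-variety quotient of $A$ on which $\sigma$ becomes the identity map. Set $H:=A_1+A_2$... no --- set $H:=A_1+B$, an algebraic subgroup of $A$, and let $q:A\lra A':=A/H$ be the quotient homomorphism. The idea is that killing $A_1$ kills the part of $\tau$ with no eigenvalue $1$, while killing $B$ (which contains $\beta(A_2)$ and $y_2$) simultaneously trivializes the unipotent part of $\tau$ on $A_2$ and the translation part $y_2$.

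First I would check that $\sigma$ descends to $A'$. Since $\sigma=T_{y_2}\circ\tau$ (recall we have arranged $y_1=0$), it suffices to show $\tau(H)\subseteq H$. We have $\tau(A_1)=\tau_1(A_1)=A_1$ because $\tau_1$ is an isogeny of $A_1$. For $B$: writing $G:=\langle\beta(A_2),y_2\rangle$ so that $B=\overline{G}$, one computes $\tau(\beta(A_2))=(\id+\beta)(\beta(A_2))\subseteq\beta(A_2)+\beta^2(A_2)\subseteq\beta(A_2)$ and $\tau(y_2)=y_2+\beta(y_2)\in G$, hence $\tau(G)\subseteq G$, so by continuity of the morphism $\tau$ we get $\tau(B)=\tau(\overline{G})\subseteq\overline{\tau(G)}\subseteq\overline{G}=B$. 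Thus $\tau(H)\subseteq H$, and there is a unique endomorphism $\sigma':A'\lra A'$ with $q\circ\sigma=\sigma'\circ q$. Next I would compute $\sigma'$: for $a=a_1+a_2$ with $a_i\in A_i$ we have $q(\sigma(a))=q(\tau_1(a_1))+q(\tau_2(a_2))+q(y_2)=q(\tau_2(a_2))$ since $\tau_1(a_1)\in A_1\subseteq H$ and $y_2\in B\subseteq H$; and $\tau_2(a_2)=a_2+\beta(a_2)$ with $\beta(a_2)\in\beta(A_2)\subseteq B\subseteq H$, so $q(\sigma(a))=q(a_2)=q(a)$. Hence $\sigma'=\id_{A'}$.

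Then I would check $\dim A'>0$, which is where the hypothesis $B\ne A_2$ is used. Since $A=A_1+A_2$ with $A_1\cap A_2\subseteq A[k]$ finite, $\dim A=\dim A_1+\dim A_2$; since $A_1\cap B\subseteq A_1\cap A_2$ is finite, $\dim H=\dim A_1+\dim B$; therefore $\dim A'=\dim A_2-\dim B$. As $B$ is a proper algebraic subgroup of the connected abelian variety $A_2$, we have $\dim B<\dim A_2$ (if $\dim B=\dim A_2$ then $B^0=A_2$ and hence $B=A_2$, a contradiction), so $\dim A'\ge 1$. Finally, $A'$ being a positive-dimensional abelian variety, its function field has transcendence degree $\ge 1$ over $\Kbar$, so there is a nonconstant rational map $g:A'\lra\bP^1$; then $f:=g\circ q:A\lra\bP^1$ is a nonconstant rational map and $f\circ\sigma=g\circ q\circ\sigma=g\circ\sigma'\circ q=g\circ q=f$, giving the desired nonconstant fibration preserved by $\sigma$.

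The only nontrivial points are the $\tau$-invariance of $H$ (the one step using the nilpotence of $\beta$, via $\beta(A_2)\subseteq B$ and $\beta^2(A_2)\subseteq\beta(A_2)$) and the dimension count; both are routine, so I expect no real obstacle once the quotient $A/(A_1+B)$ is identified as the right object --- the content of the lemma is essentially the observation that on this quotient $\sigma$ acts trivially.
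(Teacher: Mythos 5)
Your proof is correct and takes essentially the same route as the paper: form the quotient by $A_1+B$ and observe that $\sigma$ becomes the identity there. The paper skips your preliminary step of verifying $\tau$-invariance of $H$ — it simply computes $f(\sigma(x))=f(\sigma_1(x_1)+\sigma_2(x_2))=f(x_2)=f(x)$ directly for the quotient homomorphism $f:A\to A/(A_1+B)$, which already yields both the descent and the fact that the descent is the identity — but the content is the same.
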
 

\begin{proof}[Proof of Lemma~\ref{fibration exists}.]
If $B\ne A_2$, then $\dim(B)<\dim(A_2)$ (note that $A_2$ is connected) and since $A_2\cap A_1$ is finite, we conclude that the algebraic subgroup $C:=A_1+B$ is a proper abelian subvariety of $A$. We let $f:A\lra A/C$ be the quotient map; we claim that $f\circ \sigma = f$. Indeed, for each $x\in A$, we let $x_1\in A_1$ and $x_2\in A_2$ such that $x=x_1+x_2$ and then
$$f(\sigma(x))=f(\sigma(x_1+x_2))=f(\sigma_1(x_1)+\sigma_2(x_2))=f(\sigma_2(x_2))=f(x_2)=f(x).$$
Since $A/C$ is a positive dimensional algebraic group and $f:A\lra A/C$ is the quotient map, then we conclude that $\sigma$ preserves a nonconstant fibration.
\end{proof}

From now on, assume $B=A_2$. We will prove that there exists $x\in A(\Kbar)$ such that $\OO_\sigma(x)$ is Zariski dense in $A$. First we prove there exists $x_2\in A_2(\Kbar)$ such that $\OO_{\sigma_2}(x_2)$ is Zariski dense in $A_2$. 

Because we assumed that the group generated by $\beta(A_2)$ and $y_2$ is Zariski dense in $A$, then  Lemma~\ref{existence of a point 2} yields the existence of $x_2\in A_2(\Kbar)$ such that the group generated by $\beta(x_2)+y_2$ is Zariski dense in $A_2$. Then Corollary~\ref{immediate corollary to many iterates} yields that any infinite subset of $\OO_{\sigma_2}(x_2)$ is Zariski dense in $A_2$. If $A_1$ is trivial, then $A_2=A$ and $\sigma_2=\sigma$ and Theorem~\ref{abelian varieties} is proven. So, from now on, assume that $A_1$ is positive dimensional.

Let $\Gamma$ be the  subgroup of $A(\Kbar)$ generated by all $\phi(x_2)$ and $\phi(y_2)$ as we vary $\phi\in \End(A)$. Then $\Gamma$ is a finitely generated $\End(A)$-module. Using Lemma~\ref{isogenies and m}, we may find $x_1\in A_1(\Kbar)$ with the property that if $\psi\in\End(A)$ has the property that $\psi(x_1)\in \Gamma$, then $A_1\subseteq \ker(\psi)$. Let $x:=x_1+x_2$; we will prove that $\OO_\sigma(x)$ is Zariski dense in $A$.

Let $V$ be the Zariski closure of $\OO_\sigma(x)$. The orbit $\OO_\sigma(x)$ is contained in a finitely generated group (see Fact~\ref{inside a finitely generated group}). Then Fact~\ref{Faltings theorem} yields that $V$ is a finite union of cosets of algebraic subgroups of $A$. So, if $V\ne A$, then there exists a coset $c+C$ of a proper algebraic subgroup $C\subset A$ which contains  $\{\sigma^n(x)\}_{n\in S}$ for some infinite subset $S\subseteq \N$. By Lemma~\ref{subabelian}, there exists a nonzero $\psi\in\End(A)$ such that $\psi(\sigma^n(x))=\psi(c)$ for each $n\in S$, i.e. $\psi$ is constant on the set $\{\sigma^n(x)\colon n\in S\}$. 

Let $ n>m$ be two elements of $S$. Then $\psi(\sigma^n(x)-\sigma^m(x))=0$, and so,
$$\psi(\tau_1^n-\tau_1^m)(x_1)=\psi(\sigma_2^m-\sigma_2^n)(x_2)\in \Gamma.$$
Using the fact that $x_1\in A_1$ was chosen to satisfy the conclusion of   Lemma~\ref{isogenies and m} with respect to $\Gamma$ and the fact that $\tau_1^n-\tau_1^m=\tau_1^m(\tau_1^{n-m}-\id)$ is an isogeny on $A_1$, we obtain that $\psi(A_1)=0$. Thus $\psi$ is constant on $\{\sigma_2^n(x_2)\}_{n\in S}$.  Then Corollary~\ref{immediate corollary to many iterates} yields that $A_2\subseteq \ker(\psi)$. Hence $A_1+A_2=A\subseteq \ker(\psi)$ which contradicts the fact that $\psi\ne 0$. 
This concludes our proof.
\end{proof}

%%%%%%%%%%%%%%%%%%%%%%%%%%%%%%%%%%%%%%%%%%%%%%%%%%%%%%%%%%%%%%%%%%%%%%%%%%%%%%%
%%%%%%%%%%%%%%%%%%%%%%%%%%%%%%%%%%%%%%%%%%%%%%%%%%%%%%%%%%%%%%%%%%%%%%%%%%%%%%%

\section{The general case}
\label{many section}

The proof of Theorem~\ref{general theorem} follows the same strategy as the proof of Theorem~\ref{abelian varieties}.
\begin{proof}[Proof of Theorem~\ref{general theorem}.]
We let  
$\gamma_1,\dots, \gamma_s$ be a set of generators for $S$. We let $S_0$ be the monoid of group 
endomorphisms of $A$ consisting of all $\tau:A\lra A$ such that there exists some $y\in A$ such 
that $T_y\circ \tau\in S$. We let $\cU:=\{\gamma_1,\dots, \gamma_s\}$, and also let $\cU_0$ be a 
finite set of generators for $S_0$ corresponding to the elements in $\cU$ (i.e., for each 
$\varphi\in \cU_0$, there exists $y\in A$ such that $T_y\circ \varphi\in \cU$).

By Fact~\ref{Poincare}, $A$ is isogenuous with a product of simple abelian varieties 
$\prod_i A_i^{r_i}$ and so, $\End(A)$ (the ring of group endomorphisms of $A$) is isomorphic to 
$\prod_i M_{r_i}(\End(A_i))$. We let $R_i:=\End(A_i)$ and $F_i:=\Frac(R_i)$. Then each element 
in $S_0$ is represented by a tuple of matrices in $\prod_i M_{r_i}(R_i)$; from now on, we use freely this 
identification of the group endomorphisms from $S_0$ with tuples of matrices in $\prod_i M_{r_i}(R_i)$. 
Using Lemma~\ref{no root of unity} and also Lemma~\ref{a power suffices 2}, it suffices to 
assume that for each $\tau\in S_0$, and for each positive integer $n$, we have
\begin{equation}
\label{equation no root of unity}
\dim\ker(\tau-\id)=\dim\ker(\tau^n-\id).
\end{equation}
Let $U_0$ be the submonoid of $S_0$ generated by all $\tau\in S_0$ such that 
\begin{equation}
\label{minimal dimension definition}
\max_{n\ge 1}\dim \ker(\tau -\id)^n
\end{equation} 
is minimal as we vary $\tau$ in $S_0$. Then, by Lemma~\ref{minimal eigenspace for 1 linear algebra}, $\bar{U_0}=S_0$. Let $U$ be the submonoid of $S$ corresponding to $U_0$, i.e. the set of all $\sigma\in S$ such that there exists some $\tau\in U_0$ and there exists a translation $T_y$ on $A$ for which $\sigma =  T_y\circ \tau$.   Because $\bar{U_0}=S_0$,  then also $\bar{U}=S$. Using Lemma~\ref{T bar T}, there exists a finitely generated submonoid $U'$ of $U$ (and therefore of $S$) and there exists a positive integer $n$ such that for each $i=1,\dots, s$, we have $\gamma_i^n\in \bar{U'}$. By Lemma~\ref{a power suffices 3}, it suffices to prove Theorem~\ref{general theorem} for $U'$. So, from now on, we assume $U'=S$. In particular, this means that $S_0$ is generated (as a monoid) by finitely many endomorphisms  $\tau$ satisfying \eqref{minimal dimension definition}; we denote this set by $\cU_0$ (as before).  Finally, we recall our notation that  $\cU=\{\gamma_1,\dots,\gamma_s\}$ is a finite set of generators of $ S$, and that for each generator $\tau\in \cU_0$ of $S_0$ there exists some translation $T_y$ and some $i=1,\dots, s$ such that $T_y\circ \tau = \gamma_i$.

Let $\tau_1,\tau_2$ in $\cU_0$. Assume $r_1$ is the order of the root $1$ of the minimal 
polynomial for $\tau_1$, and let $B_2:=\ker(\tau_1-\id)^{r_1}$. Since $\tau_2$ commutes with 
$\tau_1$, we obtain that $\tau_2$ acts on $B_2$. Furthermore, because both $\tau_1$ and $\tau_2$ 
are in $\cU_0$, it must be that the restriction of the action of $\tau_2$ on $B_2$ is also 
unipotent (see also the proof of Lemma~\ref{minimal eigenspace for 1 linear algebra}); otherwise for some positive integer $m$, the element $\tau:=\tau_2^m\tau_1\in S_0$ 
would have the property that $$\max_{n\ge 1}\dim\ker(\tau-\id)^n$$ is smaller than $\dim B_2$ 
(which is minimal among all elements of $S_0$).

We let $B_1$ be a complementary connected algebraic subgroup of $A$ such that $A=B_1+B_2$, and moreover, each element of $S$ induces an endomorphism of $B_1$.  
So, we reduced to the case that each element of $S$ is of the form $T_y\circ \tau$, where $\tau$ acts on $A=B_1+B_2$ as follows:
\begin{itemize}
\item[(i)] $\tau$ restricted to $B_2$ acts unipotently, i.e. there exists some positive integer 
$r_\tau$ such that $(\tau-\id)^{r_\tau}|_{B_2}=0$;
\item[(ii)] for each $\tau\in \cU_0$, the action of $\tau$ on $B_1$ (which by abuse of notation, 
we also denote by $\tau$) has the property that $\tau^n-\id$ is a dominant map for each positive 
integer $n$ (see \eqref{equation no root of unity}).
\end{itemize}

We proceed similarly to the case $S$ is cyclic. Then for each $\sigma_i\in \cU$ (for 
$i=1,\dots, s$), we let $\tau_i\in \cU_0$, $z_i\in B_1$ and $y_i\in B_2$  such that 
$\sigma_i = T_{y_i+z_i}\circ \tau_i$. Note that it may be that $\tau_i=\tau_j$ for some 
$i\ne j$, but this is not relevant for the proof. We let $C_i$ be the algebraic subgroup of 
$B_2$ spanned by $y_i$ and $(\tau_i-\id)(B_2)$ (for each $i=1,\dots, s$). We recall that 
$\beta_i:=(\tau_i-\id)|_{B_2}$ is a nilpotent endomorphism of $B_2$; we let $\cU_1$ be the 
finite set of all $\beta_i$. Finally, we let $C_S$ be the algebraic subgroup of $B_2$ generated 
by all $C_i$.

If the algebraic subgroup $C_S+B_1$  does not equal $A$, then the exact same argument as in 
Lemma~\ref{fibration exists} yields the existence of a non-constant rational map fixed by each 
$\sigma\in S$. Essentially, the projection map $\pi:A\lra A/(B_1+C_S)$ is a non-constant 
morphism with the property that $\pi\circ \sigma = \pi$ for each $\sigma\in S$.

Next assume $C_S+B_1=A$; we will show there exists $x\in A(\Kbar)$ whose orbit under $S$  is 
Zariski dense. The strategy is the same as in the case $S$ is cyclic. We can find algebraic 
points $x_1\in B_1$ and $x_2\in B_2$ such that the $S$-orbit of $x=x_1+x_2$ is 
Zariski dense in  $A$.  
First we choose $x_2\in B_2(\Kbar)$ as in Lemma~\ref{existence of a point 2} with respect to the 
algebraic group endomorphisms $\beta_i$ and the points $y_i$, for $i=1,\dots, s$; hence the Zariski closure of the group generated by $\beta_i(x_2)+y_i$ is $C_i$ for each $i$.

Let $\Gamma$ be the $\End(A)$-module spanned by $x_2, y_1,\dots, y_s, z_1,\dots, z_s$,
which is a finitely generated subgroup of $A(\Kbar)$.  
Then (using Lemma~\ref{isogenies and m}) we choose $x_1\in B_1(\Kbar)$ such that if 
$\psi\in\End(A)$ has the property that $\psi(x_1)\in \Gamma$, then $B_1\subseteq \ker(\psi)$. 
Let $x:=x_1+x_2$; we will prove that $\OO_S(x)$ is Zariski dense in $A$.

Using Facts~\ref{inside a finitely generated group} and \ref{Faltings theorem}, the Zariski closure of $\OO_S(x)$ is a union of finitely many 
cosets $w_j+H_j$ of algebraic subgroups of $A$. 

\begin{lemma}
\label{a good coset}
There exists a coset $w+H$ of an algebraic subgroup appearing as a component of the Zariski closure of $\OO_S(x)$, and there exists a positive integer $N$ such that $w+H$ is  invariant under $\gamma_i^N$ for each $i=1,\dots, s$.
\end{lemma}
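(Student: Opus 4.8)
The plan is to analyze how the monoid $S$ permutes the finitely many cosets $w_j + H_j$ appearing in the Zariski closure $V$ of $\OO_S(x)$, and to deduce that some single coset is stabilized by a finite-index submonoid, hence by a common power of each generator. First I would observe that since $V$ is $S$-invariant (being the Zariski closure of an $S$-orbit), each $\sigma \in S$ maps $V$ into itself; because $\sigma$ is a finite (hence closed) map, $\sigma(V)$ is a closed $S$-dense subset, and by irreducibility of components, $\sigma$ induces a self-map on the finite set $\{w_1+H_1,\dots, w_t+H_t\}$ of top-dimensional components. More precisely, for each $\sigma \in S$ and each component $w_j + H_j$, the image $\sigma(w_j + H_j) = T_{\sigma(w_j)} \circ \tau_\sigma(H_j)$ is again a coset (a translate of the abelian subvariety $\tau_\sigma(H_j)$); since $\OO_S(x)$ is Zariski dense in $V$ and each $\sigma^n(x)$ lands in some component, a counting/pigeonhole argument shows that after passing to a suitable power, the dimension of $\tau_\sigma(H_j)$ must stabilize and in fact $\sigma$ must permute the components of maximal dimension among those actually containing infinitely many points of the orbit.

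The key step is then purely combinatorial: $S$ acts on the finite set $\mathcal{C}$ of relevant maximal-dimension components, where each generator $\gamma_i$ acts as a (not necessarily bijective, but eventually periodic) self-map. Since $\mathcal{C}$ is finite, for each $i$ there is an exponent $m_i$ such that $\gamma_i^{m_i}$ acts on $\mathcal{C}$ with the property that its image is its eventual image and it acts as a permutation thereon; raising further to the order of that permutation, we find $N_i$ with $\gamma_i^{N_i}$ fixing every element of (the eventual image part of) $\mathcal{C}$. Taking $N$ to be a common multiple of the $N_i$ (and using commutativity of $S$ to ensure these stabilizations are compatible), we get that $\gamma_i^N$ fixes each component in the stable part of $\mathcal{C}$ for every $i=1,\dots,s$ simultaneously. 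One must check that the stable part of $\mathcal{C}$ is nonempty: this follows because $\OO_S(x)$ is infinite and Zariski dense in $V$, so at least one maximal component is hit infinitely often, and images of such a component under $S$ again lie among maximal components hit infinitely often.

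Concretely, I would pick $w + H$ to be any component in this nonempty stable set $\mathcal{C}_0$, and $N$ as above; then $\gamma_i^N(w+H) = w' + H'$ for some $w' + H' \in \mathcal{C}_0$, and by the fixing property this coset is $w+H$ itself, giving $\gamma_i^N$-invariance of $w+H$ for each $i$. A subtlety: $\gamma_i^N(w+H)$ being equal to $w+H$ \emph{as a coset} requires that the underlying subvarieties agree, $\tau_{\gamma_i^N}(H) = H$, and the translation part matches; the subvariety equality follows from dimension considerations (no strict inclusions among equi-dimensional components) together with the fact that $\tau_{\gamma_i^N}$ is an isogeny restricted appropriately, while the translation part is forced once we know $\gamma_i^N(w+H) \subseteq w + H$ and both are cosets of the same subvariety.

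The main obstacle I anticipate is making the pigeonhole argument fully rigorous: one needs to control not just that each $\sigma^n(x)$ lands in \emph{some} component, but that the induced combinatorial dynamics on components is well-defined and eventually periodic in a way compatible across all generators simultaneously. The cleanest route is probably to use that $S/S'$-type finiteness (as in Lemma~\ref{a power suffices 2}) is unavailable here since $S$ is infinite, so instead one works directly with the finite set of components: the map "$\sigma \mapsto$ (permutation-like action on $\mathcal{C}$)" is a monoid homomorphism from $S$ to the finite monoid of self-maps of $\mathcal{C}$, whose image is a finite commutative monoid; in any finite monoid some power of each element is idempotent, and for the idempotent $e = \gamma_i^{m_i}$-action the set $e(\mathcal{C})$ is fixed pointwise by a further power — and intersecting the eventual images over all generators, using commutativity, yields the common stable set and the common exponent $N$.
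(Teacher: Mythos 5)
Your proposal follows the same strategy as the paper's proof: pass to the finite set of maximal-dimensional components of the Zariski closure, observe that each dominant $\gamma\in S$ acts on this set (since isogenies preserve dimension, so the image of a top-dimensional coset is again a top-dimensional coset), and then use the finite commutative monoid of induced self-maps together with idempotent powers and commutativity to find a common exponent $N$ and a simultaneously fixed coset. One small remark: your worry about "after passing to a suitable power, the dimension of $\tau_\sigma(H_j)$ must stabilize" is unnecessary, since $\tau_\sigma$ is an isogeny and hence preserves dimension on the nose; the paper also organizes the combinatorics in two cleaner stages (first an $N_0$ fixing the maximal-dimensional subgroups $H$, then an $N_1$ fixing a coset of a single such $H$), but this is only a presentational difference from your one-step argument on all maximal-dimensional components.
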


\begin{proof}
So, we know that the Zariski closure of $\OO_S(x)$ is the union of cosets of (irreducible) algebraic subgroups $\cup_{i=1}^\ell (w_i +H_i)$. Let $\gamma\in S$. Then, using that $\gamma\left(\OO_S(x)\right)\subseteq \OO_S(x)$, we obtain  
$$\cup_{i=1}^\ell \left(\gamma(w_i)+\gamma(H_i)\right) \subseteq \cup_{i=1}^\ell (w_i +H_i).$$
On the other hand, each $\gamma\in S$ is a dominant endomorphism of $A$, and therefore, for each $i=1,\dots, \ell$, we have $\dim(\gamma(H_i))=\dim(H_i)$. So, that means $\gamma$ permutes the subgroups $H_i$ of maximal dimension appearing above. In particular, there exists a positive integer $N_0$ such that for each $i=1,\dots, s$, the endomorphism $\gamma_i^{N_0}$ fixes each algebraic group $H_i$ of maximal dimension. 

%For the sake of simplifying the notation in the proof of this Lemma, we replace each $\gamma_i$ by $\gamma_i^{N_0}$ and then replace $S$ by the corresponding submonoid. Then each element of $S$ fixes each algebraic group $H_i$ of maximal dimension.

Let $S^{(N_0)}$ be the submonoid of $S$ consisting of all $\gamma^{N_0}$ for $\gamma\in S$. 
Now, let $H$ be one such algebraic group of maximal dimension among the algebraic groups $H_i$ (for $i=1,\dots, \ell$). Let $w_i+H$ with $i=1,\dots, k$ be all the cosets of $H$ appearing as irreducible components of the Zariski closure of $\OO_S(x)$. Then each element $\gamma\in S^{(N_0)}$ induces a map $f_\gamma:\{1,\dots, k\}\lra \{1, \dots, k\}$ given by $f_\gamma(w_i+H)=w_{f_\gamma(i)}+H$; the map is not necessarily bijective. Moreover, we get a homomorphism of monoids $f:S^{(N_0)}\lra F_k$ given by $f(\gamma):=f_\gamma$, where $F_k$ is the monoid of all functions from the set $\{1,\dots, k\}$ into itself. Clearly,  there exists $j\in\{1,\dots, k\}$, and there exists a positive integer $N_1$ such that $f_{\gamma^{N_1}}(j)=j$ for each generator $\gamma\in \{\gamma_1^{N_0},\dots, \gamma_s^{N_0}\}$ of $S^{(N_0)}$. Then Lemma~\ref{a good coset} holds with $N:=N_0\cdot N_1$. 
\end{proof}

Let $w+H$ be one coset as in the conclusion of Lemma~\ref{a good coset}, and let $N$ be the positive integer from the conclusion of Lemma~\ref{a good coset} with respect to the coset $w+H$. We let $S'$ be the submonoid of $S$ generated by $\gamma_i^N$ for $i=1,\dots, s$. Then $w+H$ contains a set of the form $\OO_{S'}(x')$, for some $x'\in \OO_S(x)$; in other words, $w+H$ contains a set of the form
$$\{\gamma_1^{m_1+Nn_1}\cdots \gamma_s^{m_s+Nn_s}(x)\text{ : }n_1,\dots, n_s\ge 0\},$$
for some positive integers $m_1,\dots, m_s$.   

Let then $\pi:A\lra A/H$ be the canonical projection. Then $$\pi\left(\gamma_1^{m_1+Nn_1}\cdots \gamma_s^{m_s+Nn_s}(x)\right)=w$$ 
for all $n_1,\dots, n_s\ge 0$. Restricted on $B_1$, for each group endomorphism $\tau_i$ (for $i=1,\dots, s$), the action on the tangent space of $B_1$ has no eigenvalue which is a root of unity (see (ii) above); hence    
$$\psi_1:=\left(\tau_1^{m_1+N}\tau_2^{m_2}\cdots \tau_s^{m_s}-\tau_1^{m_1}\tau_2^{m_2}\cdots \tau_s^{m_s}\right)|_{B_1}\text{ is an isogeny.}$$
So, we get that $(\pi\circ\psi_1)(x_1)\in \Gamma$. Because of our choice for $x_1$ and the fact that $\psi_1$ is an isogeny on $B_1$, we conclude that $B_1\subseteq \ker(\pi)$ (note also that $B_1$ is connected by our assumption). Thus $B_1\subseteq H$. So, we can view $\pi$ as a group homomorphism $\pi:B_2\lra A/H$ with the property that for each $n_1,\dots, n_s\ge 0$ we have
$$\pi\left(\gamma_1^{m_1+n_1N}\cdots \gamma_s^{m_s+n_sN}-\gamma_1^{m_1}\cdots \gamma_s^{m_s}\right)(x_2)=0.$$
Letting $\gamma ':=\gamma_1^{m_1}\cdots \gamma_s^{m_s}|_{B_2}$, we have that $\pi\circ \gamma '$ is constant (equal to $w$) on each orbit $\OO_{\gamma_i^N}(x_2)$. Then Corollary~\ref{immediate corollary to many iterates} yields that the connected component of the Zariski closure $C_i$ of the cyclic group generated by $(\tau_i-\id)(x_2)+y_i$ is contained in the kernel of $\pi\circ \gamma '$. Since the $C_i's$ generate the algebraic group $C_S$ (and therefore the connected components of the $C_i$'s generate the connected component of $C_S$; see also Fact~\ref{connected fact}), and furthermore, the connected component of $C_S$ contains the connected component of $B_2$, we conclude that $\pi\circ \gamma '$ is identically $0$ on $B_2$. Because $\gamma '$ is an isogeny, we conclude that $B_2\subseteq \ker (\pi)$, and therefore $H=A$ since $H$ contains both $B_1$ and $B_2$. This concludes our proof.
\end{proof}

\end{document}